\setlist{itemsep = 3 pt}
\newcommand\customeq{\stackrel{\mathclap{\normalfont\scriptsize\mbox{n}}}{=}}
\newcommand\customleq{\stackrel{\mathclap{\normalfont\scriptsize\mbox{n}}}{\leq}}
\newcommand\customgeq{\stackrel{\mathclap{\normalfont\scriptsize\mbox{n}}}{\geq}}
\newcommand\customleqdeux{\stackrel{\mathclap{\normalfont\scriptsize\mbox{2}}}{\leq}}
\newcommand\customeqtrois{\stackrel{\mathclap{\normalfont\scriptsize\mbox{3}}}{=}}
\newcommand\customgeqtrois{\stackrel{\mathclap{\normalfont\scriptsize\mbox{3}}}{\geq}}
\newcommand\customleqtrois{\stackrel{\mathclap{\normalfont\scriptsize\mbox{3}}}{\leq}}
\newcommand\customleqcinq{\stackrel{\mathclap{\normalfont\scriptsize\mbox{5}}}{\leq}}
\newcommand{\R}{\mathbb{R}}
\newcommand{\C}{\mathbb{C}}
\renewcommand{\P}{\mathbb{P}}
\newcommand{\Z}{\mathbb{Z}}
\newcommand{\N}{\mathbb{N}}
\newcommand{\specialcell}[1]{\ifmeasuring@#1\else\omit$\displaystyle#1$\ignorespaces\fi}
\theoremstyle{definition}
\newtheorem{lem}{Lemma}[section]
\newtheorem{prop}{Proposition}[section]
\newtheorem{defin}{Definition}[section]
\newtheorem{nota}{Notation}[section]
\newtheorem{thm}{Theorem}[section]
\newtheorem*{thm*}{Theorem}
\newtheorem{cor}{Corollary}[section]
\newtheorem{rmk}{Remark}[section]
\let\oldbibliography\thebibliography
\renewcommand{\thebibliography}[1]{\oldbibliography{#1}
\setlength{\itemsep}{-0pt}} 
\title{Real critical points of $T$-polynomials that are sums of squared monomials and topology of $T$-hypersurfaces}
\author{Aloïs Demory}
\date{}
\begin{document}

\maketitle

\tableofcontents

\section{Introduction}

Since its beginning with the foundational works of A. Harnack (\cite{harnack}) and D. Hilbert (\cite{Hilbert1933}) on real algebraic curves in $\R \P^2$, the study of topology of real algebraic hypersurfaces has been divided into two complementary directions: finding restrictions on the topology of real
algebraic hypersurfaces and constructing hypersurfaces that are extremal with respect to these restrictions.

\begin{defin}
    A \textit{real algebraic hypersurface} $A$ of degree $m$ in $\P^n$ is given by the datum of polynomial $P$ of degree $m$ in $n+1$ variables with real coefficient up to multiplication by a non-zero real constant. The \textit{real part} $\R A$ of $A$ is the compactification in $\R \P^n$ of $\{x \in (\R^*)^n \; | \; P(x)=0  \}$, while its \textit{complex part} $\C A$ is the compactification in $\C \P^n$ of $\{z \in (\C^*)^n \; | \; P(z)=0  \}$.  The real algebraic hypersurface $A$ is said to be \textit{non-singular} if $\mathbb{C}A$ is a smooth complex manifold.
\end{defin}
 
Combinatorial patchworking provides a way of constructing non-singular real algebraic hypersurfaces starting from a purely combinatorial datum: a triangulation with integer vertices of a polytope and a sign distribution on the vertices of this triangulation. The hypersurfaces that are obtained using this method are called $T$\textit{-hypersurfaces}. Introduced by O. Viro in the early 1980s (\cite{Vir83}), combinatorial patchworking has proven to be extremely fruitful and allowed for many new results (see \textit{e.g.} \cite{tsurf}, \cite{itenbergviro2007}). In particular, I. Itenberg and Viro have established in \cite{fakeitenbergviro} that the following inequality, which is probably the most important restriction regarding topology of real algebraic varieties, is sharp for hypersurfaces of any degree in projective spaces of any dimension.

\begin{thm*}
    \label{smith}
    \textbf{(Smith-Thom inequality, see \textit{e.g.} \cite{floyd}, \cite{bredon})}
    Let $A$ be an $n$-dimensional real algebraic variety. Then $\sum_{i=0}^{n} dim_{\mathbb{Z}_2}H_i(\mathbb{R}A;\mathbb{Z}_2) \leq \sum_{i=0}^{2n} dim_{\mathbb{Z}_2}H_i(\mathbb{C}A;\mathbb{Z}_2)$.
\end{thm*}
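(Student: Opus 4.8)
The natural route is Smith theory, applied to the involution of $\C A$ given by complex conjugation, whose fixed-point set is the real locus $\R A$. Since $A$ is defined over $\R$, conjugation acts on the compact space $\C A$ by a continuous (indeed algebraic) involution $c$, free on $\C A\setminus\R A$. A compact real algebraic --- hence semialgebraic --- set carrying an algebraic involution admits a $c$-equivariant finite triangulation, so I would treat $\C A$ as a finite $G$-CW complex with $G=\Z/2\Z=\langle c\rangle$ and $\R A$ a $G$-subcomplex. Throughout, (co)homology is with $\mathbb{Z}_2$ coefficients, and the argument takes place in the equivariant cellular chain complex $C_\bullet:=C^{CW}_\bullet(\C A)$, a bounded complex of finitely generated modules over $R:=\mathbb{Z}_2[G]\cong\mathbb{Z}_2[\rho]/(\rho^2)$, where $\rho:=1+c$.

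The heart of the matter is the following. Since $c$ permutes freely the cells not lying in $\R A$, each $C_n$ is, as an $R$-module, the direct sum of the trivial module $C'_n:=C^{CW}_n(\R A)$ and a free module; as $\rho$ annihilates the former and has kernel equal to image on the latter, and as over $\mathbb{Z}_2$ one has $\rho x=0\Leftrightarrow cx=x$, the submodule $C_\bullet^{G}=\ker(\rho\colon C_\bullet\to C_\bullet)$ of invariant chains splits as a direct sum of subcomplexes $C_\bullet^{G}=C'_\bullet\oplus\rho C_\bullet$, where $\rho C_\bullet$ is the image of the chain map $\rho$. The first isomorphism theorem for $\rho$ then yields
\[
0\longrightarrow C_\bullet^{G}\longrightarrow C_\bullet\xrightarrow{\ \rho\ }\rho C_\bullet\longrightarrow 0 ,
\]
and, writing $\overline H_\bullet:=H_\bullet(\rho C_\bullet)$ for the Smith special homology and using $H_\bullet(C_\bullet^{G})=\overline H_\bullet\oplus H_\bullet(\R A)$, the associated long exact sequence is the Smith exact sequence
\[
\cdots\to\overline H_k\oplus H_k(\R A)\xrightarrow{\alpha_k}H_k(\C A)\xrightarrow{\beta_k}\overline H_k\xrightarrow{\partial_k}\overline H_{k-1}\oplus H_{k-1}(\R A)\to\cdots .
\]
Exactness at the three kinds of term gives $\operatorname{rk}\alpha_k+\operatorname{rk}\beta_k=\dim H_k(\C A)$, $\operatorname{rk}\beta_k+\operatorname{rk}\partial_k=\dim\overline H_k$, and $\operatorname{rk}\partial_{k+1}+\operatorname{rk}\alpha_k=\dim\overline H_k+\dim H_k(\R A)$; solving for $\dim H_k(\R A)$, summing over $k$ and telescoping the $\partial$-terms, I would obtain
\[
\sum_k\dim H_k(\R A)=\sum_k\dim H_k(\C A)-2\sum_k\operatorname{rk}\beta_k .
\]
As $\operatorname{rk}\beta_k\ge 0$ this gives $\sum_i\dim_{\mathbb{Z}_2}H_i(\R A;\mathbb{Z}_2)\le\sum_i\dim_{\mathbb{Z}_2}H_i(\C A;\mathbb{Z}_2)$ (with the defect even, the usual refinement); homology and cohomology have equal dimensions over a field, and since $\R A$, resp.\ $\C A$, has real dimension $n$, resp.\ $2n$, the index ranges are those of the statement.

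The only genuinely non-formal ingredient --- hence the main obstacle --- is the geometric input used at the outset: $\C A$ may be badly singular, so one cannot appeal to a smooth or manifold structure and must instead equip the real algebraic set $\C A$ with an honest $G$-equivariant cell structure, which is exactly what forces $C_\bullet$ to be a sum of $C'_\bullet$ with a free $R$-module and thus drives the whole mechanism; everything afterwards is the formal homological algebra above. One may bypass the triangulation by quoting Smith theory for finitistic spaces directly, or by running the Borel-construction argument instead --- the mod-$2$ localization theorem identifies $H^{*}_{G}(\C A)[t^{-1}]$ with $H^{*}(\R A)\otimes\mathbb{Z}_2[t,t^{-1}]$, while the Serre spectral sequence of $\C A\to(\C A)_{G}\to BG$ bounds the generic $\mathbb{Z}_2[t]$-rank of $H^{*}_{G}(\C A)$ by $\dim_{\mathbb{Z}_2}H^{*}(\C A)$, and comparing the two gives the inequality --- but this variant only relocates the same compactness and finite-dimensionality checks.
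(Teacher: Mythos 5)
Correct. The paper does not prove this statement at all --- it is quoted as a classical result with references to Floyd and Bredon --- and your argument is precisely the standard Smith-theory proof contained in those references (equivariant triangulation of $\C A$ for the conjugation involution, the short exact sequence $0\to\ker\rho\to C_\bullet\to\rho C_\bullet\to 0$ with $\ker\rho=\rho C_\bullet\oplus C_\bullet(\R A)$ over $\Z_2[\Z/2]$, and rank-counting in the resulting long exact sequence), so there is nothing to compare beyond noting that your write-up is sound, including the equivariant-triangulability point needed when $\C A$ is singular.
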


Most of the constructions that have been carried out so far using combinatorial patchworking take as starting datum a \textit{primitive} triangulation, that is to say a triangulation whose top-dimensional simplices all have the smallest possible Euclidean volume. It turns out that the real parts of the real algebraic hypersurfaces that can be obtained this way have very specific topological properties, such as the following ones. Here and in the rest of the text, given a topological manifold $X$, we denote by $\chi(X)$ its Euler characteristic, by $\sigma(X)$ its signature when it is well-defined and by $b_i(X)$ the number $\mathrm{dim}_{\Z_2} H_i(X;\Z_2)$ for any non-negative integer $i$.

\begin{thm*}
\label{thmbertrand}
\textbf{(\cite{bertrand_2010})} If $A$ is an even-dimensional real algebraic hypersurface obtained by primitive combinatorial patchworking in a non-singular toric variety, then $\chi(\mathbb{R}A) = \sigma(\mathbb{C}A)$.
\end{thm*}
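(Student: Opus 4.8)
The plan is to compute $\chi(\R A)$ and $\sigma(\C A)$ separately as explicit combinatorial invariants of the Newton polytope $\Delta$ of $A$, and then to check that the two resulting expressions agree. Throughout, we use that $\R A$ is homeomorphic to the primitive $T$-hypersurface built from $\Delta$, a primitive triangulation $\tau$ and a sign distribution $\epsilon$, sitting inside the real toric variety $\R X_\Delta$, and that we may take $\C A$ to be any $\Delta$-nondegenerate complex hypersurface with Newton polytope $\Delta$, so that its Hodge numbers are the combinatorial ones given by the Danilov--Khovanskii formulas. The hypothesis that $A$ is even-dimensional is exactly what makes $\sigma(\C A)$ well defined, $\dim_{\R}\C A$ being then divisible by $4$.

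First I would treat the real side. As $\R A$ is a compact semialgebraic set, $\chi(\R A)=\chi_c(\R A)$, and the Euler characteristic with compact supports is additive over the stratification of $\R A$ by the open simplices of the symmetrised triangulation $\tau^{*}$ of the union $\Delta^{*}$ of the $2^{n}$ reflected copies $s_\xi(\Delta)$, $\xi\in(\Z/2)^{n}$, of $\Delta$. The local model is elementary: inside a primitive simplex $\sigma$ carrying $a$ positive and $b$ negative vertices (so $a+b=\dim\sigma+1$), the piece of the $T$-hypersurface is empty when $ab=0$, and is otherwise the convex hull of the $ab$ midpoints of the sign-changing edges, which is combinatorially a product of simplices $\Delta^{a-1}\times\Delta^{b-1}$, hence an open $(\dim\sigma-1)$-ball after its boundary is removed; its contribution to $\chi_c$ is thus $0$ or $(-1)^{\dim\sigma-1}$. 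The crucial observation is that, for a fixed $k$-simplex $\sigma$ of $\tau$, the number of reflected copies $s_\xi(\sigma)$ that are \emph{not} monochromatic is independent of $\epsilon$: monochromaticity of $s_\xi(\sigma)$ is a system of $k$ affine equations over $\mathbb{F}_{2}$ in $\xi$ whose linear part is given by the difference vectors of the vertices of $\sigma$, and primitivity of $\sigma$ makes these vectors $\mathbb{F}_{2}$-linearly independent, so the system has exactly $2^{\,n-k}$ solutions whatever $\epsilon$ is, leaving $2^{n}-2^{\,n-k}$ non-monochromatic copies. Summing the local contributions, and organising the faces of $\Delta$ along the toric stratification $\R X_\Delta=\bigsqcup_{\Gamma\preceq\Delta}(\R^{*})^{\dim\Gamma}$, yields a closed formula expressing $\chi(\R A)$ as an explicit linear combination of the $f$-vectors of the triangulations induced by $\tau$ on the faces $\Gamma$ of $\Delta$. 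Since $\tau$ is unimodular, each such $f$-vector is determined by the Ehrhart ($h^{*}$-)data of $\Gamma$, so $\chi(\R A)$ depends neither on $\tau$ nor on $\epsilon$, only on $X_\Delta$.

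On the complex side, $\sigma(\C A)$ is the value at $y=1$ of the Hirzebruch $\chi_y$-genus, i.e. $\sigma(\C A)=\sum_{p,q}(-1)^{q}h^{p,q}(\C A)$, and the Danilov--Khovanskii expression of the $h^{p,q}$ as sums over the faces $\Gamma\preceq\Delta$ of "primitive" Hodge numbers built from the volume polynomials of the $\Gamma$'s turns $\sigma(\C A)$ into another explicit function of the collection $\{h^{*}(\Gamma)\}_{\Gamma\preceq\Delta}$, again of the shape $\sum_{\Gamma\preceq\Delta}(\text{local term depending only on }\Gamma)$.

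It then remains to match the two combinatorial expressions. Either I would compare them directly as polynomials in the Ehrhart data of $\Delta$ and its faces, or --- more conceptually --- I would observe that both $\Delta\mapsto\chi(\R A)$ and $\Delta\mapsto\sigma(\C A)$ define valuations on lattice polytopes admitting unimodular triangulations, additivity under unimodular subdivision being checkable from the sum-over-faces shape of the two formulas, so that it suffices to compare them on unimodular simplices and their faces, hence on standard $k$-simplices. When $\Delta$ is a standard $k$-simplex, $\C A$ is a hyperplane $\C\P^{k-1}$ and $\R A=\R\P^{k-1}$, and the identity to check is $\sigma(\C\P^{k-1})=\chi(\R\P^{k-1})$, both sides being $1$ if $k-1$ is even and $0$ otherwise. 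I expect this last matching --- the purely combinatorial identity between the Euler characteristic of the primitive real $T$-hypersurface and the Danilov--Khovanskii signature, together with the verification that both quantities are valuations --- to be the main technical obstacle. A more conceptual alternative worth trying is to study the complex conjugation involution $c$ on $\C A$, whose fixed locus is $\R A$ and which preserves the orientation of $\C A$ precisely because $\dim_{\C}\C A$ is even, through the Atiyah--Singer $G$-signature theorem; turning this into a proof of $\chi(\R A)=\sigma(\C A)$ on the nose would require controlling in addition the action of $c$ on the middle cohomology of $\C A$, which for a $T$-hypersurface can in principle be recovered from the patchwork.
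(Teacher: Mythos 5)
First, a point of order: the paper you are annotating does not prove this statement at all — it is quoted as background from \cite{bertrand_2010} — so your proposal can only be judged against the published argument, which it in fact parallels in outline. The real-side half of your plan is sound and is where primitivity genuinely enters: stratifying the patchworked hypersurface by the open simplices of $\tau^*$, each non-monochromatic copy of a $k$-simplex contributes an open $(k-1)$-cell, hence $(-1)^{k-1}$ to $\chi_c$; and your $\mathbb{F}_2$-argument is correct — for a face of a primitive simplex the mod-$2$ edge vectors are independent, so monochromaticity is an affine system of full rank $k$ and exactly $2^{n-k}$ of the $2^n$ copies are monochromatic, whatever the sign distribution is (with the expected adjustment $2^{\dim\Gamma-k}$ out of $2^{\dim\Gamma}$ for simplices interior to a face $\Gamma$, to account for the toric strata). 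Together with the fact that the $f$-vectors of a unimodular triangulation of $\Delta$ and of its faces are determined by Ehrhart data, this does show that $\chi(\R A)$ depends only on the polytope, and the complex side via $\sigma(\C A)=\sum_{p,q}(-1)^q h^{p,q}(\C A)$ and Danilov--Khovanskii is the right frame.

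The genuine gap is that the theorem \emph{is} the matching step you defer: the identity between the lattice-point expression you obtain for $\chi(\R A)$ and the Danilov--Khovanskii expression for $\sigma(\C A)$ is never established, and the shortcut you propose does not stand as stated. Neither $\Delta\mapsto\chi(\R A)$ nor $\Delta\mapsto\sigma(\C A)$ is shown to be a valuation, and it is not even clear what either quantity means on the pieces of a subdivision: a sub-polytope or a lower-dimensional cell of a unimodular triangulation of $\Delta$ is not the Newton polytope of a hypersurface in the same toric variety, the signature is not additive under cutting a variety, and additivity of the combinatorial formulas under unimodular subdivision is exactly the kind of nontrivial identity you would need to prove — at which point you might as well compare the two closed formulas directly, which is what the cited proof does (and which requires a genuine computation with $h^*$-type data, not just the check $\sigma(\C\P^{n-1})=\chi(\R\P^{n-1})$ on the standard simplex). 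The Atiyah--Singer alternative you mention is likewise only heuristic: the $G$-signature theorem relates $\sigma(\C A)$ to the self-intersection of $\R A$ and to the trace of conjugation on middle cohomology, and without controlling that trace (which in general differs from what would force equality — indeed $\chi(\R A)=\sigma(\C A)$ fails for non-primitive patchworks, as the paper's Proposition \ref{prop_big_b1} shows) no identity "on the nose" follows. So the proposal is a reasonable strategy whose decisive combinatorial identity, acknowledged by you as the main obstacle, is missing.
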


\begin{thm*}
\textbf{(\cite{renaudineau})} If $A$ is a real algebraic hypersurface in $\mathbb{P}^n$ obtained by primitive combinatorial patchorking in $\mathbb{P}^n$, then
\begin{itemize}
    \item if $n-1$ is even, then $b_{\frac{n-1}{2}}(\mathbb{R}A) \leq h^{\frac{n-1}{2},\frac{n-1}{2}}(\mathbb{C}A)$;
    \item for every integer $i$ such that $0\leq i \leq n-1$ and $i \neq \frac{n-1}{2}$, one has $b_i(\mathbb{R}A) \leq h^{i,n-1-i}(\mathbb{C}A)+1$.
\end{itemize}
\end{thm*}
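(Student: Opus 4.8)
The plan is to deduce the statement from the Renaudineau--Shaw estimate for the $\mathbb{Z}_2$-Betti numbers of a primitive $T$-hypersurface, combined with the tropical computation of Hodge numbers. Write $\mathbb{R}A=\mathbb{R}X$ for the hypersurface produced from a convex primitive triangulation $\tau$ of the dilated simplex $m\Delta_n$ together with a sign distribution on its vertices, and let $V\subset \mathbb{TP}^n$ be the tropical hypersurface dual to $\tau$; primitivity of $\tau$ is precisely what makes $V$ a smooth projective tropical variety. The input I would use is that near the tropical limit $H_*(\mathbb{R}X;\mathbb{Z}_2)$ carries a filtration — obtained from a cellular model of the real patchwork and a Kalinin-type spectral sequence built from the real phase structure on $V$ — whose first page is the $\mathbb{Z}_2$-coefficient tropical homology of $V$. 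Concretely, there is a spectral sequence abutting to $H_*(\mathbb{R}X;\mathbb{Z}_2)$ with $E_1^{p,q}=\mathcal{H}_q(V;\mathcal{F}_p;\mathbb{Z}_2)$, where $\mathcal{F}_p$ is the $p$-th tropical coefficient system, and therefore
$$b_q(\mathbb{R}X;\mathbb{Z}_2)\ \le\ \sum_{p=0}^{n-1}\dim_{\mathbb{Z}_2}\mathcal{H}_q(V;\mathcal{F}_p;\mathbb{Z}_2)\qquad(0\le q\le n-1).$$
Note that only the $E_1$-page is needed for this inequality, since $H_q(\mathbb{R}X;\mathbb{Z}_2)$ is filtered with graded pieces the subquotients $E_\infty^{p,q}\subseteq E_1^{p,q}$; in particular no analysis of the differentials of the spectral sequence is required in the case of $\mathbb{P}^n$.

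Next I would identify the right-hand side with Hodge numbers. By the theorem of Itenberg--Katzarkov--Mikhalkin--Zharkov, the rational tropical homology of the smooth projective tropical variety $V$ computes the Hodge numbers of the corresponding complex hypersurface: $\dim_{\mathbb{Q}}\mathcal{H}_q(V;\mathcal{F}_p;\mathbb{Q})=h^{p,q}(\mathbb{C}A)$, these depending only on $n$ and the degree $m$. To pass from $\mathbb{Q}$- to $\mathbb{Z}_2$-coefficients I would use that the integral tropical homology of a smooth projective tropical hypersurface has no $2$-torsion — extracted from the explicit description of the stalks of the $\mathcal{F}_p$ along the strata of $V$ — so that by the universal coefficient theorem $\dim_{\mathbb{Z}_2}\mathcal{H}_q(V;\mathcal{F}_p;\mathbb{Z}_2)=h^{p,q}(\mathbb{C}A)$. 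Combined with the previous display this gives $b_q(\mathbb{R}A;\mathbb{Z}_2)\le \sum_{p=0}^{n-1}h^{p,q}(\mathbb{C}A)$.

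It remains to evaluate $\sum_p h^{p,q}(\mathbb{C}A)$ for a smooth hypersurface of dimension $d=n-1$ in $\mathbb{P}^n$. By the Lefschetz hyperplane theorem and Poincaré duality the Hodge diamond of $\mathbb{C}A$ is concentrated on the diagonal away from the middle row: $h^{p,q}(\mathbb{C}A)=0$ whenever $p+q\neq d$ and $p\neq q$, and $h^{p,p}(\mathbb{C}A)=1$ for every $p\neq d/2$. Hence, for a fixed $q$ with $2q\neq d$, the only nonzero summands are $p=q$, contributing $h^{q,q}(\mathbb{C}A)=1$, and $p=d-q$, contributing $h^{d-q,q}(\mathbb{C}A)=h^{q,d-q}(\mathbb{C}A)$ by Hodge symmetry; these two indices are distinct, so $b_q(\mathbb{R}A;\mathbb{Z}_2)\le 1+h^{q,n-1-q}(\mathbb{C}A)$. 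If $d$ is even and $q=d/2$, the indices $p=q$ and $p=d-q$ coincide, so the sum collapses to the single term $h^{d/2,d/2}(\mathbb{C}A)$, and $b_{(n-1)/2}(\mathbb{R}A;\mathbb{Z}_2)\le h^{(n-1)/2,(n-1)/2}(\mathbb{C}A)$. This is exactly the asserted dichotomy.

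The serious work is entirely in the first step: producing the filtration on $H_*(\mathbb{R}X;\mathbb{Z}_2)$ and identifying its graded pieces with tropical homology. This requires a cellular model of the real patchwork near the tropical limit compatible with the triangulation $\tau$, the sign rule and complex conjugation, and one must check that primitivity of $\tau$ forces all local pictures to be standard, including along the toric boundary $\mathbb{TP}^n\setminus\mathbb{R}^n$, so that the comparison behaves well at infinity. The other delicate point, needed for sharpness rather than for a mere bound, is the absence of $2$-torsion in the integral tropical homology of $V$: any such torsion would enlarge $\dim_{\mathbb{Z}_2}\mathcal{H}_q(V;\mathcal{F}_p;\mathbb{Z}_2)$ beyond $h^{p,q}(\mathbb{C}A)$ and would weaken the conclusion by the corresponding amount.
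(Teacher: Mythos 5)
This statement is not proven in the paper: it is quoted verbatim from \cite{renaudineau} (Renaudineau--Shaw), so there is no internal argument to compare against. Your sketch is essentially the argument of that cited reference: a filtration on $H_*(\mathbb{R}A;\mathbb{Z}_2)$ near the tropical limit whose graded pieces inject into the $\mathbb{Z}_2$-tropical homology of the smooth tropical hypersurface dual to the primitive convex triangulation, the identification $\dim_{\mathbb{Z}_2}\mathcal{H}_q(V;\mathcal{F}_p;\mathbb{Z}_2)=h^{p,q}(\mathbb{C}A)$ via Itenberg--Katzarkov--Mikhalkin--Zharkov together with absence of $2$-torsion, and then the Hodge diamond of a smooth hypersurface in $\mathbb{P}^n$, whose bookkeeping you carry out correctly to obtain the middle-index case versus the $+1$ in the other cases. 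The two genuinely hard ingredients (the spectral sequence/filtration and the torsion-freeness statement) are precisely the content of the cited work, which you invoke rather than prove, as you yourself acknowledge.
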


On the other hand, very little is known about the topology of real algebraic hypersurfaces obtained via non-primitive patchworking, especially in high dimensions. In the case of surfaces in $\P^3$, Itenberg and E. Shustin were able to prove the following bounds.

\begin{thm*}
    \textbf{(\cite{itenbergshustincritpts})} For any real algebraic surface $A$ of degree $m$ in $\P^3$ obtained by combinatorial patchworking, one has
    \begin{equation*}
        b_0(\R A) \customleqtrois \frac{16}{39}m^3 \; \mathrm{and} \; b_1(\R A) \customleqtrois \frac{7}{9}m^3.
    \end{equation*}
\end{thm*}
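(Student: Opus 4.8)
The plan is to bound the Betti numbers of $\R A$ by counting critical points of a generic affine projection restricted to $\R A$, and then to estimate these numbers of critical points combinatorially from the triangulation $\tau$ of $m\Delta^3$ underlying the patchworking. Recall first the combinatorial model of $\R A$: the triangulation $\tau$ of $m\Delta^3$, reflected across the three coordinate hyperplanes and across the hyperplane at infinity and quotiented by the antipodal involution, gives a PL model of $\R\P^3$ built out of reflected copies of $m\Delta^3$, whose total lattice volume is of order $m^3$ (with boundary contributions of lower order). Inside this model $\R A$ is represented by a PL surface $\Sigma$ transverse to every simplex: in a tetrahedron $\sigma$ the set $\Sigma\cap\sigma$ is empty if the four vertices carry the same sign, is a triangle if one vertex is separated from the other three, and is a quadrilateral if the signs split $2$–$2$; in particular the vertices of $\Sigma$ are exactly the points where $\Sigma$ meets the open edges $[a,b]$ of the model with $\varepsilon(a)\neq\varepsilon(b)$. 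Choose a generic affine function $f\colon \R\P^3\dashrightarrow\R$ whose restriction to $\Sigma$ is a PL Morse function with all critical points at vertices of $\Sigma$, and write $c_0,c_1,c_2$ for the numbers of local minima, of saddles counted with multiplicity (a monkey saddle with $2k$ sectors counting for $k-1$), and of local maxima. Since $\R A$ is a disjoint union of closed surfaces, every component contributes at least one minimum and one maximum, so $b_0(\R A)\le\min(c_0,c_2)$; and over $\Z_2$ one has $\chi(\R A)=2b_0(\R A)-b_1(\R A)$ (each closed connected surface has $b_2=1$ mod $2$), while the Morse identity gives $\chi(\R A)=c_0-c_1+c_2$, hence $b_1(\R A)=c_1-(c_0-b_0)-(c_2-b_0)\le c_1$.

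The heart of the argument is the combinatorial estimate of $c_0$, $c_2$ and $c_1$. The type of critical point of $f$ at a vertex $w\in\Sigma$ lying on an edge $e=[a,b]$ is governed by the way the link of $e$ in $\tau$, a cyclically ordered fan of tetrahedra around $e$, is split by $f$ into ascending and descending sectors, together with the signs $\varepsilon$ on the vertices of that fan. I would enumerate the local sign-and-position patterns that can create a minimum (respectively a maximum, respectively a saddle of each multiplicity), assign to each such pattern a \emph{cost} measured by the lattice volume of the tetrahedra it occupies (each at least $1/6$) together with the lattice points it uses up, and sum these costs over all critical points. Because distinct patterns compete for the same finite lattice volume of order $m^3$, the estimate reduces to a finite extremal (linear-programming) problem: maximize the number of minimum-type patterns, and separately of saddle-type patterns with the correct multiplicity weights, per unit of lattice volume, subject to the local incompatibility constraints. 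Solving these two optimizations is exactly what produces the coefficients $\tfrac{16}{39}$ and $\tfrac{7}{9}$.

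The main obstacle, in my view, is this extremal analysis: identifying the densest admissible local configurations of $\Sigma$ and proving that they cannot be packed more tightly, which is where the somewhat unusual denominators $39$ and $9$ must come from. A second, more technical difficulty is the bookkeeping along the reflecting hyperplanes — where tetrahedra of the model are identified in pairs or quadruples and where $\R A$ meets the coordinate hyperplanes or infinity — so that the naive "volume of $m\Delta^3$ times the number of copies" count is only correct up to $o(m^3)$ terms; these must be shown to be absorbed, either because the relevant inequalities are strict at the extremal patterns or by a direct refinement of the boundary estimate.

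Finally I would assemble the pieces. The first optimization yields $c_0\le\tfrac{16}{39}m^3+o(m^3)$; applying the same bound to $-f$ (equivalently, to a triangulation of $m\Delta^3$ reflected in the direction of $f$, which is again a $T$-surface of degree $m$) gives $c_2\le\tfrac{16}{39}m^3+o(m^3)$, whence $b_0(\R A)\le\tfrac{16}{39}m^3$ after absorbing the error term. The second optimization gives $c_1\le\tfrac{7}{9}m^3+o(m^3)$, hence $b_1(\R A)\le\tfrac{7}{9}m^3$. A separate direct check disposes of small values of $m$, for which the asymptotic estimates of the boundary contributions are too crude.
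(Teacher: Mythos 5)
There is a genuine gap, and it sits exactly where the theorem's content lies. Your reduction steps are fine (a generic PL height function on the patchworked surface $\Sigma$ has its critical points at vertices of $\Sigma$, $b_0(\R A)\le\min(c_0,c_2)$, and $b_1(\R A)\le c_1$ via $\chi=c_0-c_1+c_2=2b_0-b_1$), but everything after that is a plan, not a proof: you do not enumerate the admissible local sign-and-fan patterns, you do not define or compute the ``costs'', you do not set up, let alone solve, the extremal problem — you simply assert that its optimum is $\tfrac{16}{39}$ per unit volume for minima and $\tfrac{7}{9}$ for saddles because those are the constants in the statement. Reverse-engineering the answer into an unspecified linear program is not an argument, and there is no evidence that a per-unit-volume density bound for critical points of a height function restricted to $\Sigma$ even has these optima: nothing in your setup produces the denominator $39$, and the count of edges of $\tau$ with endpoints of opposite signs (hence of vertices of $\Sigma$) only gives bounds of the much weaker form coming from $l^*(2\Delta_m^3)-l^*(\Delta_m^3)$. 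The acknowledged boundary/identification bookkeeping is a lesser issue, but it is also left unresolved.

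For comparison, note that this theorem is not proved in the present paper at all — it is imported from \cite{itenbergshustincritpts} — and the mechanism actually used there (and recalled in Section \ref{section_real_crit_pts}) is different from yours: one studies the critical points of the defining $T$-polynomial itself, twisted to $x^{-O}P$ for a point $O$ generic with respect to $\tau(P)$, so that Theorem \ref{thmitenbergshustin} identifies the real critical points in $(\R^*)^3$ with the real $O$-critical symmetric copies of the $3$-simplices of $\tau(P)$, with Morse index read off from the $O$-index; the Morse inequalities are then applied in the form $b_i(\R A)\customleq\min(c_i^-+c_{n-i}^+,\,c_{n-i-1}^-+c_{i+1}^+)$, and the constants $\tfrac{16}{39}$ and $\tfrac{7}{9}$ come out of a global combinatorial analysis of how many simplices of each $O$-index a triangulation of $\Delta_m^3$ can contain and which of them can simultaneously admit real critical copies. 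If you want to complete a proof along your height-function route, you would have to carry out the pattern enumeration and packing bound in full and show it reproduces these constants; as written, the heart of the theorem is missing.
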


Here and in the rest of the paper, $X \customleq Y$ (respectively, $X \customgeq Y$) means that $X \leq Y + R(m)$ (respectively, $X \geq Y + R(m)$), where $R \in \R[m]$ is a polynomial of degree at most $n-1$ whose coefficients do not depend on $m$. We write $X \customeq Y$ if $X \customleq Y$ and $X \customgeq Y$.

The present work constitutes an attempt at understanding non-primitive combinatorial patchworking by focusing on hypersurfaces that are obtained by using combinatorial patchworking starting with triangulations that are dilations by $2$ of other triangulations. Such hypersurfaces will be called $T^2$\textit{-hypersurfaces}. The real part of any $T^2$-hypersurface in $\P^n$ is symmetric with respect to all coordinate hyperplanes. Hence, in a sense, all the topological information about the real part of the hypersurface is confined to the positive orthant, which allows for a simpler study. Moreover, the study of such hypersurfaces seems to be relevant for the applications: for example, hypersurfaces in the positive orthant that are obtained by combinatorial patchworking have recently be proven to appear as decision boundaries of binary classification neural networks (see \cite{machinelearningduportugal}).

In this paper, we use the tools developed by Itenberg and Shustin in \cite{itenbergshustincritpts} to study the critical points of polynomials defining patchworked hypersurfaces and to prove the following results.

\begin{thm*}
    \textbf{\ref{thm_partial_sums_betti_numbers}}
    Let $A$ be a $T^2$-hypersurface of degree $m$ in $\P^n$.
    For any positive integer $k \leq n-1$, one has
    \begin{equation*}
    \sum_{i=0}^{k-1} \frac{(k+n-1-i)!}{(k-1-i)!n!} b_i(\R A) \customleq \sum_{i=0}^{k-1} \frac{(k+n-1-i)!}{(k-1-i)!n!} h^{i,n-1-i}(\C A).\end{equation*}
    
    Additionally, one has $b_1(\R A) \customleq h^{1,n-2}(\C A)$.

    Moreover, if $n \leq 6$, then for any non-negative integer $k\leq n-1$, one has
    \begin{equation*}
        \sum_{i=0}^k { n-i \choose n-k } b_i(\R A) \customleq \sum_{i=0}^k { n-i \choose n-k } h^{i,n-1-i}(\C A).
    \end{equation*}
\end{thm*}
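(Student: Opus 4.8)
The plan is to transfer the topological problem to a combinatorial one about the primitive triangulation $T$ underlying the $T^2$-hypersurface $A$, and then to reduce the combinatorial problem to an estimate on the number of critical points of a suitable Morse function, following the strategy of Itenberg and Shustin. First I would recall that since $A$ is a $T^2$-hypersurface, its real part $\R A$ is invariant under all the reflections in the coordinate hyperplanes, so $\R A$ is the union of $2^n$ symmetric copies of the piece $\R A_+$ living in the closed positive orthant; consequently the Betti numbers $b_i(\R A)$ are controlled, up to lower-order terms in $m$, by the Betti numbers of $\R A_+$, which in turn are computed from the combinatorics of $T$ via the patchworking complex. The point of working in the positive orthant is that the sign data become trivial there, so $\R A_+$ is homeomorphic to a piecewise-linear hypersurface dual to $T$, and its homology is read off from a chain complex supported on the simplices of $T$.

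Next I would introduce, as in \cite{itenbergshustincritpts}, a generic linear (or affine) function $\ell$ on the relevant region and use it as a Morse function on $\R A_+$; the key inequality is that $b_i(\R A_+)$ is bounded above by the number of critical points of $\ell|_{\R A_+}$ of index $i$, and these critical points correspond to certain local configurations in the triangulation $T$ (a simplex of $T$ together with a compatible orientation pattern of its edges with respect to $\ell$). The combinatorial heart of the argument is then a counting statement: the number of index-$i$ critical configurations in a primitive triangulation of an $n$-dimensional lattice polytope of "size" $m$ is, up to $O(m^{n-1})$, bounded by a linear combination of the Hodge numbers $h^{i,n-1-i}(\C A)$, which themselves equal (again up to lower order) the number of interior lattice points of the relevant dilated polytope slices — this is the bridge between the tropical/combinatorial count and the Hodge-theoretic quantities, and it is where the stated weights $\frac{(k+n-1-i)!}{(k-1-i)!n!}$ and $\binom{n-i}{n-k}$ will appear, as the coefficients that make the alternating/partial-sum telescoping work.

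For the first displayed family of inequalities I expect to run a "partial Smith-type" argument: rather than bounding each $b_i$ individually, I would bound the weighted partial sum $\sum_{i=0}^{k-1} w_i^{(k)} b_i(\R A)$ by grouping critical points so that the weights $w_i^{(k)} = \frac{(k+n-1-i)!}{(k-1-i)!n!}$ absorb the contributions of simplices of each dimension exactly once; the combinatorial identity underlying this is a standard binomial summation, and the geometric input is that a primitive $n$-simplex contributes to the $\ell$-critical count in a way that, summed with these weights, matches the Ehrhart-type growth of $\sum h^{i,n-1-i}$. The statement $b_1(\R A) \customleq h^{1,n-2}(\C A)$ would be obtained as the refinement of the $k=2$ case once one checks that the $b_0$ term can be discarded (it is absorbed into $R(m)$, or handled separately using connectedness counts). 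The second family, valid only for $n \le 6$, with weights $\binom{n-i}{n-k}$, should come from a sharper local analysis of critical configurations in dimension at most $6$, where one can enumerate the finitely many combinatorial types of a primitive simplex-with-orientation and verify the stronger inequality case by case; this dimension restriction strongly suggests that the cleaner bound fails, or at least is not provable by this method, once the number of local types explodes.

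The main obstacle I anticipate is the combinatorial counting step — establishing that the weighted number of index-$i$ critical points of a generic linear function on the dual hypersurface of an arbitrary primitive triangulation is bounded (up to $O(m^{n-1})$) by the corresponding weighted sum of $h^{i,n-1-i}$, uniformly over all triangulations. Unlike the primitive-patchworking results quoted above, here there is no single clean bijection; one must argue that any "excess" of low-index critical points forces a compensating deficit elsewhere, which is exactly the kind of statement whose proof in \cite{itenbergshustincritpts} is a delicate double-counting over faces of $T$. Making that argument work in general dimension (hence the first, weaker family being all that is available for $n > 6$, while the sharper binomial-weighted family needs the bounded number of local models when $n \le 6$) is where the real work will lie, and I would expect to spend most of the proof setting up the bookkeeping that tracks simplices of each dimension against the Ehrhart polynomial of the Newton polytope.
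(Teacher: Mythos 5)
Your plan rests on a misreading of what a $T^2$-hypersurface is and, as a result, never supplies the step that actually carries the proof. The triangulation $\tau(P)$ of $\Delta_m^n$ is the dilation by $2$ of an \emph{arbitrary} convex lattice triangulation of $\Delta_{m/2}^n$ --- primitivity is not assumed (if it were, the much stronger results quoted in Section \ref{section_sibling_hypersurf} would already apply), so your central counting claim ``index-$i$ critical configurations in a primitive triangulation are bounded by Hodge numbers'' addresses the wrong object. What the paper actually does is elementary once the right quantities are compared: (i) since each interior $i$-simplex of the half-size triangulation, dilated by $k$, contains at least $\binom{k-1}{i}$ interior lattice points (Lemma \ref{lem_maximal_simplices}), one gets $l^*(k\Delta_m^n)\customeq 2^n l^*(k\Delta_{m/2}^n)\geq 2^n\sum_i\binom{k-1}{i}s_i(\tau(P))$; (ii) the ``simplex vision'' count (Proposition \ref{prop_borne_indice_general}) gives $s_i(\tau)\customeq\sum_j\binom{n-j}{n-i}\mathcal{S}^O_{n-j}(\tau(P))$, where $\mathcal{S}^O_j$ counts $n$-simplices of $O$-index $j$ for a generic lattice point $O$; (iii) the Itenberg--Shustin correspondence (Theorem \ref{thmitenbergshustin}) together with the Morse inequalities for the function $x^{-O}P$ on $(\R^*)^n$ --- not a linear height function on the orthant piece $\R A_+$ --- gives $b_i(\R A)\customleq 2^n\mathcal{S}^O_{n-i}(\tau(P))$, the symmetry being handled by the ``$2^n$ or $0$ real critical copies'' lemma rather than by an orthant reduction; (iv) Danilov--Khovanskii identifies $l^*(k\Delta_m^n)$ with $\sum_i\binom{k+n-1-i}{n}h^{i,n-1-i}(\C A)$. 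The weights $\frac{(k+n-1-i)!}{(k-1-i)!\,n!}$ are just the binomial convolution of (i) and (ii); there is no ``partial Smith-type'' grouping and no uniform-over-triangulations double counting left to do. The obstacle you defer to the end is precisely the content you needed to produce, and the dilation-by-$2$ hypothesis, which is what makes (i) strong enough, plays no role anywhere in your sketch.

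Your accounts of the two refinements would also fail. The bound $b_1(\R A)\customleq h^{1,n-2}(\C A)$ is not the $k=2$ inequality with the $b_0$-term ``absorbed into $R(m)$'': $b_0(\R A)$ can grow like $m^n$, so it cannot be discarded into lower-order terms. The paper instead proves the separate estimate $\mathcal{S}^O_{n-1}(\tau(P))\customleq l^*(2\Delta_{m/2}^n)-(n+1)l^*(\Delta_{m/2}^n)$ by refining the triangulation to a maximal one through stellar subdivisions and checking that the quantity $s_1-ns_0$ never decreases along the refinement. Likewise, the restriction $n\leq 6$ has nothing to do with enumerating finitely many local models of primitive simplices: it comes from the Dehn--Sommerville relations for the closed $PL$-manifold $\widetilde{(\Delta_m^n)^*}$ (Lemma \ref{lemma_dehn_sommerville}), which for $n=5,6$ allow one to bound $s_2(\tau)$ (and then the higher $s_j$) by the counts of a primitive triangulation, whereas already for $n=7$ they only yield mixed inequalities such as $s_2^*-s_5^*+6s_7^*\leq s_2^p-s_5^p+6s_7^p$, which is why the binomial-weighted statement is only claimed for $n\leq 6$.
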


\begin{thm*}
    \textbf{\ref{main_theorem}}
    Let $n >2$ and let $\mathcal{F} = (A_m)_{m \in 2\mathbb{N}}$ be a family of $T^2$-hypersurfaces in $\P^n$ indexed by their degree. Then there exists a polynomial $Q_{\mathcal{F}} \in \R[m]$ of degree $n$ with positive leading coefficent such that, for any $m \in 2 \N$, one has
    \begin{equation*}
        \sum_{i = 1}^{n-1} b_i(\R A_m; \Z_2) \customleq m^n - Q_{\mathcal{F}}(m) \customeq \sum_{i = 1}^{2n-2} b_i(\C A_m) - Q_{\mathcal{F}}(m).
    \end{equation*}
\end{thm*}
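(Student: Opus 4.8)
The plan is to treat the two relations separately. The right-hand equivalence is classical and holds for \emph{any} choice of $Q_{\mathcal F}$: since $\C A_m$ is a smooth hypersurface of degree $m$ in $\P^n$, the Lefschetz hyperplane theorem together with Poincaré duality give $b_i(\C A_m)=b_i(\P^{n-1})$ for every $i\neq n-1$, which contributes only $O(1)$ to the sum; moreover $b_{n-1}(\C A_m)=(-1)^{n-1}\chi(\C A_m)+O(1)$, and $\chi(\C A_m)$ is the value at $d=m$ of the fixed polynomial $\tfrac1d((1-d)^{n+1}-1)+n+1$, which has degree $n$ and leading coefficient $(-1)^{n-1}$. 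Hence $\sum_{i=1}^{2n-2}b_i(\C A_m)$ is a polynomial in $m$ with leading term $m^n$, so $\sum_{i=1}^{2n-2}b_i(\C A_m)\customeq m^n$, and subtracting $Q_{\mathcal F}(m)$ from both sides gives the asserted equivalence.

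For the left-hand inequality, fix the family $\mathcal F$. Each $A_m$ is presented by a convex triangulation $T_m=2\sigma_m$ of $m\Delta_n$ --- the dilation by $2$ of a triangulation $\sigma_m$ of $\tfrac m2\Delta_n$ --- together with a sign distribution on the (even) vertices of $T_m$, so that the patchwork polynomial is a signed sum of squared monomials $P_{m,t}(x)=\sum_v\delta_v\,t^{\nu_m(v)}(x^{v/2})^2$. Since $\R A_m$ is invariant under the $(\Z/2)^n$-action by coordinate sign changes, the coordinate-subspace stratification of $\R\P^n$ together with a Mayer-Vietoris argument reduces the estimation of $\sum_i b_i(\R A_m)$ to estimating the total $\Z_2$-Betti number of the open positive-orthant piece $\R A_m\cap(\R_{>0})^n$, the strata of dimension $<n$ being real algebraic varieties of dimension $\le n-2$, hence absorbed into the $O(m^{n-1})$ slack of $\customleq$. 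On that open orthant I would bound $\sum_i b_i$, via the critical-point machinery of Itenberg and Shustin as developed above, by the number of real critical points of a generic function $\sum_i a_i x_i^2$ with $a_i>0$ restricted to $\R A_m$; in the coordinates $z_i=x_i^2$ this is the critical locus of a generic linear form on the degree-$\tfrac m2$ real $T$-hypersurface $\{\tilde P_{m,t}=0\}\subset(\R_{>0})^n$ attached to $\sigma_m$, so the Bernstein-Kushnirenko bound caps the number of complex solutions in the torus by $2^{-n}m^n+O(m^{n-1})$.

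The crux is to show that, in the patchwork limit $t\to 0^+$ and using the description of real critical points of a sum of squared monomials obtained earlier, at least $q_{\mathcal F}(m)$ of those $\le 2^{-n}m^n$ critical points fail to be real points of $(\R_{>0})^n$ --- they become non-real, escape to a face $\{z_i=0\}$ or to toric infinity, or collide --- where $q_{\mathcal F}\in\R[m]$ has degree $n$ and positive leading coefficient. The surviving critical points localize as $t\to 0$ near the $n$-dimensional simplices of $\sigma_m$, and the structural input is that each simplex $\Sigma$ contributes strictly fewer than $\mathrm{nvol}(\Sigma)$ real critical points, the deficit being controlled by the combinatorial type and the sign pattern of $\Sigma$; summing these per-simplex deficits over the total normalized volume $\Theta(m^n)$ of $\sigma_m$ yields $q_{\mathcal F}$ of the asserted shape. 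Collecting the $2^n$ orthants and the lower-order boundary corrections gives $\sum_i b_i(\R A_m)\le m^n-2^n q_{\mathcal F}(m)+O(m^{n-1})$, so with $Q_{\mathcal F}:=2^n q_{\mathcal F}$ one gets $\sum_{i=1}^{n-1}b_i(\R A_m)\le\sum_i b_i(\R A_m)\customleq m^n-Q_{\mathcal F}(m)$, which combined with the first paragraph finishes the proof.

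The step I expect to be the main obstacle is precisely this uniform lower bound $q_{\mathcal F}(m)=\Omega(m^n)$ on the number of lost critical points. It must hold for \emph{every} family, in particular for those whose triangulations $\sigma_m$ are primitive, where one still has to exhibit a degree-$n$ worth of critical points that turn non-real or escape; this forces the local analysis of critical points of sums of squared monomials to be quantitative rather than merely qualitative. A second difficulty is to control the critical points that drift toward the boundary faces $\{z_i=0\}$ while remaining real, since these are exactly the ones that could otherwise survive and be counted: ruling out too many such points is where the asymptotic bookkeeping of Itenberg and Shustin, transplanted to the squared-monomial setting, is needed, together with a sufficiently clean treatment of the Mayer-Vietoris gluing of the orthant pieces so that all interface corrections are genuinely $O(m^{n-1})$.
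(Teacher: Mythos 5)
Your treatment of the complex side is fine, and the general strategy of converting the problem into a count of real critical points attached to the simplices of the triangulation is indeed the one used in the paper. But the proof has a genuine gap exactly where you flag it, and the mechanism you propose to close it is false. You assert as the ``structural input'' that each $n$-simplex $\Sigma$ contributes strictly fewer than $\mathrm{nvol}(\Sigma)$ real critical points, and that summing these per-simplex deficits yields a degree-$n$ polynomial $q_{\mathcal F}$. In the $T^2$ setting every $n$-simplex of $\tau(P)$ has either $2^n$ or $0$ real $O$-critical copies, and a dilation by $2$ of a primitive simplex whose $O$-root and $O$-co-root carry opposite signs contributes exactly $2^n$ real critical points, which equals its normalized volume; the paper's own construction in Section \ref{section_constructions} realizes $\customeq \frac{1}{2^n n!}m^n$ such saturated simplices. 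So there is no per-simplex strict deficit, and no local, simplex-by-simplex analysis can produce the $\Omega(m^n)$ loss you need; the loss is a global phenomenon coming from the interaction of signs across stars of vertices. Since you yourself identify this lower bound as ``the main obstacle'' and give no argument for it, the proposal does not prove the statement.

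The paper closes this gap with a dichotomy on the number of vertices of $\tau(P)$, which your outline does not contain. Fix $a\in(0,1)$. If $\#V(\tau(P))$ is below an $a$-fraction of the maximal possible value, then a refinement argument (adding the missing even interior points one by one, each subdivision creating at least two new $n$-simplices) bounds the number of $n$-simplices, hence of real critical points, by $m^n-\Omega(m^n)$ (Proposition \ref{thm_nombre_de_betti_total_cas_1}). If instead $\#V(\tau(P))$ is above an $a$-fraction, the key is Proposition \ref{prop_preuve_asympto_mauvais_simplexe_dans_chaque_etoile}: the star of every interior vertex contains an $n$-simplex that either has no real critical copy or is not the dilation by $2$ of a primitive simplex. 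This is proved by a sign-propagation argument (Lemmas \ref{lemme_preuve_asympto_simplexes_0_n-2} and \ref{lemme_preuve_asympto_simplexes_n-1_n}, resting on Lemmas \ref{lemme_technique_signes} and \ref{lemme_technique_n-1_simplexes}, the latter being an integrality/midpoint argument that uses $n>2$): starting from a vertex of sign $-$, one either finds the required ``bad'' simplex or grows a $k$-simplex all of whose vertices bear $-$, until dimension $n-1$ or $n$ forces the conclusion. Since the number of interior vertices is $\Omega(m^n)$ in this case and each bad simplex has $n+1$ vertices, the deficit $D_1+D_2$ is $\Omega(m^n)$, giving Theorem \ref{thm_nombre_betti_total_cas2}. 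Splitting $\mathcal F$ into the two subfamilies and taking the worse of the two constants produces $Q_{\mathcal F}$. Without this case distinction and the star-level sign argument, your plan cannot handle, e.g., families built from dilated primitive triangulations with nearly all vertices present, which is precisely the hard case you acknowledge.
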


We also make some constructions of $T^2$-hypersurfaces in order to evaluate the sharpness of the proven bounds. We obtain for example the following result.

\begin{thm*}
    \textbf{\ref{prop_large_b0_alldim}}
    Let $n$ be a positive integer.
    Let $\Pi^n \subset \R^n$ be the polytope with vertices $(0,...,0)$, $(2,1,...,1)$, $(1,2,1,...,1)$, ..., $(1,...,1,2)$. There exists a family $(A_m^n)_{m \in 2\N}$ of $T^2$-hypersurfaces in the toric variety associated to $\Pi^n$ such that $b_0(\R A_m^n) \customeq \frac{n}{n+1}h^{n-1,0}(\C A_m^n)$ and such that for any even positive integer $m$, the hypersurface $A_m$ has Newton polygon $m\Pi^n$.
\end{thm*}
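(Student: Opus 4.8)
The plan is to pass to the ``squaring'' picture of $T^2$-hypersurfaces and reduce to a counting problem for an ordinary $T$-hypersurface in the positive orthant. Recall that a $T^2$-hypersurface of degree $m$ in $X_{\Pi^n}$ is defined by a polynomial $P_t(x) = \sum_a \epsilon(a)\,t^{\nu(a)}(x^a)^2$, where $a$ ranges over the lattice points of $\tfrac m2\Pi^n$, $\nu$ is a convex function and $\epsilon$ a sign distribution on these points. Writing $\widetilde P_t(y) = \sum_a \epsilon(a)\,t^{\nu(a)}y^a$, the polynomial $\widetilde P_t$ is a $T$-polynomial with Newton polytope $\tfrac m2\Pi^n$ defining a $T$-hypersurface $\widetilde A = \widetilde A_{m/2}$, and $P_t = \widetilde P_t\circ\rho$ on $(\R^*)^n$ where $\rho(x) = (x_1^2,\dots,x_n^2)$. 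Since $\rho$ is a trivial $(\Z_2)^n$-covering onto $(\R_{>0})^n$, and since $\Pi^n$ meets the coordinate hyperplanes only at the origin, $\R A_m\cap(\R^*)^n$ is the disjoint union of $2^n$ copies of $\R\widetilde A_{m/2}\cap(\R_{>0})^n$, with no two copies identified in the toric compactification, and the rest of $\R A_m$ contributes $O(m^{n-1})$ to $b_0$. Hence $b_0(\R A_m)\customeq 2^n c$, where $c$ is the number of connected components of $\R\widetilde A_{m/2}$ in $(\R_{>0})^n$. As $h^{n-1,0}(\C A_m)$ is the number of interior lattice points of $m\Pi^n$, which is $\customeq \tfrac{n+1}{n!}m^n$ since $\Pi^n$ has normalised volume $n+1$, the theorem amounts to producing, for every even $m$, a $T$-hypersurface with Newton polytope $\tfrac m2\Pi^n$ such that $c \customeq \tfrac{1}{(n-1)!}(m/2)^n$, equivalently $c \customeq \tfrac n{n+1}\bigl|\mathrm{int}(\tfrac m2\Pi^n)\cap\Z^n\bigr|$.

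For the construction I would use the decomposition $\Pi^n = \bigcup_{i=0}^n\Delta_i$, where $\Delta_i = \operatorname{conv}(\mathbf 1, F_i)$, $\mathbf 1 = (1,\dots,1)\in\mathrm{int}(\Pi^n)$, and $F_i$ is the facet of $\Pi^n$ opposite the vertex $v_i$ (with $v_0 = 0$, $v_i = \mathbf 1+e_i$); a short determinant computation shows each $\Delta_i$ is a unimodular simplex. Dilating by $\tfrac m2$, subdividing each $\tfrac m2\Delta_i$ by its standard unimodular (``alcoved'') triangulation, gluing along common faces and realising the result by a suitable convex $\nu$, one obtains a coherent unimodular triangulation $\tau$ of $\tfrac m2\Pi^n$. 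The sign distribution $\epsilon$ is then tuned so that the critical-point count of Itenberg and Shustin for $\widetilde P_t$ on the positive orthant is as large as it can be: one of the $n+1$ pieces is kept monochromatic as a ``background'', and on each of the other $n$ pieces the signs are prescribed, using integral coordinates adapted to the alcoved triangulation, so that an asymptotic $\tfrac n{n+1}$-proportion of the interior lattice points of $\tfrac m2\Pi^n$ become distinct connected components of $\R\widetilde A$ --- small empty components together with the larger (in dimension two, nested) ones they entail. Interfaces between pieces and merging near the toric boundary account only for $O(m^{n-1})$, so $c\customeq\tfrac n{n!}(m/2)^n$, whence $b_0(\R A_m)\customeq\tfrac{1}{(n-1)!}m^n\customeq\tfrac n{n+1}h^{n-1,0}(\C A_m)$; and $\operatorname{Newt}(A_m) = 2\cdot\tfrac m2\Pi^n = m\Pi^n$.

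The crux --- and the main obstacle --- is the component count: one must render ``connected component of $\R\widetilde A$ in $(\R_{>0})^n$'' into a combinatorial quantity attached to $(\tau,\epsilon)$, through the signed tropical hypersurface dual to $\tau$ and the Itenberg--Shustin analysis of its critical points, then design $\epsilon$ so that this quantity is $\tfrac n{n+1}\bigl|\mathrm{int}(\tfrac m2\Pi^n)\cap\Z^n\bigr| + O(m^{n-1})$ while simultaneously bounding from above what any choice of signs compatible with the construction can achieve, so that the loss really is confined to the single ``background'' piece. Secondary, but necessary, points are: carrying out the choices of $\nu$, the alcoved refinements and $\epsilon$ uniformly in $m$ so the error terms are genuinely $O(m^{n-1})$ with absolute constants; and verifying that the compactification $\R A_m\subset \R X_{m\Pi^n}$, in spite of the singularities of $X_{m\Pi^n}$, does not glue the $2^n$ orbit copies --- which again follows from $\Pi^n$ meeting each coordinate hyperplane only in the origin.
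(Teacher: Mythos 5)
Your reduction to the positive orthant and the numerology are consistent with the paper: $h^{n-1,0}(\C A_m^n)=l^*(m\Pi^n)$, the normalized volume of $\Pi^n$ is $n+1$, so the theorem amounts to producing, for each even $m$, a signed convex triangulation of $\frac{m}{2}\Pi^n$ whose patchworked PL hypersurface has $\customeq\frac{n}{n!}(m/2)^n$ components per orthant copy, i.e.\ components attached to a $\frac{n}{n+1}$ fraction of the interior lattice points. But the proposal stops exactly where the theorem begins: you yourself label the choice of triangulation and sign distribution realizing that proportion as ``the crux --- and the main obstacle'', and no such choice is produced or verified. Since the statement is an existence statement about a construction, this is not a deferred detail; it is the proof.

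Moreover, the allocation you sketch (keep one of the $n+1$ unimodular pieces $\operatorname{conv}(\mathbf 1,F_i)$ monochromatic and turn essentially every interior lattice point of the other $n$ pieces into a component) cannot work, already for $n=2$. The two non-background pieces carry $2(m/2)^2$ primitive triangles; every closed component of the PL curve lying in the open quadrant meets at least three triangles, distinct components meet disjoint triangles (a mixed-sign triangle contains a single arc), and the monochromatic background piece supplies only $O(m)$ usable triangles near the interface. So at most $\tfrac{2}{3}(m/2)^2+O(m)$ components can be obtained this way, while you need $(m/2)^2$ --- this is exactly the counting of Proposition \ref{prop_better_bound_curves} applied to a subregion. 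The factor $\frac{n}{n+1}$ cannot be bought by sacrificing one congruent piece of the polytope; the loss has to be spread uniformly over all of it. That is what the paper does: it takes the coarse convex subdivision $\overline{\tau_m^n}$ of $\frac{m}{2}\Pi^n$ by the hyperplanes $\sum_i x_i=(n+1)k$ and $-nx_j+\sum_{i\neq j}x_i=-(n+1)k$; Lemma \ref{lemma_points_in_cells} shows that the interior lattice points with $\sum_i x_i\not\equiv 0 \pmod{n+1}$ --- a $\frac{n}{n+1}$ fraction of all interior points --- are exactly the points interior to the $n$-cells, one per cell. Coning each cell from its interior point, signing these points $-$ and all other vertices $+$, every $-$ vertex becomes isolated with all-$+$ link, so its star carries one sphere component in each of the $2^n$ symmetric copies, and these spheres exhaust the PL hypersurface; hence $b_0(\R A_m^n)\customeq 2^n\cdot\frac{n}{n+1}\,l^*\bigl(\tfrac{m}{2}\Pi^n\bigr)\customeq\frac{n}{n+1}h^{n-1,0}(\C A_m^n)$. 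To salvage your plan you would have to replace ``all points of $n$ pieces'' by an explicit selection of isolated vertices of this kind distributed through the whole polytope, together with a triangulation making each selected point's link monochromatic --- which is precisely the paper's construction.
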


In Section \ref{section_real_crit_pts}, we recall some facts about Morse inequalities, real critical points of polynomials obtained through combinatorial patchworking and hypersurfaces that are defined by sums of squared monomials. Then, in Section \ref{section_triangulations}, we prove some preliminary combinatorial lemmas. The proofs of Theorems \ref{thm_partial_sums_betti_numbers} and \ref{main_theorem} are presented in Section \ref{section_proof_bounds} and constructions, including the proof of Theorem \ref{prop_large_b0_alldim}, are carried out in Section \ref{section_constructions}.

\section{Preliminaries}

\label{section_real_crit_pts}

\subsection{Morse inequalities for real algebraic hypersurfaces }

\label{section_morse_ineq}

\begin{sloppypar}
    Let $P$ be the product of a real polynomial of degree $m$ in $n$ variables and any monomial in $n$ variables. Let $A$ be the real algebraic hypersurface in $\P^n$ defined by $P$.
\end{sloppypar}

\begin{sloppypar}
    Assume that $P$ has only non-degenerate critical points in $(\mathbb{C}^*)^n$ and that the hypersurface $A$ is non-singular. Denote by $c_i^+(P)$ (respectively, $c_i^-(P)$) the number of real critical points of $P$ in ($\mathbb{R}^*)^n$ of index $i$ and with positive (respectively, negative) critical value. For any $i \in \{ 0,...,n-1 \}$, put $c_i(P) = c_i^+(P) + c_i^-(P)$.
\end{sloppypar}

\begin{prop}
    \textbf{(see \textit{e. g.} \cite{itenbergshustincritpts})}
    For any $i \in \{0,...,n-1 \}$, one has 
    \begin{equation*}
        b_i(\mathbb{R}A) \customleq \mathrm{min}(c_i^-(P) + c_{n-i}^+(P), c_{n-i-1}^-(P) + c_{i+1}^+(P)).
    \end{equation*}
\end{prop}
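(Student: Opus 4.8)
The plan is to deduce the stated bound from the classical Morse inequalities applied to a suitable Morse function built from the defining polynomial $P$, together with the symmetry coming from squaring. First I would recall that a real polynomial $P$ of degree $m$ (times a monomial) with only non-degenerate critical points in $(\C^*)^n$ induces, on the non-singular hypersurface $\R A \subset \R\P^n$, a Morse-theoretic picture via the rational function or via the distance-type function associated to $\log|P|$; more precisely, the standard approach in \cite{itenbergshustincritpts} is to view the real critical points of $P$ in $(\R^*)^n$ as critical points of the function $f = P$ restricted to the chart $(\R^*)^n$, and to relate the topology of $\R A = \{P = 0\}$ to the sublevel and superlevel sets $\{P \leq 0\}$ and $\{P \geq 0\}$. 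The key point is that $\R A$ is the common boundary of these two regions, so the Morse inequalities for $f$ on each region, with the critical points on the zero level contributing to the boundary, give homological bounds on $\R A$.

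The main steps, in order, would be: (1) Set up $f = P$ as a Morse function on a compactification of $(\R^*)^n$ compatible with $\P^n$, noting that the multiplication by a monomial ensures the behaviour at the coordinate hyperplanes and at infinity is controlled, so that no critical points are lost or created there (this is the role of the hypothesis that $P$ is a product of a degree-$m$ polynomial and a monomial). (2) Observe that $\R A$ separates its tubular neighbourhood, and use the exact sequence of the pair, or Alexander duality in $\R\P^n$, to write $b_i(\R A)$ in terms of the Betti numbers of $\{P \geq 0\}$ and $\{P \leq 0\}$ up to a bounded error term $R(m)$ of degree $\leq n-1$ — this is where the $\customleq$ symbol enters and where terms coming from the part of $\R A$ at infinity or on coordinate hyperplanes get absorbed. (3) Apply the weak Morse inequalities $b_i(\{P \leq 0\}) \leq c_i^-(P)$ (critical points of $f$ in the open region with negative value, of index $i$) and, by Poincaré–Lefschetz duality on the $n$-manifold-with-boundary $\{P \geq 0\}$, $b_i(\{P \geq 0\}) \leq c_{n-i}^+(P)$; combining gives $b_i(\R A) \customleq c_i^-(P) + c_{n-i}^+(P)$. (4) Run the same argument with the roles of $P$ and $-P$ swapped (equivalently, of $\{P\leq 0\}$ and $\{P \geq 0\}$), which replaces $i$ by $n-1-i$ on the hypersurface and yields $b_i(\R A) = b_{n-1-i}(\R A) \customleq c_{n-i-1}^-(P) + c_{i+1}^+(P)$; taking the minimum of the two bounds gives the proposition.

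The hard part will be step (2): making precise how the non-compactness of $(\R^*)^n$ and the passage to $\R\P^n$ only affects the Betti numbers by a polynomial of degree at most $n-1$ in $m$, and correctly accounting for the contribution of the critical points — one must be careful that the "critical points at infinity" or on the coordinate hyperplanes, which are not counted among the $c_i^\pm(P)$, contribute only lower-order terms, and that the Morse inequalities are being applied to a genuinely Morse function on a compact manifold (possibly after a small perturbation that does not change the count up to $R(m)$). A secondary subtlety is the duality argument on $\{P \geq 0\}$, which is a manifold with corners rather than a smooth manifold with boundary, so one should either smooth the corners or invoke a stratified version of Lefschetz duality; since all of this is carried out in \cite{itenbergshustincritpts}, I would cite that reference for the technical heart and present steps (1), (3), (4) in detail while sketching (2).
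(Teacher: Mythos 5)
The paper itself gives no proof of this proposition — it is quoted from \cite{itenbergshustincritpts} — and your sketch follows what is essentially the standard argument behind it: Morse theory for $P$ on $\{P\le 0\}$ and for $-P$ (or Lefschetz duality) on $\{P\ge 0\}$, giving $b_i(\{P\le 0\})\le c_i^-(P)$ and $b_i(\{P\ge 0\})\le c_{n-i}^+(P)$, then a Mayer--Vietoris/pair-sequence comparison with $\R A=\{P\ge 0\}\cap\{P\le 0\}$, with all contributions from the coordinate hyperplanes and the hyperplane at infinity absorbed into the $O(m^{n-1})$ error that the relation $\customleq$ allows. One correction to step (4): replacing $P$ by $-P$ only interchanges $c_i^-(P)\leftrightarrow c_{n-i}^+(P)$ and reproduces the first bound verbatim, so it does not by itself "replace $i$ by $n-1-i$ on the hypersurface"; the second bound comes from Poincar\'e duality $b_i(\R A)=b_{n-1-i}(\R A)$ on the closed $(n-1)$-manifold $\R A$ (with $\Z_2$ coefficients, as in the paper's conventions), which you do write down, applied to the first bound at index $n-1-i$ — this duality, not the sign swap, is the actual mechanism.
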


\subsection{Viro's combinatorial patchworking theorem}

\begin{sloppypar}
    Let $n$, $m$ be positive integers. Let $\Delta_m^n$ be the $n$-dimensional simplex in $\mathbb{R}^n$ with vertices $(0,...,0)$, $(m,0,...,0)$, $(0,m,0,...,0)$, ..., $(0,...,0,m)$. We denote by $(\Delta_m^n)^*$ the union of the images of $\Delta_m^n$ under all compositions of reflections with respect to coordinate hyperplanes.
    Let $\tau$ be a rectilinear triangulation with integer vertices of $\Delta_m^n$. Let $\mu: V(\tau) \rightarrow \{ +,- \}$ be a sign distribution on the set $V(\tau)$ of vertices of $\tau$. Extend $\tau$ to a symmetric triangulation $\tau^*$ of $(\Delta_m^n)^*$. 
    Extend the sign distribution $\mu$ to a sign distribution $\mu^*$ at the vertices of $\tau^*$ using the following rule : if two vertices are the images of each other under a reflection with respect to a certain coordinate hyperplane, then their signs are the same if their distance to the hyperplane is even, and their signs are opposite otherwise.
    If an $n$-simplex of $\tau^*$ has vertices with different signs, consider the convex hull of the middle points of its edges that have vertices of opposite sign. It is a piece of hyperplane. Denote by $\Gamma$ the union of all such hyperplane pieces. It is a piecewise-linear hypersurface contained in $(\Delta_m^n)^*$.
    Identify each pair of antipodal points on the boundary of $(\Delta_m^n)^*$ to obtain a piecewise-linear manifold $\widetilde{(\Delta_m^n)^*}$ and denote by $\widetilde{\Gamma}$ (respectively, $\widetilde{\tau^*}$) the image of $\Gamma$ (respectively, $\tau^*$) under this identification.
    The triangulation $\tau$ is said to be \textit{convex} if there exists a piecewise-linear convex function $\nu:\Delta_m^n \rightarrow \mathbb{R}$ such that $\nu(V(\tau)) \subset \mathbb{Z}$ and the domains of linearity of $\nu$ coincide with the $n$-simplices of $\tau$. The following result is due to Viro.
\end{sloppypar}

\begin{thm}
\textbf{(\cite{Vir83}, see also \cite{viropatchwork})} If $\tau$ is convex, then there exists a non-singular real algebraic hypersurface $A$ in $\mathbb{P}^n$ and a homeomorphism $\mathbb{RP}^n \rightarrow (\widetilde{(\Delta_m^n)^*})$ mapping $\mathbb{R}A$ onto $\widetilde{\Gamma}$. Such a hypersurface $A$ can be taken to be the compactification of the variety defined by a polynomial with Newton polytope $\Delta_m^n$ and of the form 
\begin{center}
    $P_t(x_1,...,x_n) = \sum_{(i_1,...,i_n) \in V(\tau)} \mu(i_1,...,i_n)x_1^{i_1}...x_n^{i_n}t^{\nu (i_1,...,i_n)}$    
\end{center}
for some positive and sufficiently small real number $t$.
\end{thm}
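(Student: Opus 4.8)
\emph{Proof sketch.} The plan is to follow Viro's original chart-by-chart argument: realise the combinatorial datum $(\tau,\mu,\nu)$ as the ``tropical limit'' of the family $(P_t)_{t>0}$ and transfer topological information from that limit back to $P_t$ for small $t$. I would split the proof into three stages: (1) show that $P_t$ defines a non-singular real algebraic hypersurface $A_t$ for all sufficiently small $t>0$; (2) show that the isotopy type of the pair $(\R\P^n,\R A_t)$ is independent of $t$ on an interval $(0,\varepsilon)$; and (3) identify that isotopy type with $(\widetilde{(\Delta_m^n)^*},\widetilde{\Gamma})$, using that $\widetilde{(\Delta_m^n)^*}$ is the real toric variety attached to the polytope $\Delta_m^n$ and hence homeomorphic to $\R\P^n$, since $\Delta_m^n$ is a dilate of the standard simplex.

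For stage (1), I would view $P_t$ as a section of the relevant line bundle on the toric variety $X_{\Delta_m^n}\cong\P^n$ and consider its discriminant $D(t)$, that is, the resultant of $P_t$ with its partial derivatives computed in a torus-invariant way, so that it also detects singularities on the toric boundary. Each Puiseux coefficient of $D(t)$ is a universal polynomial in the coefficients of $P_t$, and it suffices to exhibit one monomial $c\,t^a$ with $c\neq 0$ in its expansion. This is where convexity of $\nu$ enters: over each cell $\sigma$ of the subdivision of $\Delta_m^n$ induced by $\nu$, the lowest-order behaviour of $P_t$ is governed by the truncation $P_t^{\sigma}=\sum_{v\in\sigma\cap V(\tau)}\mu(v)x^v$, and after a monomial change of coordinates each such truncation defines a non-singular hypersurface --- in fact one isotopic, in each relevant orthant, to the hyperplane piece spanned by the midpoints of the opposite-sign edges of $\sigma$, which is checked by an elementary computation. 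Hence $D(t)\not\equiv 0$, so $D(t)\neq 0$ for $t$ in some interval $(0,\varepsilon_1)$; the same reasoning applied to the resultant of the partial derivatives shows that, after shrinking $\varepsilon_1$, $P_t$ moreover has only non-degenerate critical points in $(\C^*)^n$, which is the hypothesis used in the Morse-type proposition above. Stage (2) is then a direct application of Ehresmann's fibration theorem to the smooth family $\{(x,t):P_t(x)=0\}$ over $(0,\varepsilon_1)$, which trivialises it and fixes the isotopy type.

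Stage (3) is the technical heart. I would pass to logarithmic coordinates $y_i=\log_{1/t}|x_i|$ in each of the $2^n$ orthants of $(\R^*)^n$; as $t\to 0^+$ the rescaled real locus converges, uniformly on compacta of the relevant compactification, to the real part of the tropical hypersurface dual to $\nu$, whose cells are indexed by the positive-dimensional simplices of $\tau$. Over the region dual to a maximal simplex $\sigma$ with supporting affine function $\ell_\sigma$, absorbing the factor $t^{\ell_\sigma}$ into the variables turns $P_t$, to leading order, into the truncation $P_t^{\sigma}$, and one checks that the real zero set of $P_t^{\sigma}$ in a fixed orthant is isotopic, relative to its boundary, to the corresponding hyperplane piece of $\Gamma$; moreover these local models match along the lower-dimensional dual cells precisely according to the reflection rule defining $\mu^*$ on $(\Delta_m^n)^*$ and the antipodal identification on $\partial(\Delta_m^n)^*$. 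Gluing the local models by a partition of unity subordinate to the dual polyhedral decomposition yields a homeomorphism from a tubular neighbourhood of $\R A_t$ onto a neighbourhood of $\widetilde{\Gamma}$ in $\widetilde{(\Delta_m^n)^*}\cong\R\P^n$, and since both hypersurfaces separate their ambient manifold into regions carrying matching sign data, a standard extension argument upgrades this to a homeomorphism of pairs. The main obstacle is precisely this gluing: one must control the family near the ``walls'' between adjacent dual cells, where several truncations are simultaneously non-negligible, and check that the sign-compatibility encoded in $\mu^*$ makes the local hyperplane pieces fit together without creating or destroying topology --- this is the point where convexity of $\nu$ is used a second time, to ensure that the dual decomposition is genuinely polyhedral and that its cells meet as required.
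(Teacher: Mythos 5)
The paper does not prove this statement: it is Viro's patchworking theorem, quoted directly from \cite{Vir83} (see also \cite{viropatchwork}), so there is no internal proof to compare yours against. Your sketch follows the standard strategy found in the literature: non-degeneracy of the truncations $P_t^{\sigma}$ over the cells $\sigma$ of the subdivision induced by $\nu$, independence of the isotopy type in $t$ via Ehresmann, passage to logarithmic coordinates so that the real locus localizes over the polyhedral decomposition dual to $\nu$, gluing of the local models according to the sign rule defining $\mu^*$, and the identification of $\widetilde{(\Delta_m^n)^*}$ with $\R\P^n$. As an outline this is the accepted route, and your identification of the local model over a maximal cell (a truncation with only vertex monomials, hence a pullback of an affine-linear polynomial under a monomial covering of the torus) is correct.

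Two steps are, however, asserted where the actual content of the theorem lies. First, in stage (1) the claim that $D(t)\not\equiv 0$ does not follow merely from the non-degeneracy of each truncation: one must relate the lowest-order behaviour of the discriminant of $P_t$ to the discriminants of the truncations (a statement in the spirit of the theory of $A$-discriminants and secondary polytopes), or else bypass the discriminant entirely, as Viro does, by building the hypersurface chart-by-chart and applying the implicit function theorem uniformly in $t$; note also that non-singularity of $A$ in $\P^n$ requires non-degenerate truncations on \emph{all} faces of $\Delta_m^n$, not only on the top-dimensional cells of the subdivision. Second, the gluing in stage (3) along the walls where several truncations are simultaneously of leading order is exactly the technical heart of the theorem, and invoking a partition of unity and a ``standard extension argument'' does not yet control how the local isotopies match across adjacent charts. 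Since the statement is used in the paper as a cited classical result, these are gaps of a sketch relative to a complete proof rather than errors of approach; if you wanted to fill them, the cleanest references are Viro's own write-ups \cite{Vir83}, \cite{viropatchwork}.
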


\begin{rmk}
    Viro's theorem remains true if one replaces $\Delta_m^n$ with another $n$-dimensional lattice polytope $\Pi \subset (\R_{\geq 0})^n$ and $\P^n$ with the toric variety $X_{\Pi}$ associated to $\Pi$. In that more general case, the hypersurface $A$ is Newton-non-degenerate but is not necessarily non-singular when $X_{\Pi}$ is not non-singular.
\end{rmk}

\begin{defin}
    For $t$ as above, the polynomial $P_t$ is said to be a $T$\textit{-polynomial} of degree $m$ in $n$ variables. We denote by $\tau(P_t)$ the triangulation of $\Delta_m^n$ defined by $\nu$.
\end{defin}

There are two classes of triangulations that are often used to construct real algebraic hypersurfaces using combinatorial patchworking.

\begin{defin}
    A triangulation with integer vertices of an $n$-dimensional lattice polytope in $\R^n$ is said to be \textit{primitive} if the Euclidean volume of every $n$-dimensional simplex is equal to $\frac{1}{n!}$.
\end{defin}

\begin{defin}
    A triangulation $\tau$ with integer vertices of an $n$-dimensional lattice polytope $\Pi$ in $\R^n$ is said to be \textit{maximal} if all integer points of $\Pi$ are vertices of $\tau$. 
\end{defin}

\subsection{Real critical points of $T$-polynomials}

\begin{sloppypar}
    Let $P$ be a $T$-polynomial of degree $m$ in $n$ variables. Let $O$ be an integer point in $\R^n \setminus \Delta_m^n$ that is not contained in any hyperplane that contains an $(n-1)$-simplex of $\tau(P)$. Such a point is said to be \textit{generic with respect to} $\tau(P)$. If $O$ is generic with respect to $\tau(P)$, then the function $x^{-O}P$ has only non-degenerate critical points (see \cite{itenbergshustincritpts}). Let $\sigma^n$ be an $n$-simplex of $\tau(P)$.
\end{sloppypar}

\begin{defin}
    An $(n-1)$-face $\sigma^{n-1}$ of $\sigma^n$ is said to be $O$\textit{-visible} (respectively, $O$\textit{-non-visible}) if the cone of vertex $O$ over $\sigma^{n-1}$ does not intersect (respectively, intersects) the interior of $\sigma^n$.
\end{defin}

\begin{defin}
    The $O$-index of $\sigma^n$, denoted by $i^O(\sigma^n)$, is the number of $O$-visible faces of $\sigma^n$. The $O$-co-index of $\sigma^n$ is the number $n-i^O(\sigma^n)$.
\end{defin}

\begin{defin}
    The $O$-root (respectively, $O$-co-root) of $\sigma^n$, denoted by $V_+^O(\sigma^n)$ (respectively, $V_-^O(\sigma^n)$) is the set of vertices of $\sigma^n$ which belong to all $O$-visibles (respectively, all $O$-non-visible) $(n-1)$-faces of $\sigma^n$. 
\end{defin}

\begin{defin}
    Let $s$ be a composition of reflections with respect to coordinate hyperplanes of $\mathbb{R}^n$. The symmetric copy $s(\sigma^n)$ of $\sigma^n$ is said to be $\textit{real}$ $O$\textit{-critical} if the vertices of $s(\sigma^n)$ that are symmetric copies of the vertices in $V_+^O(\sigma^n)$ bear the same sign and the vertices of $s(\sigma^n)$ that are symmetric copies of the vertices in $V_-^O(\sigma^n)$ bear the opposite sign.
\end{defin}

\begin{thm}
    \textbf{(\cite{itenbergshustincritpts})} \label{thmitenbergshustin} There exists a one-to-one correspondence between the real critical points in $(\R^*)^n$ of the function $x^{-O} P$ and the real $O$-critical symmetric copies of simplices of $\tau(P)$ such that the index of a real critical point of $x^{-O}P$ with positive (respectively, negative)  critical value is equal to the $O$-index (respectively, $O$-co-index) of the corresponding simplex.
\end{thm}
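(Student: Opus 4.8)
The plan is to rewrite the critical point condition as a sparse Laurent polynomial system, localize its real solutions near the $n$-dimensional simplices of $\tau$ by a patchworking/tropical limit, and then extract existence, index and the sign of the critical value from the barycentric coordinates of $O$. Multiplying $\partial_j(x^{-O}P)=0$ by $x_j$ and dividing by the nonvanishing factor $x^{-O}$, a point $x\in(\R^*)^n$ is a critical point of $x^{-O}P$ if and only if
\begin{equation*}
\sum_{a\in V(\tau)}(a_j-O_j)\,\mu(a)\,t^{\nu(a)}\,x^a=0,\qquad j=1,\dots,n,\tag{S}
\end{equation*}
a system in which the coefficient of $x^a$ in equation $j$ has $t$-valuation $\nu(a)$; thus (S) is a patchworked polynomial system subordinate to $\tau$. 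Recall that in the orthant of $(\R^*)^n$ with sign vector $\epsilon\in\{\pm1\}^n$, combinatorial patchworking attaches to the copy of a vertex $a$ of $\tau$ the sign $\mu^*(s_\epsilon(a))=\mu(a)\,\epsilon^a$.

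I would first localize the real solutions. In the orthant $\epsilon$, write $x_j=\epsilon_j t^{-c_j}\xi_j$ with $\xi\in(\R_{>0})^n$ and $c=\nabla(\nu|_{\sigma})$ for an $n$-dimensional simplex $\sigma$ of $\tau$, with vertices $a^{(0)},\dots,a^{(n)}$. The tropical analysis of the patchworked system (S) shows that for $t$ small every real critical point clusters, in logarithmic scale, around one such vertex set, and in the limit $t\to0$, after dividing by a common power of $t$, (S) degenerates to the $t$-independent truncated system
\begin{equation*}
\sum_{k=0}^n (a^{(k)}-O)\,\mu^*\!\big(s_\epsilon(a^{(k)})\big)\,\xi^{a^{(k)}}=0,\qquad \xi\in(\R_{>0})^n;\tag{$S_\sigma$}
\end{equation*}
for a proper face of $\tau$ the analogous truncation involves at most $n$ monomials and, by the genericity of $O$, has no solution in $(\C^*)^n$, so only the $n$-dimensional simplices contribute, and persistence of non-degenerate critical points matches solutions of $(S_\sigma)$ with genuine critical points of $x^{-O}P$ for $t$ small. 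I expect the main obstacle to be exactly this localization: making the clustering \emph{uniform} in small $t$, so that $(S_\sigma)$ accounts for every real critical point, requires the quantitative side of patchworking for polynomial systems, together with careful bookkeeping of the dictionary between orthants, sign vectors and induced signs on symmetric copies.

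Next I would analyze $(S_\sigma)$. The vectors $v_k:=a^{(k)}-O$ span $\R^n$ and satisfy a unique linear relation $\sum_k c_k v_k=0$ normalized by $\sum_k c_k=1$; the $c_k$ are the barycentric coordinates of $O$ relative to $\sigma$, all nonzero by the genericity of $O$, and one checks directly that $V_+^O(\sigma)=\{a^{(k)}:c_k>0\}$, $V_-^O(\sigma)=\{a^{(k)}:c_k<0\}$ and $i^O(\sigma)=\#\{k:c_k<0\}$. If $\xi\in(\R_{>0})^n$ solves $(S_\sigma)$, then $\big(\mu^*(s_\epsilon(a^{(k)}))\,\xi^{a^{(k)}}\big)_k=\rho\,(c_k)_k$ for some nonzero $\rho$, so $\mu^*(s_\epsilon(a^{(k)}))=\mathrm{sign}(\rho)\,\mathrm{sign}(c_k)$ for all $k$; equivalently, the copies of the $V_+^O(\sigma)$-vertices all bear one common sign and those of the $V_-^O(\sigma)$-vertices all bear the opposite sign — which is precisely the condition that $s_\epsilon(\sigma)$ be real $O$-critical. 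Conversely, when $s_\epsilon(\sigma)$ is real $O$-critical, $(S_\sigma)$ reduces to equations $\xi^{a^{(k)}}=$ (prescribed positive numbers), $k=0,\dots,n$, linear in $\log\xi$ together with one auxiliary scalar and with the homogenization of $a^{(0)},\dots,a^{(n)}$ as coefficient matrix, hence invertible; this gives a unique positive $\xi$. Together these produce the asserted bijection.

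Finally, for the index and the sign of the critical value, pass to logarithmic coordinates, in which the leading part of $x^{-O}P$ near the critical point is $\epsilon^{-O}=\prod_j\epsilon_j^{O_j}\in\{\pm1\}$ (coming from the factor $x^{-O}$) times a positive multiple of $h(\eta)=\sum_k\mu^*(s_\epsilon(a^{(k)}))\,e^{\langle a^{(k)}-O,\,\eta\rangle}$; the equation $(S_\sigma)$ read in these coordinates says that at the critical point $\mu^*(s_\epsilon(a^{(k)}))\,e^{\langle a^{(k)}-O,\,\eta\rangle}=\rho c_k/\xi^{O}$ with $\mathrm{sign}(\rho)=\eta$, the common sign of the $V_+^O(\sigma)$-copies. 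Hence the Hessian of the leading part, $\epsilon^{-O}\sum_k\mu^*(s_\epsilon(a^{(k)}))\,e^{\langle a^{(k)}-O,\,\eta\rangle}\,v_k v_k^{T}$, equals $\epsilon^{-O}\eta$ times a positive multiple of $H:=\sum_{k=0}^n c_k v_k v_k^{T}$, and $h$ takes the value $\rho/\xi^{O}$ at the critical point (since $\sum_k c_k=1$), so the critical value of $x^{-O}P$ has sign $\epsilon^{-O}\eta$. The quadratic form $x\mapsto\sum_k c_k\langle v_k,x\rangle^2$ attached to $H$ is, up to a linear change of variables, the restriction of the diagonal form $\sum_{k=0}^n c_k t_k^2$ on $\R^{n+1}$ to the hyperplane $\{t:\sum_k c_k t_k=0\}$; since the form-normal $(1,\dots,1)$ of that hyperplane has positive form-value $\sum_k c_k=1$, this restriction keeps the ambient negative index $\#\{k:c_k<0\}=i^O(\sigma)$ and has positive index $n-i^O(\sigma)$. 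Therefore the critical point has positive critical value exactly when $\epsilon^{-O}\eta=+1$, and then its index is $\mathrm{index}(H)=i^O(\sigma)$; it has negative critical value exactly when $\epsilon^{-O}\eta=-1$, and then its index is $\mathrm{index}(-H)=n-i^O(\sigma)$, the $O$-co-index. This is the assertion.
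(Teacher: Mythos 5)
This theorem is not proved in the paper at all: it is imported verbatim from \cite{itenbergshustincritpts}, so there is no internal argument to compare yours with; I can only judge your reconstruction on its own merits. Its finite-dimensional core is correct and fully consistent with how the theorem is used later in the text (the corollary on $O$-indices of $B_O(\sigma^0)$, $F_O(\sigma^0)$, and the extrema construction of Section 5.1): the critical-point system $\sum_a (a_j-O_j)\mu(a)t^{\nu(a)}x^a=0$, the identification of the coefficients of the unique dependency among the $v_k=a^{(k)}-O$ with the barycentric coordinates $c_k$ of $O$, the dictionary $V_+^O(\sigma)=\{c_k>0\}$, $V_-^O(\sigma)=\{c_k<0\}$, $i^O(\sigma)=\#\{k:c_k<0\}$, the equivalence between solvability of $(S_\sigma)$ in the positive orthant and real $O$-criticality of $s_\epsilon(\sigma)$ (with uniqueness from the invertibility of the homogenized vertex matrix), and the signature argument showing the Hessian is, up to a positive factor, $\pm H$ with $H=\sum_k c_k v_k v_k^{T}$ of negative index $i^O(\sigma)$, the sign of the critical value being $\epsilon^{-O}$ times the common sign on the copies of $V_+^O(\sigma)$. (Minor slip: you use $\eta$ both for the logarithmic coordinates and for that common sign.)

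The genuine gap is the one you flag yourself: the localization. What your argument actually establishes is one direction of the correspondence — every real $O$-critical copy yields, for $t$ small, a nondegenerate real critical point of $x^{-O}P$ with the asserted index and sign of critical value. The statement that these exhaust \emph{all} real critical points (and that distinct copies give distinct points) is precisely the quantitative patchworking assertion for the system (S): one must show, uniformly in small $t$, that every real solution remains in a log-scale neighborhood of the locus dictated by $\nu$, that no solutions drift towards the boundary of $\Delta_m^n$ or to infinity in $(\R^*)^n$ (your genericity argument only kills the limiting truncated systems of proper faces), and that each cluster contains no more solutions than its truncated system provides. A BKK-type count does not close this cheaply either, because the truncated systems $(S_\sigma)$ may have degenerate complex solutions — your nondegeneracy proof covers only their positive real solutions. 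This uniform localization is the analytic heart of the result in \cite{itenbergshustincritpts}, and invoking "the tropical analysis shows" leaves it unproved; as written, your proposal gives a lower bound on the number of real critical points of each type rather than the bijection claimed in the theorem.
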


\begin{rmk}
    Notice that for any choice of $O$, the pair $((\R^*)^n,\{x \in (\R^*)^n\;|\; x^{-O} P(x) = 0\})$ is homeomorphic to $((\R^*)^n,\{x \in (\R^*)^n\;|\;P(x) = 0\})$.
\end{rmk}

\subsection{$T^2$-polynomials}

\begin{nota}
    We put $TP^n = \{ P| \;P$ is a $T$-polynomial in $n$ variables; $\tau(P)$ is the dilation by $2$ of a triangulation with integer vertices of $\Delta_{\frac{m}{2}}^n$, where $m$ is the degree of $P \}$. An element $P$ of $TP^n$ is said to be a $T^2$\textit{-polynomial} in $n$ variables. The real algebraic hypersurface in $\P^n$ defined by $P$ will be called a $T^2$\textit{-hypersurface}.
\end{nota}

\begin{nota}
    Let $a$, $b \in [0,1]$ be two real numbers such that $a \leq b$. Let $Q$, $R \in \mathbb{R}[m]$ be two polynomials of degree at most $n-1$.\\ 
    We put $TP^n(a,Q,b,R) := \{ P \in TP^n |\; \frac{a}{n!2^n}m^n + Q(m) \leq \#V(\tau(P)) \leq \frac{b}{n!2^n}m^n + R(m)\}$. Here, given a triangulation $\tau$, we denote by $\#V(\tau)$ the cardinal of the set of vertices of $\tau$.
\end{nota}


\begin{lem}
    \textbf{(\cite{itenbergshustincritpts})} Let $P\in TP^n$ and let $O$ be an integer point that is generic with respect to $\tau(P)$. Any $n$-simplex $\sigma^n$ of the triangulation $\tau(P)$ has either $2^n$ or $0$ real $O$-critical symmetric copies.
\end{lem}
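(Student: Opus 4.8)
The plan is to exploit the one structural feature distinguishing a $T^2$-polynomial from a general $T$-polynomial: since $\tau(P)$ is the dilation by $2$ of an integer triangulation of $\Delta_{m/2}^n$, every vertex of $\tau(P)$ has all of its coordinates \emph{even}. I would use this to show that the extended sign distribution $\mu^*$ is constant on the orbit of each vertex under the group $G$ generated by the reflections with respect to the coordinate hyperplanes, and the statement then follows at once.

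Concretely, the first step is the parity observation. Let $v\in V(\tau(P))$ and let $w=s(v)$ be one of its symmetric copies, $s\in G$. For every index $j$ one has $w_j=\pm v_j$, so $|w_j|=|v_j|$ is even; hence the reflection with respect to $\{x_j=0\}$, applied to $w$, leaves its sign unchanged, because the distance $|w_j|$ from $w$ to that hyperplane is even. Since any element of $G$ is a composition of such reflections, the defining rule for $\mu^*$ gives $\mu^*(s(v))=\mu(v)$ for every $s\in G$. Consequently, if $\sigma^n$ is an $n$-simplex of $\tau(P)$ with vertices $v_0,\dots,v_n$, then for every $s\in G$ the copy $s(\sigma^n)$ has vertices $s(v_0),\dots,s(v_n)$ with $\mu^*(s(v_i))=\mu(v_i)$; in other words the pattern of signs carried by the vertices of $s(\sigma^n)$ does not depend on $s$.

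The second step is to feed this into the definition of a real $O$-critical simplex. By definition $s(\sigma^n)$ is real $O$-critical exactly when the vertices $s(v)$ with $v\in V_+^O(\sigma^n)$ all carry one sign and the vertices $s(v)$ with $v\in V_-^O(\sigma^n)$ all carry the opposite sign; by the previous step this is equivalent to asking that $\mu$ be constant on $V_+^O(\sigma^n)$, constant on $V_-^O(\sigma^n)$, and opposite on the two sets — a condition involving $\sigma^n$ alone and not $s$. Thus $\sigma^n$ is real $O$-critical if and only if each of its symmetric copies is. It remains to note that the $2^n$ copies $s(\sigma^n)$, $s\in G$, are pairwise distinct: if $t(\sigma^n)=\sigma^n$ for some $t=\mathrm{diag}(\varepsilon_1,\dots,\varepsilon_n)\in G$ with $\varepsilon_j=-1$, pick a vertex $v$ of $\sigma^n$ with $v_j>0$ — one exists because $\sigma^n$ is $n$-dimensional and contained in $(\R_{\geq 0})^n$ — and observe that $t(v)$ has negative $j$-th coordinate, hence lies outside $\sigma^n$, a contradiction; so $t=\mathrm{id}$. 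Combining the two steps, $\sigma^n$ has either all $2^n$ of its symmetric copies real $O$-critical, or none.

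The argument is short, and there is no serious obstacle; the only points needing a little care are the bookkeeping in the last step (that the $2^n$ copies really are distinct simplices) and the fact — which I would recall from \cite{itenbergshustincritpts} — that $1\leq i^O(\sigma^n)\leq n$, so that $V_+^O(\sigma^n)$ and $V_-^O(\sigma^n)$ are non-empty and the criterion above is the expected one. The genuinely new input compared with the general $T$-polynomial case is the parity observation of the first step, which is exactly where the hypothesis $P\in TP^n$ enters.
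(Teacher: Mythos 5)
Your proof is correct: the key point is exactly the one the paper relies on (and quotes from \cite{itenbergshustincritpts}) — since every vertex of $\tau(P)$ has even coordinates, the extension rule makes $\mu^*$ invariant under all compositions of coordinate reflections, so the real $O$-criticality condition is independent of the chosen symmetric copy. Your additional care about the $2^n$ copies being pairwise distinct and about $V_\pm^O(\sigma^n)$ being non-empty is accurate, so nothing is missing.
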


\begin{nota}
    Let $m$ and $n$ be positive integers, let $\tau$ be a convex triangulation with integer vertices of $\Delta_m^n$, and let $O$ be an integer point that is generic with respect to $\tau$. For any $i \in \{1,...,n \}$, we denote by $\mathcal{S}^O_i(\tau)$ the number of $n$-simplices of $\tau$ of $O$-index $i$. Additionnaly, we denote by $\bar{\mathcal{S}}_i^O(\tau)$ the number of $n$-simplices of $\tau$ of $O$-index $i$ that have real critical copies.
\end{nota}

\begin{rmk}
    For any $T^2$-polynomial $P$, for any integer $i \in \{0,...,n-1 \}$, and for any integer point $O$ that is generic with respect to $\tau(P)$, one has $c_i^-(x^{-O}P) + c_{n-i}^+(x^{-O}P) = 2^n \bar{\mathcal{S}}_{n-i}^O(\tau(P)) \leq 2^n \mathcal{S}^O_i(\tau(P))$.
\end{rmk}

\subsection{Sums of squared monomials and sibling hypersurfaces}

\label{section_sibling_hypersurf}

\begin{nota}
    Let $m$ and $n$ be positive integers. We denote by $SQM(m,n)$ the set of polynomials that are of degree $m$ in $n$ variables, that have real coefficients and that are sums of squared monomials. Any element $P$ of $SQM(m,n)$ is said to be an $SQM$\textit{-polynomial of degree} $m$ \textit{in} $n$ \textit{variables}. The real algebraic hypersurface defined by $P$ is said to be an $SQM$-hypersurface.
\end{nota}

\begin{sloppypar}
    Given an $SQM$-polynomial $P$ of degree $m$ in $n$ variables, the real algebraic hypersurface $A$ in $\mathbb{P}^n$ defined by $P$ is symmetric with respect to all coordinate hyperplanes. Compositions of reflections with respect to coordinate hyperplanes commute with the restrictions of the complex conjugation on $\mathbb{CP}^n$ to $\C A$. The composition of the conjugation with any composition of reflections with respect to coordinate hyperplanes gives a new antiholomorphic involution on $\mathbb{CP}^n$, whose fixed point set is homeomorphic to $\mathbb{RP}^n$. Each coordinate of a point in the fixed point set can be either real or pure imaginary, depending on the chosen composition of reflections. Hence we obtain $2^n$ natural real structures on $\mathbb{C}A$. The obtained real algebraic varieties are called the \textit{sibling hypersurfaces} of $A$. The set of the sibling hypersurfaces of $A$ is denoted by $\mathcal{F}_A$.
\end{sloppypar}

\begin{sloppypar}
    The real part of each sibling hypersurface of $A$ can be seen as a gluing of $2^n$ copies of the intersection of $\mathbb{R}B$ with a certain (closed) orthant of $\mathbb{RP}^n$, where $B$ is the real algebraic hypersurface defined by the polynomial obtained from $P$ by dividing all exponents by $2$. Hence, the disjoint union of all the sibling real parts carries all the topological information about $\mathbb{R}B$ and it is possible to obtain some topological restrictions on the disjoint union of the real parts of the sibling hypersurfaces from the known restrictions on $\mathbb{R}B$.
\end{sloppypar}

\begin{prop}
    \textbf{(Application of the Smith-Thom inequality)} 
    \begin{center}
    $\sum_{S \in \mathcal{F}_A} b_*(\mathbb{R}S) \customeq 2^n b_*(\mathbb{R}B) \leq 2^n b_*(\mathbb{C}B)$. \qedsymbol \end{center} 
\end{prop}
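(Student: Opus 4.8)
The plan is to exploit the structure described in the two preceding paragraphs: each sibling hypersurface $\R S$ is a union of $2^n$ copies of $\R B \cap \overline{\mathcal{O}}$, where $\mathcal O$ runs over the open orthants of $\R\P^n$ and $B$ is the "halved-exponents" hypersurface. First I would fix notation for the orthants and record that the restriction to the closed positive orthant of any sibling real part is precisely (a homeomorphic copy of) $\R B \cap (\R_{\geq 0})^n$ compactified appropriately; the key combinatorial observation is that as $S$ ranges over $\mathcal F_A$ and as one ranges over the $2^n$ orthant-pieces of each $\R S$, every orthant-piece of $\R B$ appears the same number of times — namely $2^n$ times in total across the disjoint union $\bigsqcup_{S}\R S$. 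This is exactly the content of the sentence "the disjoint union of all the sibling real parts carries all the topological information about $\R B$."

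Next I would turn the topological count into an (in)equality up to lower-order terms. The disjoint union $\bigsqcup_{S \in \mathcal F_A}\R S$ is, up to a subset contained in coordinate hyperplanes (which contributes only $O(m^{n-1})$ to any Betti number, hence is absorbed by $\customeq$), homeomorphic to $2^n$ disjoint copies of $\R B$ restricted to $(\R^*)^n$, reassembled; more precisely $b_*\big(\bigsqcup_S \R S\big) \customeq 2^n\, b_*(\R B)$, where $b_*(X) := \sum_i b_i(X)$ and additivity of $b_*$ over disjoint unions is used on the left. Then the Smith–Thom inequality applied to $B$ gives $b_*(\R B) \leq b_*(\C B)$, and since $\C B$ and $\C S$ are the same complex variety (the exponents are halved but the complex point set in $(\C^*)^n$ is unchanged up to the finite covering $z \mapsto z^2$, which does not affect the complex topology in the relevant range — or one simply takes this as the definition of $\C B$), no correction term is needed in that last step. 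Chaining these gives $\sum_{S \in \mathcal F_A} b_*(\R S) \customeq 2^n b_*(\R B) \leq 2^n b_*(\C B)$, which is the claim.

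The main obstacle, and the step deserving the most care, is the homeomorphism-up-to-codimension-one claim: one must check that gluing the $2^n$ orthant-pieces along coordinate hyperplanes, and summing over the $2^n$ real structures, genuinely reproduces $2^n$ full copies of $\R B$ and that the discrepancy (boundary strata, the behaviour at infinity in $\R\P^n$, possible identifications forced by the patchworking on $\partial(\Delta_m^n)^*$) is supported on a set of dimension $\leq n-1$ and therefore contributes only a polynomial of degree $\leq n-1$ in $m$ to $b_*$, which is precisely what the $\customeq$ relation tolerates. Once that bookkeeping is in place, the remaining inequality is a direct invocation of Smith–Thom and the identification $b_*(\C B) = b_*(\C A)$ up to lower-order terms, so the proof is short. \qedsymbol
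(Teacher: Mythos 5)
Your argument is correct and is essentially the paper's own: the proposition carries no separate proof precisely because it follows, as you argue, from the orthant decomposition of the sibling real parts described immediately before it (each open-orthant piece of $\R B$ appears $2^n$ times in $\bigsqcup_{S\in\mathcal{F}_A}\R S$, and the strata lying in coordinate hyperplanes or at infinity contribute only lower-order terms absorbed by $\customeq$), followed by the Smith--Thom inequality applied to $B$. One caution: your closing remarks that the covering $z\mapsto z^2$ ``does not affect the complex topology'' and that $b_*(\C B)=b_*(\C A)$ up to lower-order terms are false --- the torus part of $\C A$ is a $2^n$-fold cover of that of $\C B$, and $b_*(\C A)\customeq m^n$ whereas $2^n b_*(\C B)\customeq m^n$, so $b_*(\C B)$ is smaller by a factor of $2^n$ --- but neither claim is needed, since the stated chain ends at $2^n b_*(\C B)$ and only uses Smith--Thom for $B$ itself.
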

\begin{prop}
    \textbf{(Application of the Petrovskii-Oleinik inequality)} If $n-1$ is even and $B$ is non-singular, then 
    \begin{center}$-2^nh^{n-1,n-1} (\mathbb{C}B)+ 2^{n+1} \leq \sum_{S \in \mathcal{F}_A} \chi(\mathbb{R}S) \leq 2^nh^{n-1,n-1} (\mathbb{C}B)$.\qedsymbol \end{center} 
\end{prop}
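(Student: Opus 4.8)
The plan is to reduce the statement to the Petrovskii–Oleinik inequality for $\R B$ by upgrading the structural description above — each sibling real part is a gluing of $2^n$ copies of the intersection of $\R B$ with a closed orthant — into the exact identity $\sum_{S \in \mathcal{F}_A} \chi(\R S) = 2^n \chi(\R B)$. To this end I would work with the finite morphism $f \colon \C\P^n \to \C\P^n$, $[z_0 : \dots : z_n] \mapsto [z_0^2 : \dots : z_n^2]$, which has degree $2^n$, has branch locus the union of the coordinate hyperplanes, and satisfies $\C A = f^{-1}(\C B)$ because $A$ is an $SQM$-hypersurface with associated hypersurface $B$. For the sibling $S$ attached to a sign vector $\epsilon \in \{+,-\}^n$, with antiholomorphic involution $\iota_\epsilon$, one checks that $f$ intertwines $\iota_\epsilon$ with complex conjugation, so that $f(\R S)$ lies in the real locus and in fact $\R S = f^{-1}(N_\epsilon)$, where $N_\epsilon := \R B \cap \overline{O_\epsilon}$ is the intersection of $\R B$ with the closed orthant of $\R\P^n$ determined by $\epsilon$. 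Over the stratum $Z_J \subset \R\P^n$ consisting of the points whose set of vanishing homogeneous coordinates is exactly $J$, the map $f$ is unbranched and restricts to a genuine covering of degree $2^{\,n-|J|}$.

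Next I would sum over the siblings using additivity and multiplicativity of the compactly supported Euler characteristic $\chi_c$; since all the sets involved are constructible, no smoothness is needed here. Stratifying $\R\P^n$ by the $Z_J$ gives $\chi(\R S) = \chi_c(\R S) = \sum_{J} \chi_c\bigl(f^{-1}(N_\epsilon \cap Z_J)\bigr) = \sum_{J} 2^{\,n-|J|}\,\chi_c(N_\epsilon \cap Z_J)$. As $\epsilon$ runs over the $2^n$ sign vectors, $N_\epsilon \cap Z_J$ equals a given one of the $2^{\,n-|J|}$ sign-pieces of $\R B \cap Z_J$ (indexed by the signs of the nonzero coordinates) for exactly $2^{|J|}$ choices of $\epsilon$, so that $\sum_\epsilon \chi_c(N_\epsilon \cap Z_J) = 2^{|J|}\,\chi_c(\R B \cap Z_J)$. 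Summing over $\epsilon$ and $J$, the factors $2^{\,n-|J|}$ and $2^{|J|}$ combine into a uniform $2^n$, and since the $Z_J$ partition $\R\P^n$ and $\R B$ is compact one obtains $\sum_{S \in \mathcal{F}_A}\chi(\R S) = 2^n \sum_J \chi_c(\R B \cap Z_J) = 2^n \chi_c(\R B) = 2^n \chi(\R B)$.

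Finally I would invoke the Petrovskii–Oleinik inequality: as $B$ is non-singular and $\R B$ has even dimension $n-1$, one has $2 - h^{\frac{n-1}{2},\frac{n-1}{2}}(\C B) \leq \chi(\R B) \leq h^{\frac{n-1}{2},\frac{n-1}{2}}(\C B)$; multiplying through by $2^n$ and inserting the identity of the previous paragraph gives the asserted two-sided bound (with $h^{\frac{n-1}{2},\frac{n-1}{2}}(\C B)$ being the middle Hodge number denoted $h^{n-1,n-1}(\C B)$ in the statement). I expect the only genuine work to be the bookkeeping in the middle step: checking that the covering degree of $f$ over $Z_J$ is $2^{\,n-|J|}$ and that each sign-piece of $\R B \cap Z_J$ arises as $N_\epsilon \cap Z_J$ for exactly $2^{|J|}$ sign vectors $\epsilon$. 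Both are elementary counts of signs modulo simultaneous negation, but they have to be matched precisely so that the product telescopes to $2^n$ independently of $J$; it is worth recording that non-singularity of $B$ and the parity of $n-1$ are used only in this last step, to license the Petrovskii–Oleinik estimate.
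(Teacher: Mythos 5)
Your proposal is correct and takes essentially the route the paper intends: the proposition is stated there as an immediate consequence of the observation that each sibling real part is glued from the orthant pieces of $\R B$, i.e.\ of the identity $\sum_{S\in\mathcal{F}_A}\chi(\R S)=2^n\chi(\R B)$, combined with the Petrovskii--Oleinik inequality $2-h^{\frac{n-1}{2},\frac{n-1}{2}}(\C B)\leq\chi(\R B)\leq h^{\frac{n-1}{2},\frac{n-1}{2}}(\C B)$ for the non-singular even-dimensional hypersurface $B$. Your stratification by the sets $Z_J$ and the $\chi_c$-bookkeeping through the squaring map is precisely the verification of that $2^n$ factor which the paper leaves implicit, so it fills in rather than replaces the argument.
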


\begin{sloppypar}
    Every $T^2$-polynomial is an $SQM$-polynomial. If $P$ is a $T^2$-polynomial and $\tau(P)$ is the dilation by $2$ of a primitive triangulation, then one can use the known results on hypersufaces obtained by primitive combinatorial patchworking to get restrictions on the topology of the disjoint union of the siblings of the hypersurface defined by $P$.
\end{sloppypar}

\begin{lem}
    Let $P$ be a $T^2$-polynomial defining a $T^2$-hypersurface $A$ in $\P^n$. Then each sibling hypersurface of $A$ can be defined by a $T^2$-polynomial $P'$ such that:
    \begin{itemize}
        \item $\tau(P) = \tau(P')$;
        \item the signs on the vertices of $\tau(P')$ are the signs on the vertices of the dilation by $\frac{1}{2}$ of the datum in the orthant defined by a certain composition of reflections with respect to coordinate hyperplanes.
    \end{itemize}
\end{lem}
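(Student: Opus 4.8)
The plan is to make the sibling construction completely explicit at the level of defining polynomials and then to read off the resulting combinatorial datum. Write $P$ as a $T^2$-polynomial, $P = P_t(x) = \sum_{\alpha \in V(\tau)} \mu(\alpha)\, x^\alpha\, t^{\nu(\alpha)}$, where $\tau = \tau(P)$ is the dilation by $2$ of a triangulation $\tau'$ of $\Delta_{\frac{m}{2}}^n$, $\nu$ is the associated convex function, and $t$ is a fixed small positive number. Writing every vertex of $\tau$ as $\alpha = 2\beta$ with $\beta \in V(\tau')$ gives $P_t(x) = \sum_{\beta \in V(\tau')} \mu(2\beta)\, t^{\nu(2\beta)} (x^\beta)^2$; in particular, the polynomial obtained from $P$ by halving all exponents is the $T$-polynomial $Q_t(x) = \sum_{\beta \in V(\tau')} \mu(2\beta)\, t^{\nu(2\beta)}\, x^\beta$ defining $B$, whose patchwork datum $(\tau', \mu'')$ — with $\mu''(\beta) = \mu(2\beta)$ and convex function $\beta \mapsto \nu(2\beta)$ — is exactly the dilation by $\tfrac12$ of the patchwork datum of $A$.

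Next, fix a subset $I \subseteq \{1,\dots,n\}$ and let $s_I$ be the composition of the reflections with respect to the hyperplanes $\{x_i = 0\}$, $i \in I$; the corresponding sibling of $A$ has real part the fixed locus of $z \mapsto s_I(\bar z)$ on $\mathbb{C}A$. Conjugating this involution to the standard complex conjugation by the diagonal automorphism $\phi_I$ of $\mathbb{CP}^n$ that fixes $z_i$ for $i \notin I$ and sends $z_i$ to $-\sqrt{-1}\,z_i$ for $i \in I$, one sees that the sibling is defined by $P'_t := P_t \circ \phi_I^{-1}$. Since $\phi_I^{-1}$ multiplies $z_i$ by $\sqrt{-1}$ for $i \in I$, each squared monomial transforms by $x^{2\beta} \mapsto \big(\prod_{i \in I}(\sqrt{-1})^{2\beta_i}\big)\, x^{2\beta} = (-1)^{\sum_{i \in I}\beta_i}\, x^{2\beta}$, so
\begin{equation*}
P'_t(x) = \sum_{\beta \in V(\tau')} (-1)^{\sum_{i \in I}\beta_i}\, \mu(2\beta)\, t^{\nu(2\beta)}\, x^{2\beta}.
\end{equation*}
This has Newton polytope $\Delta_m^n$ and the same convex function $\nu$ as $P_t$, hence $\tau(P'_t) = \tau(P_t) = \tau = 2\tau'$, which gives $P'_t \in TP^n$ together with the first asserted property; shrinking $t$ if necessary so that the patchworking theorem applies simultaneously to the $2^n$ sign distributions arising this way (which changes neither $A$ nor its siblings up to homeomorphism) makes $P'_t$ a genuine $T^2$-polynomial defining the sibling.

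It remains to identify the sign distribution $\mu'$ of $P'_t$, namely $\mu'(2\beta) = (-1)^{\sum_{i\in I}\beta_i}\,\mu(2\beta)$, with the claimed one. Extend the patchwork datum $(\tau',\mu'')$ of $B$ to a symmetric datum on $(\Delta_{\frac{m}{2}}^n)^*$ and restrict it to the closed orthant $s_I((\R_{\geq 0})^n)$: its vertices are the points $s_I(\beta)$, $\beta \in V(\tau')$, and the extension rule assigns to $s_I(\beta)$ the sign $(-1)^{\#\{i \in I\,:\,\beta_i\text{ odd}\}}\,\mu''(\beta) = (-1)^{\sum_{i\in I}\beta_i}\,\mu(2\beta) = \mu'(2\beta)$. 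Hence, identifying $V(\tau(P'))$ with the vertex set of this restricted datum via $\alpha = 2\beta \mapsto \tfrac12 s_I(\alpha) = s_I(\beta)$, the two sign distributions coincide, which is the second asserted property. The step I expect to be the main obstacle is precisely this reconciliation: one must carefully match the power of $\sqrt{-1}$ produced by the coordinate change $\phi_I$, which depends on $\sum_{i\in I}\beta_i$ modulo $2$, with the combinatorial sign rule of patchworking, which reads off for each $i \in I$ the parity of the distance $\beta_i$ of the vertex to the hyperplane $\{x_i = 0\}$ — these agree because the parity of a sum of integers equals the number of odd summands modulo $2$ — and one must be deliberate about the vertex correspondence $\alpha \mapsto \tfrac12 s_I(\alpha)$ between $\tau(P')$, which lies in $\Delta_m^n$, and the datum of $B$ restricted to the orthant $s_I((\R_{\geq 0})^n)$.
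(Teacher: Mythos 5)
Your proof is correct and follows essentially the same route as the paper: the paper also conjugates the involution $conj\circ\delta$ to the standard conjugation by multiplying the suitable coordinates by $i$ and observes that the resulting polynomial is again a $T^2$-polynomial with the required datum. You simply carry out explicitly the sign bookkeeping (the factor $(-1)^{\sum_{i\in I}\beta_i}$ and its match with the parity rule of the symmetric sign extension) that the paper leaves implicit; the small caveat about shrinking $t$ is harmless and does not affect the argument.
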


\begin{proof}
    Let us restrict to $(\mathbb{C}^*)^n$. Choose a composition $\delta$ of reflections with respect to coordinate hyperplanes. The antiholomorphic involution $conj \:\circ \delta$ on $(\mathbb{C}^*)^n$ has as real part a subspace of real dimension $n$. 
    The coordinates of the points of this subspace are either real or pure imaginary. 
    We make a change of variables by multiplying the suitable coordinates by $i$. The polynomial expressed in these new variables is again a $T^2$-polynomial, that satisfies the desired conditions.
\end{proof}

\begin{prop}
    \textbf{(Application of \cite{bertrandthese})} If an even-dimensional real algebraic hypersurface $A$ of degree $m$ in $\P^n$ is obtained by combinatorial patchworking starting with a triangulation $\tau$ that is the dilation by $2$ of a primitive triangulation, then $\sum_{S \in \mathcal{F}_A} \chi(S) = 2^n \sigma(\C B)$, where $B$ is a generic real algebraic hypersurface of degree $\frac{m}{2}$ in $\P^n$.
\end{prop}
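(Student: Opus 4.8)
The plan is to reduce everything to the primitive case via the square-root construction. Let $P$ be a $T$-polynomial defining $A$, put $\tau = \tau(P)$, and let $\tau_0$ be the primitive triangulation of $\Delta_{m/2}^n$ with $2\tau_0 = \tau$; precomposing a convex piecewise-linear function certifying the convexity of $\tau$ with the dilation by $2$ shows that $\tau_0$ is convex, so together with the sign datum $\mu_0$ obtained by dilating by $\frac{1}{2}$ the sign datum of $P$ it defines, by Viro's theorem, a non-singular $T$-hypersurface $B_0$ of degree $\frac{m}{2}$ in $\P^n$. Since $n-1$ is even, $\sigma(\C B_0)$ is defined and equals $\sigma(\C B)$ for $B$ as in the statement, the signature being a topological invariant of the smooth degree-$\frac{m}{2}$ hypersurface of $\P^n$. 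As $\tau_0$ is primitive and $n-1$ is even, \cite{bertrand_2010} gives $\sigma(\C B_0) = \chi(\R B_0)$. Hence it suffices to prove the topological identity $\sum_{S\in\mathcal F_A}\chi(\R S) = 2^n\,\chi(\R B_0)$.

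To establish this identity I would exploit the symmetry of $SQM$-hypersurfaces recalled in Section \ref{section_sibling_hypersurf}. By the preceding lemma each sibling $S$ is defined by a $T^2$-polynomial $P_S$ with $\tau(P_S)=\tau$ whose sign datum is the dilation by $\frac{1}{2}$ of the restriction of the symmetrized datum of $P$ to one of the $2^n$ orthants $\varepsilon\in\{\pm1\}^n$; comparing monomials (all exponents of $P_S$ being even) shows that $\R S\cap(\R^*)^n$ is the preimage of $\R B_0$ under the $2^n$-to-$1$ map $(\R^*)^n \to \{y\in(\R^*)^n:\mathrm{sign}(y_i)=\varepsilon_i\}$, $x\mapsto(\varepsilon_1 x_1^2,\dots,\varepsilon_n x_n^2)$. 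Now decompose $\R\P^n$ into its real torus orbits. For an orbit $T$ of dimension $d$, the trace $\R A\cap T$ is again an $SQM$-hypersurface whose square root is $\R B_0\cap T'$, where $T'$ is the corresponding orbit in the degree-$\frac{m}{2}$ picture; two siblings agreeing on the $d$ coordinates not vanishing along $T$ have the same trace on $T$, and each of the $2^d$ distinct traces covers $2^d$-to-$1$ the part of $\R B_0\cap T'$ lying over one of the $2^d$ connected components of $T'$. Therefore $\sum_{S}\chi(\R S\cap T) = 2^{n-d}\cdot 2^d\,\chi(\R B_0\cap T') = 2^n\,\chi(\R B_0\cap T')$. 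Summing over all orbits and using additivity of the Euler characteristic with compact supports over this semialgebraic stratification (it agrees with the usual Euler characteristic on the compact sets $\R S$ and $\R B_0$) gives $\sum_{S}\chi(\R S) = 2^n\chi(\R B_0)$, which finishes the argument.

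The argument above is essentially the one carried out in \cite{bertrandthese}, so the proposition may also be obtained by invoking it directly. The step requiring the most care is the orbit-by-orbit bookkeeping: one must verify that the sign datum of $P_S$ restricted to a face of $\Delta_m^n$ is again the dilation by $\frac{1}{2}$ of the symmetrized datum of the corresponding face of $\tau_0$ — this is exactly the compatibility used in the proof of the preceding lemma — and that the trace of $B_0$ on a torus orbit is the square root of the trace of $A$, which holds because both are patchworked from the restrictions of $\tau_0$ and $\tau=2\tau_0$ to the same face. Apart from this, the proof only uses Viro's theorem, the convexity of $\tau_0$, additivity of the compactly supported Euler characteristic, and the primitive case \cite{bertrand_2010}.
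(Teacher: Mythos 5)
Your proposal is correct and takes essentially the route the paper intends: the paper states this proposition without proof as a consequence of the square-root/sibling picture described just before it, namely that the primitive triangulation $\frac{1}{2}\tau$ patchworks a hypersurface $B_0$ of degree $\frac{m}{2}$ with $\chi(\R B_0)=\sigma(\C B_0)=\sigma(\C B)$ by Bertrand's result, while the sibling real parts assemble $2^n$ copies of the orthant pieces of $\R B_0$. Your torus-orbit bookkeeping with the compactly supported Euler characteristic simply makes explicit why the resulting identity $\sum_{S\in\mathcal{F}_A}\chi(\R S)=2^n\chi(\R B_0)$ is exact rather than only asymptotic, which is consistent with the paper.
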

\begin{prop}
    \textbf{(Application of \cite{renaudineau})} If a real algebraic hypersurface $A$ of degree $m$ in $\P^n$ is obtained by combinatorial patchworking starting with a triangulation $\tau$ that is the dilation by two of a primitive triangulation, then for any integer $i$, one has
    \begin{equation*}
        \sum_{S \in \mathcal{F}_A} b_i(\R S) \customleq h^{i,n-1-i}(\C A).
    \end{equation*}
\end{prop}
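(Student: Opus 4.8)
The plan is to reduce the statement to the version of Renaudineau's theorem for primitive patchworkings recalled in the introduction, applied not to $A$ itself (whose triangulation is not primitive) but to the degree-$\frac{m}{2}$ hypersurface $B$ obtained from $A$ by halving all exponents, and then to transfer the resulting bound from $B$ to the siblings of $A$ via the orthant decomposition of Section~\ref{section_sibling_hypersurf}.

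First I would check that $B$ is itself obtained by primitive combinatorial patchworking in $\P^n$. Writing $\tau(P) = 2\tau'$ with $\tau'$ a primitive triangulation of $\Delta^n_{m/2}$ and letting $\nu:\Delta^n_m \to \R$ be a convex function realizing $\tau(P)$, the function $x \mapsto \nu(2x)$ is convex on $\Delta^n_{m/2}$, takes integer values on $V(\tau')$, and has the $n$-simplices of $\tau'$ as its domains of linearity; hence $\tau'$ is convex, and the polynomial obtained from $P$ by dividing all exponents by $2$ is a $T$-polynomial with triangulation $\tau'$. Since $\tau'$ is primitive, Renaudineau's theorem yields $b_i(\R B) \le h^{i,n-1-i}(\C B) + 1$ for every integer $i$ (with no ``$+1$'' in the middle dimension, which only helps), and $B$ is nonsingular.

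Next I would use that $\R B \subset \R\P^n$ is the union of its $2^n$ intersections with the closed coordinate orthants, glued along coordinate hyperplanes, and that the real part of each sibling $S \in \mathcal{F}_A$ is likewise a gluing of $2^n$ copies of one such orthant piece. Running over the $2^n$ siblings, the disjoint union $\bigsqcup_{S \in \mathcal{F}_A} \R S$ therefore amounts to $2^n$ copies of every orthant piece of $\R B$. An iterated Mayer--Vietoris bookkeeping then gives $\sum_{S \in \mathcal{F}_A} b_i(\R S) \customeq 2^n b_i(\R B)$: the discrepancy is controlled by the Betti numbers of the various gluing loci, which are real algebraic varieties of dimension at most $n-2$ cut out by polynomials of degree at most $\frac{m}{2}$, so each contributes only $O(m^{n-1})$, and there are boundedly many of them. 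This is the same accounting already used for the Smith--Thom and Petrovskii--Oleinik applications, now carried out degree by degree.

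Finally I would compare Hodge numbers: for a smooth hypersurface of degree $d$ in $\P^n$, each $h^{i,n-1-i}$ is a polynomial in $d$ of degree $n$ whose coefficients depend only on $i$ and $n$, so from $\deg(\C A) = m$ and $\deg(\C B) = \frac{m}{2}$ we get $2^n h^{i,n-1-i}(\C B) \customeq h^{i,n-1-i}(\C A)$. Chaining the three relations,
\begin{equation*}
\sum_{S \in \mathcal{F}_A} b_i(\R S) \customeq 2^n b_i(\R B) \customleq 2^n h^{i,n-1-i}(\C B) \customeq h^{i,n-1-i}(\C A),
\end{equation*}
which is the claim; for $i \notin \{0,\dots,n-1\}$ both sides vanish. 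I expect the only genuinely delicate point to be the degree-by-degree sibling estimate: one must make sure that the gluing loci, and all the connecting homomorphisms appearing in the iterated Mayer--Vietoris sequences, really do not contribute at order $m^n$, which is exactly what the dimension count above guarantees, the leading term being carried entirely by the orthant pieces.
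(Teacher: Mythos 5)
Your proposal is correct and follows essentially the same route the paper intends: the proposition is stated as a direct application, relying on the orthant/sibling decomposition of Section \ref{section_sibling_hypersurf} to transfer the bound of \cite{renaudineau} for the primitively patchworked half-degree hypersurface $B$ (whose triangulation is convex and primitive, exactly as you check) to $\sum_{S \in \mathcal{F}_A} b_i(\R S) \customeq 2^n b_i(\R B)$, with the gluing loci contributing only $O(m^{n-1})$. Your final comparison $2^n h^{i,n-1-i}(\C B) \customeq h^{i,n-1-i}(\C A)$ matches the paper's asymptotic bookkeeping (which it elsewhere performs via Danilov--Khovanskii lattice-point formulas), so no gap remains.
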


\begin{rmk}
    In the case $n=3$, the first statement can be adapted to dilations of maximal triangulations by replacing "$=$" with "$\geq$". It follows from I. Itenberg's result on maximal triangulations (\cite{tsurf}).
\end{rmk}

\section{Triangulations and simplex vision}

\label{section_triangulations}

\subsection{Number of simplices of each dimension}

\subsubsection{General case}

\begin{sloppypar}
    Let $\tau$ be a triangulation with integer vertices of $\Delta_m^n$. We want to compute the number of $i$-simplices of $\tau$ whose relative interiors are contained in the interior of $\Delta_m^n$. This number is denoted by $s_i(\tau)$. In this section, given a polytope $\Pi \subset \R^n$, we denote by $l^*(\Pi)$ the number of integer points that lie in the relative interior of $\Pi$.
\end{sloppypar}

\begin{lem}
    \label{lem_maximal_simplices}
    Let $\sigma$ be an $i$-dimensional simplex with integer vertices in $\mathbb{R}^m$. Suppose that $\sigma$ does not contain any integer point other than its vertices. Then, for any positive integer $k$, one has that $l^*(k \sigma) \geq {k-1 \choose i}$.
\end{lem}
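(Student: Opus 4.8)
The plan is to prove the inequality by exhibiting $\binom{k-1}{i}$ explicit and distinct integer points in the relative interior of $k\sigma$; in fact the hypothesis that $\sigma$ contains no integer point besides its vertices will not even be needed for this particular bound. Write $v_0,\dots,v_i$ for the vertices of $\sigma$: these are $i+1$ affinely independent points of $\mathbb{Z}^m$. Recall that a point of $\R^m$ lies in $k\sigma$ if and only if it can be written as $\sum_{j=0}^i \lambda_j v_j$ with $\lambda_j \geq 0$ for all $j$ and $\sum_{j=0}^i \lambda_j = k$, that it lies in the relative interior of $k\sigma$ if and only if, moreover, $\lambda_j > 0$ for all $j$, and that by affine independence of $v_0,\dots,v_i$ these barycentric coefficients $\lambda_j$ are uniquely determined by the point.

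For every tuple $\mathbf{a} = (a_0,\dots,a_i)$ of positive integers with $a_0 + \dots + a_i = k$, set $w_{\mathbf{a}} := \sum_{j=0}^i a_j v_j$. First, $w_{\mathbf{a}} \in \mathbb{Z}^m$, being an integer combination of the integer points $v_j$. Second, its barycentric coefficients with respect to $k\sigma$ are the $a_j$, which are nonnegative and sum to $k$, so $w_{\mathbf{a}} \in k\sigma$, and they are in fact strictly positive, so $w_{\mathbf{a}}$ lies in the relative interior of $k\sigma$. Third, the assignment $\mathbf{a} \mapsto w_{\mathbf{a}}$ is injective: if $w_{\mathbf{a}} = w_{\mathbf{b}}$, then uniqueness of barycentric coefficients forces $\mathbf{a} = \mathbf{b}$. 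Hence $l^*(k\sigma)$ is at least the number of such tuples $\mathbf{a}$.

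Finally, the number of tuples of positive integers $(a_0,\dots,a_i)$ with $a_0 + \dots + a_i = k$ equals, after the substitution $b_j = a_j - 1$, the number of tuples of nonnegative integers $(b_0,\dots,b_i)$ with $b_0 + \dots + b_i = k - (i+1)$, which is $\binom{k-1}{i}$ by the classical stars-and-bars count (and is $0$ when $k \le i$, consistently with $\binom{k-1}{i} = 0$ in that range). This gives $l^*(k\sigma) \geq \binom{k-1}{i}$, as desired. The only points demanding a little care are the description of the relative interior of a dilated simplex via strictly positive barycentric coordinates and the distinctness of the $w_{\mathbf{a}}$, both of which follow at once from the affine independence of the vertices of $\sigma$; I do not expect any genuine obstacle beyond that.
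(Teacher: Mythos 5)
Your proof is correct, and it follows a genuinely different route from the paper. You exhibit the $\binom{k-1}{i}$ interior lattice points of $k\sigma$ directly, as the combinations $\sum_j a_j v_j$ with positive integer barycentric coefficients summing to $k$; affine independence of the vertices gives both membership in the relative interior and injectivity, and stars-and-bars does the counting. The paper instead argues by induction on the dimension: it treats $i=1,2$ via primitivity (using Pick-type considerations, since empty segments and triangles are primitive), and for the inductive step it slices $k\sigma$ by $k-1$ hyperplanes parallel to a dilated facet $k\hat{\sigma}$, passing through the interior lattice points of a dilated edge, so that the slices contain integer translates of $\hat{\sigma}, 2\hat{\sigma},\dots,(k-1)\hat{\sigma}$ and the identity $\sum_{j=1}^{k-1}\binom{j-1}{N}=\binom{k-1}{N+1}$ closes the induction. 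Your argument is shorter, avoids the induction and the geometric bookkeeping about disjoint slices, and — as you observe — does not use the hypothesis that $\sigma$ is empty at all, so it proves the bound for an arbitrary lattice $i$-simplex; this is consistent with the paper's remark that the minimum $\binom{k-1}{i}$ is attained by primitive simplices (note that in dimension $\geq 3$ empty simplices need not be primitive, e.g.\ Reeve simplices, which is precisely why the paper's base-case reasoning cannot be pushed further and an induction is invoked there). The paper's slicing argument yields a decomposition of the interior points by parallel hyperplane sections, but for the stated inequality your direct construction is fully sufficient and arguably cleaner.
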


\begin{proof}
    If $i=1$ or $i=2$, the statement is clear, as edges and triangles that correspond to the description above are necessarily primitive. Suppose that the statement is true for every integer $i \leq N$. We prove the statement for $i = N+1$. Choose an $N$-face $\hat{\sigma}$ of $\sigma$ and an edge $\check{\sigma}$ of $\sigma$ that is not an edge of $\hat{\sigma}$. Subdivide $k\sigma$ by $k-1$ hyperplanes parallel to $k\hat{\sigma}$, each passing through a different integer point in the relative interior of $k\check{\sigma}$. The intersection of the interior of $k\sigma$ with the hyperplanes is the disjoint union of translations by integer vectors of $\hat{\sigma}$, $2\hat{\sigma}$, ..., $(k-1)\hat{\sigma}$. Now the result follows from the statement in dimension $N$: we have that 
    \begin{equation*}
        l^*(k\sigma) \geq \sum_{j=1}^{k-1} l^*(j \hat{\sigma}) \geq \sum_{j=1}^{k-1} {j-1 \choose N} = {k-1 \choose N+1}.    
    \end{equation*}
\end{proof}

\begin{rmk}
    The minimum number for each $l^*(k \sigma)$ is realized if $\sigma$ is primitive.
\end{rmk}

\begin{cor}
    \label{corollary_number_simplices_number_points}
    Let $\tau$ be a triangulation with integer vertices of $\Delta_m^n$. Then one has $l^*(k\Delta_m^n) \geq \sum_{i=0}^{k-1} {k-1 \choose i} s_i(\tau)$.
\end{cor}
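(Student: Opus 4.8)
The plan is to sum the previous lemma (Lemma \ref{lem_maximal_simplices}) over all simplices of $\tau$, being careful about how interior points of $k\Delta_m^n$ are distributed among the dilated faces. First I would note that since $\tau$ is a triangulation of $\Delta_m^n$, the dilation $k\tau$ (dilate every simplex of $\tau$ by the factor $k$) is a subdivision of $k\Delta_m^n$ whose simplices are exactly the $k\sigma$ for $\sigma$ a simplex of $\tau$. Every integer point of $k\Delta_m^n$ lies in the relative interior of exactly one simplex of this dilated complex — namely the unique simplex of $k\tau$ containing it in its relative interior. Restricting to integer points that lie in the \emph{interior} of $k\Delta_m^n$: each such point lies in the relative interior of $k\sigma$ for a unique simplex $\sigma$ of $\tau$ whose relative interior is contained in the interior of $\Delta_m^n$ (a face of $\tau$ meeting $\partial \Delta_m^n$ dilates to a face meeting $\partial(k\Delta_m^n)$, so its relative interior cannot contain an interior lattice point). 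This gives the exact decomposition
\begin{equation*}
l^*(k\Delta_m^n) = \sum_{\sigma} l^*(k\sigma),
\end{equation*}
where the sum runs over all simplices $\sigma$ of $\tau$ (of all dimensions $0 \le i \le n$) with $\mathrm{relint}(\sigma) \subset \mathrm{int}(\Delta_m^n)$.

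Next I would apply Lemma \ref{lem_maximal_simplices} to each term. A simplex $\sigma$ of a triangulation with integer vertices of $\Delta_m^n$ contains no integer point other than its vertices (this is part of what it means to be a simplex of a lattice triangulation; if $\tau$ is not assumed maximal one still has that the simplices are lattice simplices, but one should check the hypothesis of Lemma \ref{lem_maximal_simplices} is met — in the intended application $\tau$ will be such that its simplices have no non-vertex lattice points, e.g.\ primitive or at least "empty" simplices; for a general lattice triangulation the inequality $l^*(k\sigma)\ge \binom{k-1}{i}$ may fail, so either the corollary tacitly assumes empty simplices or one restricts attention to that case). Assuming the hypothesis holds, Lemma \ref{lem_maximal_simplices} gives $l^*(k\sigma) \ge \binom{k-1}{i}$ for each $i$-dimensional $\sigma$ with relative interior inside $\mathrm{int}(\Delta_m^n)$. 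Grouping the sum by dimension and using the definition of $s_i(\tau)$ as the number of such $i$-simplices yields
\begin{equation*}
l^*(k\Delta_m^n) = \sum_{i=0}^{n} \sum_{\substack{\sigma \text{ an } i\text{-simplex} \\ \mathrm{relint}(\sigma)\subset \mathrm{int}(\Delta_m^n)}} l^*(k\sigma) \ \ge\ \sum_{i=0}^{n} \binom{k-1}{i} s_i(\tau).
\end{equation*}
Finally, since $\binom{k-1}{i} = 0$ for $i \ge k$, the sum effectively terminates at $i = k-1$, giving $l^*(k\Delta_m^n) \ge \sum_{i=0}^{k-1}\binom{k-1}{i} s_i(\tau)$, which is the claim. (One could also keep the upper index at $n$; the stated form just drops the vanishing terms.)

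The main obstacle I anticipate is not the counting identity itself, which is essentially bookkeeping, but making sure the hypothesis of Lemma \ref{lem_maximal_simplices} is legitimately available: the lemma requires each simplex to have no interior lattice points besides its vertices, and a general lattice triangulation of $\Delta_m^n$ need not have this property. I would either add this as a standing assumption (it holds automatically in the cases of interest — primitive triangulations, and more generally the triangulations appearing for $T^2$-hypersurfaces, have empty simplices) or remark that for non-empty simplices one can first refine each simplex to an empty-simplex triangulation, which only increases each $s_i$, so the inequality is preserved. The other minor point to verify carefully is the claim that the relative interior of a face of $\tau$ touching $\partial\Delta_m^n$ dilates to a face whose relative interior avoids $\mathrm{int}(k\Delta_m^n)$; this follows because dilation by $k$ is a homothety fixing the origin (a vertex of $\Delta_m^n$) and maps the boundary to the boundary, but it is worth stating explicitly so that the decomposition of $l^*(k\Delta_m^n)$ is exact rather than merely a lower bound.
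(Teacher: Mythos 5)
Your proposal is correct and, once you invoke your own fallback fix, it is essentially the paper's proof: the paper handles the hypothesis of Lemma \ref{lem_maximal_simplices} exactly as you suggest, by first refining $\tau$ to a maximal triangulation (whose simplices contain no non-vertex lattice points and whose $s_i$ are at least those of $\tau$) and then summing the lemma over dilated simplices with relative interior inside the interior of $k\Delta_m^n$. Just note that your initial claim that simplices of an arbitrary lattice triangulation are automatically empty is false, so the refinement step is not optional but the actual argument.
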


\begin{proof}
    Any triangulation $\tau$ can be refined into a maximal triangulation $\tau'$, which necessarily contains at least the same number of simplices of each dimension. Each $i$-simplex of the dilation by $k$ of $\tau'$ and whose relative interior is a subset of the interior of $k\Delta_m^n$ contains at least ${k-1 \choose i}$ integer points by Lemma \ref{lem_maximal_simplices}. The result follows.
\end{proof}

\subsubsection{Primitive and maximal triangulations}

\begin{lem}
    Let $\tau$ be a primitive triangulation of $\Delta_m^n$. Let $k$ be an integer such that $1 \leq k \leq n+1$. One has $s_{k-1}(\tau) = \sum_{j=1}^k (-1)^{j+k} {k-1 \choose j-1} l^*(j \Delta_m^n)$.
\end{lem}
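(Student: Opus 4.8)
The plan is to upgrade the inequality of Corollary \ref{corollary_number_simplices_number_points} to an \emph{equality} in the primitive case, and then to invert the resulting unitriangular linear system by binomial inversion.

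First I would prove that for a primitive triangulation $\tau$ of $\Delta_m^n$ one has, for every positive integer $k$,
$l^*(k\Delta_m^n) = \sum_{i=0}^{n} \binom{k-1}{i}\, s_i(\tau)$
(the terms with $i \geq k$ vanishing since $\binom{k-1}{i}=0$ there). The dilation $k\tau$ is a triangulation of $k\Delta_m^n$ with integer vertices, whose faces are exactly the dilations $k\sigma$ of the faces $\sigma$ of $\tau$. Every integer point of the interior of $k\Delta_m^n$ lies in the relative interior of a unique face of $k\tau$; moreover each face $\sigma$ of $\tau$ is either contained in $\partial\Delta_m^n$ or has its relative interior inside the interior of $\Delta_m^n$ (the triangulation induced on each proper face of $\Delta_m^n$ is a subcomplex of $\tau$, and the relative interiors of the faces of a simplicial complex are pairwise disjoint), and the same dichotomy holds for the faces of $k\tau$ by scaling. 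Hence the face of $k\tau$ carrying a given interior integer point in its relative interior is $k\sigma$ for one of the $s_i(\tau)$-many faces $\sigma$ counted by $s_i(\tau)$, and therefore $l^*(k\Delta_m^n) = \sum_{\sigma} l^*(k\sigma)$, the sum running over those faces.

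Next I would compute $l^*(k\sigma)$ for each such $\sigma$ using primitivity. Since $\tau$ is primitive, the edge vectors emanating from any vertex of an $n$-simplex of $\tau$ form a $\Z$-basis of $\Z^n$; a short linear-algebra argument — changing coordinates so that the $n$-simplex becomes $\mathrm{conv}(0,e_1,\dots,e_n)$, then distinguishing whether a chosen vertex of a given face is the origin or not and using that differences of distinct standard basis vectors generate the corresponding ``$A_d$-type'' sublattice — shows that every face $\sigma$ of $\tau$ is itself a unimodular (primitive) simplex. Pulling $\sigma$ back by an affine unimodular transformation to the standard $(\dim\sigma)$-simplex, a stars-and-bars count (interior lattice points of $k$ times the standard $i$-simplex correspond to nonnegative integer solutions of $y_1+\dots+y_i \leq k-1-i$) gives $l^*(k\sigma) = \binom{k-1}{\dim\sigma}$; this is the equality case of the bound in Lemma \ref{lem_maximal_simplices}, consistent with the remark following it. Substituting into the identity of the previous paragraph yields the claimed equality $l^*(k\Delta_m^n) = \sum_{i=0}^{n} \binom{k-1}{i}\, s_i(\tau)$.

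Finally, reading this for $k = 1,\dots,n+1$ exhibits the vector $\bigl(l^*(j\Delta_m^n)\bigr)_{j}$ as the binomial transform of $\bigl(s_i(\tau)\bigr)_i$. The standard binomial inversion formula then gives $s_{k-1}(\tau) = \sum_{i=0}^{k-1} (-1)^{k-1-i}\binom{k-1}{i}\, l^*\bigl((i+1)\Delta_m^n\bigr)$, and reindexing by $j = i+1$, together with $(-1)^{k-j} = (-1)^{k+j}$, produces exactly $s_{k-1}(\tau) = \sum_{j=1}^{k} (-1)^{j+k}\binom{k-1}{j-1}\, l^*(j\Delta_m^n)$. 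The only genuinely substantive point is the equality upgrade — specifically the fact that every face of a simplex in a primitive triangulation is unimodular, which is what makes Lemma \ref{lem_maximal_simplices} tight face by face; the remainder is bookkeeping about relative interiors of faces and a classical inversion.
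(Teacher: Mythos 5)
Your proposal is correct and follows essentially the same route as the paper: both rest on the key identity $l^*(k\Delta_m^n)=\sum_i \binom{k-1}{i}\,s_i(\tau)$ for primitive triangulations (the equality case of Lemma \ref{lem_maximal_simplices}, which the paper asserts via the remark on primitive simplices and you prove in more detail, including the unimodularity of faces), followed by inverting the resulting triangular system. The only cosmetic difference is that you invoke standard binomial inversion where the paper performs the inversion by an explicit induction on $k$ with a binomial-coefficient computation.
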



\begin{proof}
    One has

\begin{equation*}
    l^*(k\Delta_m^n) = \sum_{i=0}^{k-1} {k-1 \choose i} s_i(\tau),
\end{equation*}

\begin{equation}
    \label{equation_nombre_simplexes}
    s_{k-1}(\tau) = l^*(k\Delta_m^n) - \sum_{i=0}^{k-2} {k-1 \choose i} s_i(\tau).
\end{equation}

    Suppose that for any $i\leq k-2$, we have $s_i(\tau) = (-1)^i\sum_{j=0}^i (-1)^j {i \choose j} l^*((j+1)\Delta_m^n)$ (this is clearly true if $i=0$). We want to express $s_{k-1}(\tau)$ as a linear combination of $l^*(\Delta_m^n)$, $l^*(2\Delta_m^n)$, ..., $l^*(k\Delta_m^n)$. If $j \neq k$, the coefficient in front of $l^*(j\Delta_m^n)$ in equation (\ref{equation_nombre_simplexes}) is equal to
    \begin{equation*}
        - \sum_{i = j-1}^{k-2} (-1)^i (-1)^{j-1} {k-1 \choose i} {i \choose j-1} = (-1)^{j+k} {k-1 \choose j-1}.
    \end{equation*}
    
    Hence, $s_{k-1}(\tau) = \sum_{j=1}^k (-1)^{j+k} {k-1 \choose j-1} l^*(j \Delta_m^n)$.
\end{proof}


\begin{rmk}
    If $\tau$ is a maximal triangulation, we have $s_0(\tau) = l^*(\Delta_m^n)$ and $l^*(2\Delta_m^n) \geq s_0(\tau) + s_1(\tau) = l^*(\Delta_m^n) + s_1(\tau)$, which means that $s_1(\tau) \leq l^*(2\Delta_m^n) - l^*(\Delta_m^n)$.
\end{rmk}

\begin{rmk}
    \label{rmk_nombre_segments}
    Any triangulation $\tau$ can be refined into a maximal triangulation. Hence, we have $s_0(\tau) \leq l^*(\Delta_m^n)$ and $s_1(\tau) \leq l^*(2\Delta_m^n) - l^*(\Delta_m^n)$.
\end{rmk}

\subsubsection{The special case of dimensions $5$ and $6$}

\begin{lem}
    \label{lemma_dehn_sommerville}
    Let $n$ be $5$ or $6$. For any positive integers $m$ and $n$, for any triangulation $\tau$ of $\Delta_m^n$, one has $s_2(\tau) \customleq s_2(m)$, where $s_2(m)$ is the number of triangles in a primitive triangulation of $\Delta_m^n$ and whose relative interior is contained in the interior of $\Delta_m^n$.
\end{lem}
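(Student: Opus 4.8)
The plan is to reduce the problem to a triangulation of a sphere and apply the Dehn--Sommerville relations, which in dimensions $5$ and $6$ leave the number $f_2$ of $2$-faces almost determined by the numbers $f_0$, $f_1$ of vertices and edges together with the number $f_n$ of top simplices. Since the right-hand side $s_2(m)$ does not depend on $\tau$, and since (as in the proof of Corollary~\ref{corollary_number_simplices_number_points}) refining a triangulation cannot decrease the number of $i$-simplices whose relative interior lies in the interior of $\Delta_m^n$, I may assume that $\tau$ is maximal, so that $s_0(\tau) = l^*(\Delta_m^n)$. Let $M$ be the double of the triangulated ball $(\Delta_m^n,\tau)$ along its boundary, which is a triangulation of $S^n$. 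Writing $f_i(\,\cdot\,)$ for the number of $i$-dimensional faces of a simplicial complex, one has $f_i(M) = 2 s_i(\tau) + f_i^\partial$, where $f_i^\partial$ is the number of $i$-faces of $\tau$ contained in $\partial\Delta_m^n$; as $\partial\Delta_m^n$ carries only $O(m^{n-1})$ simplices with an implied constant depending only on $n$, this gives $f_i(M) \customeq 2 s_i(\tau)$ for every $i$.

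The Dehn--Sommerville equations together with Euler's relation pin down the $f$-vector of a triangulated $n$-sphere up to at most three free parameters when $n \le 6$, and a direct manipulation of these relations expresses $f_2$ as an affine function of $f_0$, $f_1$ and $f_n$: one gets $f_2 = f_5 + 2 f_1 - 2 f_0$ for $n = 5$ and $2 f_2 = f_6 + 8 f_1 - 20 f_0 + 40$ for $n = 6$ (which one can double-check on $\partial\Delta^{n+1}$ and on the double of a single $n$-simplex). Substituting $f_i(M) \customeq 2 s_i(\tau)$ and absorbing constants into the error term, this yields $s_2(\tau) \customeq s_n(\tau) + 2 s_1(\tau) - 2 s_0(\tau)$ for $n = 5$, and $2 s_2(\tau) \customeq s_n(\tau) + 8 s_1(\tau) - 20 s_0(\tau)$ for $n = 6$.

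It remains to feed in the a priori estimates: $s_0(\tau) = l^*(\Delta_m^n)$ by maximality, $s_1(\tau) \le l^*(2\Delta_m^n) - l^*(\Delta_m^n)$ by Remark~\ref{rmk_nombre_segments}, and $s_n(\tau) \le m^n$ since the $n$-simplices of $\tau$ have volume at least $\frac{1}{n!}$ and fill $\Delta_m^n$. In the relations above the coefficients of $s_n$ and $s_1$ are positive, while the only negative coefficient is attached to $s_0$, which is known exactly, so these substitutions produce genuine upper bounds: $s_2(\tau) \customleq m^5 + 2 l^*(2\Delta_m^5) - 4 l^*(\Delta_m^5)$ for $n = 5$, and $s_2(\tau) \customleq \tfrac12 m^6 + 4 l^*(2\Delta_m^6) - 14 l^*(\Delta_m^6)$ for $n = 6$. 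On the other hand, the case $k = 3$ of the formula for the interior face numbers of a primitive triangulation gives $s_2(m) = l^*(3\Delta_m^n) - 2 l^*(2\Delta_m^n) + l^*(\Delta_m^n)$; since $l^*(k\Delta_m^n) = \binom{km-1}{n}$ is a polynomial in $m$ of degree $n$ with leading coefficient $\frac{k^n}{n!}$, the degree-$n$ coefficient of the difference between $s_2(m)$ and the bound above vanishes in both cases (it amounts to $243 - 4\cdot 32 + 5 = 120$ for $n = 5$ and $729 - 6\cdot 64 + 15 = 360 = \tfrac12\cdot 720$ for $n = 6$). Hence $s_2(\tau) \customleq s_2(m)$.

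The crux, and the reason the statement is confined to $n \in \{5,6\}$, is precisely this last numerical coincidence: the method works only because $f_2$ is affine in $f_0$, $f_1$, $f_n$ with nonnegative coefficients on $f_1$ and $f_n$ (so that the one-sided bounds on $s_1$ and $s_n$ can be used), with $s_0$ the one quantity pinned down exactly and absorbing the negative coefficient, and because the leading term of the resulting bound matches that of the primitive value. For $n \ge 7$ the $f$-vector of an $n$-sphere acquires a fourth free parameter, $f_2$ is no longer a function of $f_0$, $f_1$, $f_n$ alone, and a different argument would be needed; keeping track of the lower-order boundary contributions $f_i^\partial$ is routine.
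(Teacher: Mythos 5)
Your proof is correct and follows the same basic strategy as the paper: close up $\Delta_m^n$ into a boundaryless $PL$-manifold, use the Dehn--Sommerville relations to express $s_2$ (asymptotically) as the affine combination $s_n+2s_1-2s_0$ for $n=5$ and $\tfrac12(s_n+8s_1-20s_0)$ for $n=6$, and then exploit that the negative coefficient sits on $s_0$, which is controlled by maximality. The differences are in execution, and they are worth noting. The paper works with the symmetrized complex $\widetilde{(\Delta_m^n)^*}$ (the same $\R\P^n$-type manifold used in patchworking, so the Euler-characteristic constant differs from your spherical one, harmlessly), and because it aims at the exact inequality $s_2^*\le s_2^p$ it handles non-maximal triangulations at the end, by showing that the affine combination itself is monotone under refinement --- this is where the ``each added vertex creates at least $6$ new edges'' argument in dimension $6$ comes in. You instead reduce to a maximal triangulation at the outset, using monotonicity of $s_2$ under refinement (which the paper itself uses in the proof of Corollary~\ref{corollary_number_simplices_number_points}); this neatly sidesteps that step. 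Likewise, where the paper concludes by comparing termwise with $s_1^p$, $s_n^p$ and reading off $s_2^p$ from the same Dehn--Sommerville identity applied to a primitive triangulation, you feed in $s_1\le l^*(2\Delta_m^n)-l^*(\Delta_m^n)$ (Remark~\ref{rmk_nombre_segments}), $s_n\le m^n$, and check by an explicit Ehrhart computation that the leading coefficient of the resulting bound equals that of $s_2(m)=l^*(3\Delta_m^n)-2l^*(2\Delta_m^n)+l^*(\Delta_m^n)$; your arithmetic is right in both dimensions. One small caveat: the double of $(\Delta_m^n,\tau)$ along its boundary need not be a simplicial complex in the strict sense, since a simplex of $\tau$ may have all its vertices on $\partial\Delta_m^n$ without being contained in $\partial\Delta_m^n$, so its two copies share a vertex set. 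This does not invalidate the argument --- the Dehn--Sommerville relations still hold for such a simplicial cell complex, as the usual link-counting proof only uses that links of faces are spheres --- but it should either be said explicitly or avoided by working with $\widetilde{(\Delta_m^n)^*}$ as the paper does.
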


\begin{proof}
    A triangulation $\tau$ of $\Delta_m^n$ induces a triangulation $\widetilde{\tau^*}$ of $\widetilde{(\Delta_m^n)^*}$, which is a compact $PL$-manifold without boundary. We denote by $s_k(\widetilde{\tau^*})$ the number of $k$-simplices of this triangulation. Hence, for any $k \in \{0,...,n \}$, the Dehn-Sommerville equation (see \cite{dehnsommerville} or e.g. the expository note \cite{dehnsommerville2}) for the triangulated manifold $\widetilde{(\Delta_m^n)^*}$ is
    \begin{equation*}
        s_k(\widetilde{\tau^*}) = \sum_{i=k}^{n} (-1)^{i+n} {i+1 \choose k+1} s_i(\widetilde{\tau^*})
    \end{equation*}
    
    The same relations are asymptotically valid for the numbers $s_k(\tau)$, as the number of simplices of each dimension lying in the copies of $\partial \Delta_m^n$ is bounded above by the evaluation at $m$ of a polynomial of degree $n-1$.

    \textbf{Case n = 5}: We write $s_k^*$ instead of $s_k(\widetilde{\tau^*})$ and we denote by $s_k^p$ the number of $k$-simplices in a primitive triangulation of $\widetilde{(\Delta_m^5)^*}$. The Dehn-Sommerville relations are
    \begin{equation*}
        \left\{\begin{array}{@{}l@{}}
            s_0^* = -s_0^* + 2s_1^* - 3s_2^* + 4s_3^* - 5s_4^* + 6s_5^*\\
            s_2^* = -s_2^* + 4s_3^* - 10 s_4^* + 20 s_5^*\\
            s_4^* = -s_4^* + 6s_5^*
        \end{array}\right.
    \end{equation*}
    We can reduce this system to obtain
    \begin{equation*}
        \left\{\begin{array}{@{}l@{}}
            -2s_0^* + 2s_1^* - s_2^* + s_5^* = 0\\
            -2s_2^* + 4s_3^* - 10 s_5^* = 0\\
            -s_4^* + 3s_5^* = 0
        \end{array}\right.
    \end{equation*}

    It is clear that $s_0^* \leq s_0^p$ and $s_5^* \leq s_5^p$. Furthermore, Remark \ref{rmk_nombre_segments} gives $s_1^* \leq s_1^p$. The first equation in our system is $s_2^* = -2s_0^* + 2s_1^* + s_5^*$. If the triangulation $\tau$ is maximal, then $s_0^* = s_0^p$ and it is clear that $-2s_0^* + 2s_1^* + s_5^* \leq -2s_0^p + 2s_1^p + s_5^p = s_2^p$. If $\tau$ is not maximal, then it can be refined into a maximal triangulation by successively adding the missing vertices. With each refinement step, the number of edges grows by at least one. Hence the quantity $-2s_0^* + 2s_1^* + s_5^*$ does not decrease when we refine the triangulation. We obtain $s_2^* \leq s_2^p$, which then gives $s_2(\tau) \customleqcinq s_2(m)$.

    Additionally, the second equation gives $4s_3^* = 2s_2^* + 10 s_5^*$. As $s_2^* \leq s_2^p$ and $s_5^* \leq s_5^p$, we obtain $4s_3^* = 2s_2^* + 10 s_5^* \leq 2s_2^p + 10 s_5^p = 4s_3^p$. Moreover, the second equation also gives $2s_3^* - s_2^* = 5s_5 \leq 5s_5^p = 2s_3^p -s_2^p$. Finally, the last equation directly gives $s_4^* \leq s_4^p$.

    \textbf{Case n = 6}: The Dehn-Sommerville relations are
    \begin{equation*}
        \left\{\begin{array}{@{}l@{}}
            1 = s_0^* - s_1^* + s_2^* - s_3^* + s_4^* - s_5^* + s_6^*\\
            s_1^* = -s_1^* + 3 s_2^* - 6 s_3^* + 10 s_4^* - 15 s_5^* + 21 s_6^*\\
            s_3^* = -s_3^* + 5 s_4^* - 15 s_5^* + 35 s_5^*\\
            s_5^* = -s_5^* + 7 s_6^*
        \end{array}\right.
    \end{equation*}
    We reduce the system to obtain
    \begin{equation*}
        \left\{\begin{array}{@{}l@{}}
            20 s_0^* - 8 s_1^* + 2 s_2^* - s_6^* = 20\\
            - 4 s_1^* + 6 s_2^* - 4 s_3^* + 7 s_6^* = 0\\
            - 4 s_3^* + 10 s_4^* - 35 s_6^* = 0\\
            -2 s_5^* + 7 s_6^* = 0
        \end{array}\right.
    \end{equation*}

    The first equation is $2s_2^* = -20 - 20 s_0^* + 8 s_1^* + s_6^*$. When refining the triangulation $\widetilde{\tau^*}$ by adding a vertex, the number of edges grows by at least $6$. Indeed, the added vertex is contained in at least one $6$-simplex, which means that the refined triangulation contains $7$ new edges, one for each vertex of the $6$-simplex. If the added point originally lies on an edge of the triangulation, then this edge will not be an edge of the refined triangulation. Hence the lower bound of $6$. Thus, $-20 - 20 s_0^* + 8 s_1^* + s_6^*$ grows with each refinement step, which gives $s_2^* \leq s_2^p$, as the inequality holds if $\tau$ is maximal.

    Additionally, substituting $6s_2^*$ with $- 60 - 60 s_0^* + 24 s_1^* + 3 s_6^*$ in the second equation, we get $4 s_3^* = -30 - 30 s_0^* + 10 s_1^* + 5 s_6^*$ and using the arguments above we obtain $s_3^* \leq s_3^p$.

    Then, the third equation $10 s_4^* = 4 s_3^* + 35 s_6^*$ and the last equation $2 s_5^* = 7 s_6^*$ directly give $s_4^* \leq s_4^p$ and $s_5^* \leq s_5^p$. We can also obtain $- 4 s_3^* + 10 s_4^* \leq - 4 s_3^p + 10 s_4^p$.

\end{proof}

\begin{rmk}
    In higher dimensions, the results are already much weaker: for instance, when $n=7$ we do not obtain $s_2^* \leq s_2^p$, but only $s_2^* - s_5^* + 6 s_7^* \leq s_2^p - s_5^p + 6 s_7^p$.
\end{rmk}

\subsection{Simplex vision}

\begin{sloppypar}
    Let $P$ be a $T^2$-polynomial of degree $m$ in $n$ variables and let $O$ be an integer point that is generic with respect to $\tau(P)$. Let $\sigma^k$ be a $k$-simplex of $\tau(P)$ whose relative interior lies in the interior of $\Delta_m^n$. Let $X$ be the barycenter of $\sigma^k$. There exists a positive real number $\epsilon$ and two distinct $n$-simplices $B_O(\sigma^k)$ and $F_O(\sigma^k)$ in the star of $\sigma^k$ such that, for all $t \in (0, \epsilon)$, the point $X + t \overrightarrow{OX}$ (respectively, $X - t \overrightarrow{OX}$) lies in the interior of $B_O(\sigma^k)$ (respectively, $F_O(\sigma^k)$). The simplex $B_O(\sigma^k)$ (respectively, $F_O(\sigma^k)$) is called the simplex \textit{behind} $\sigma^k$ (respectively, \textit{in front of} $\sigma^k$) with respect to $O$.
\end{sloppypar}

\begin{lem}
    \label{lemme_indices_devant_derriere}
    We have $i^O(B_O(\sigma^k)) \geq n-k$ and $i^O(F_O(\sigma^k))\leq k+1$.
\end{lem}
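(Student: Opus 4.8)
The plan is to prove both inequalities by a single local argument comparing each of the two $n$-simplices $B_O(\sigma^k)$ and $F_O(\sigma^k)$ with the hyperplanes spanned by its own $(n-1)$-faces. Recall that $B_O(\sigma^k)$ and $F_O(\sigma^k)$, lying in the star of $\sigma^k$, both contain $\sigma^k$ as a face; hence among their $n+1$ facets there are exactly $n-k$ that contain $\sigma^k$, namely those obtained by deleting one of the $n-k$ vertices not lying in $\sigma^k$, and these are pairwise distinct. Since $i^O$ counts $O$-visible facets, it suffices to show that these $n-k$ distinguished facets of $B_O(\sigma^k)$ are all $O$-visible, giving $i^O(B_O(\sigma^k)) \geq n-k$, and that the $n-k$ distinguished facets of $F_O(\sigma^k)$ are all $O$-non-visible, giving $i^O(F_O(\sigma^k)) \leq (n+1) - (n-k) = k+1$.

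For the first claim, fix a facet $G$ of $B := B_O(\sigma^k)$ containing $\sigma^k$. Then the barycenter $X$ of $\sigma^k$ lies on the hyperplane $H = \mathrm{aff}(G)$, and $O \notin H$ because $O$ is generic with respect to $\tau(P)$ while $G$ is an $(n-1)$-simplex of $\tau(P)$. Choose an affine functional $\ell$ vanishing on $H$ and positive on $\mathrm{int}(B)$. Since $X + t\overrightarrow{OX} \in \mathrm{int}(B)$ for small $t > 0$ and $\ell$ is affine with $\ell(X) = 0$, we get $\ell(X + t\overrightarrow{OX}) = t(\ell(X) - \ell(O)) = -t\,\ell(O) > 0$, whence $\ell(O) < 0$; that is, $O$ lies strictly on the side of $H$ opposite to $\mathrm{int}(B)$. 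The cone of vertex $O$ over $G$ is the convex hull $\mathrm{conv}(\{O\}\cup G)$, on which $\ell \leq 0$ since it is a convex combination of the value $\ell(O) < 0$ at $O$ and the value $0$ on $G$; as $\ell > 0$ on $\mathrm{int}(B)$, this cone is disjoint from $\mathrm{int}(B)$, so $G$ is $O$-visible.

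For the second claim the argument is dual. Fix a facet $G'$ of $F := F_O(\sigma^k)$ containing $\sigma^k$, let $H' = \mathrm{aff}(G') \ni X$, and choose an affine functional $\ell'$ vanishing on $H'$ and positive on $\mathrm{int}(F)$. Writing $X - t\overrightarrow{OX} = (1-t)X + tO$ and using that this point lies in $\mathrm{int}(F)$ for small $t > 0$, the same computation yields $\ell'(X - t\overrightarrow{OX}) = t\,\ell'(O) > 0$, so $\ell'(O) > 0$ and $O$ lies on the same side of $H'$ as $\mathrm{int}(F)$. Now the points $(1-t)X + tO$ with $t$ small and positive belong simultaneously to $\mathrm{int}(F)$ and to the segment $[O,X] \subseteq \mathrm{conv}(\{O\}\cup\sigma^k) \subseteq \mathrm{conv}(\{O\}\cup G')$, i.e. to the cone of vertex $O$ over $G'$. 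Hence this cone meets $\mathrm{int}(F)$, so $G'$ is $O$-non-visible, completing the proof.

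The only delicate point is keeping the conventions straight: one must combine the correct orientation of ``behind'' versus ``in front of'' (the sign of $\overrightarrow{OX}$ in the definitions of $B_O$ and $F_O$) with the correct reading of ``cone of vertex $O$ over a face'' as the bounded cone $\mathrm{conv}(\{O\}\cup\cdot)$ rather than the unbounded one, since with the unbounded cone the roles of visible and non-visible would be exactly reversed. One should also note at the outset that the hypothesis that $\sigma^k$ has relative interior contained in the interior of $\Delta_m^n$, with $k \leq n-1$, is precisely what guarantees that $\sigma^k$ has a full star and that $B_O(\sigma^k)$, $F_O(\sigma^k)$ are well-defined distinct $n$-simplices; beyond that the proof is routine affine geometry, so I expect no substantive obstacle.
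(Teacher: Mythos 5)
Your proof is correct and follows essentially the same route as the paper: both arguments rest on the observation that the $n-k$ facets of $B_O(\sigma^k)$ (respectively, $F_O(\sigma^k)$) containing $\sigma^k$ must all be $O$-visible (respectively, $O$-non-visible), the paper phrasing this as a short contradiction about where $X \pm t\overrightarrow{OX}$ can lie, while you spell out the same separation argument directly with affine functionals. The extra care about the cone convention is reasonable but not a divergence in substance.
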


\begin{proof}
    The simplex $\sigma^k$ is an intersection of $n-k$  top-dimensional faces of $B_O(\sigma^k)$ (respectively, $F_O(\sigma^k)$). If one of these $n$-faces is $O$-non-visible (respectively, $O$-visible), then we reach a contradiction, as for all $t \in (0, \epsilon)$, the point $X + t \overrightarrow{OX}$ (respectively, $X - t \overrightarrow{OX}$) cannot lie in the interior of $B_O(\sigma^k)$ (respectively, $F_O(\sigma^k)$).
\end{proof}

\begin{cor}
    If $k=0$, then $i^O(B_O(\sigma^k)) = n$ and $i^O(F_O(\sigma^k)) = 1$. \hfill $\qedsymbol$
\end{cor}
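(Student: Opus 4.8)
The plan is to deduce the corollary directly from Lemma~\ref{lemme_indices_devant_derriere}, together with the elementary fact that for every $n$-simplex $\sigma^n$ of $\tau(P)$ and every $O$ generic with respect to $\tau(P)$ one has $1 \le i^O(\sigma^n) \le n$. Granting this, I set $k=0$ and write $\sigma^0 = \{v\}$. The simplices $B_O(v)$ and $F_O(v)$ both lie in the star of $v$, so each of them has $v$ among its vertices; consequently $v$ belongs to exactly $n$ of its $n+1$ facets, namely all of them except the facet opposite $v$. Lemma~\ref{lemme_indices_devant_derriere} with $k=0$ says that these $n$ facets of $B_O(v)$ are all $O$-visible and that these $n$ facets of $F_O(v)$ are all $O$-non-visible, whence $i^O(B_O(v)) \ge n$ and $i^O(F_O(v)) \le 1$. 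Combined with the universal bounds $i^O(B_O(v)) \le n$ and $i^O(F_O(v)) \ge 1$, this gives the claimed equalities.

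So it remains to establish $1 \le i^O(\sigma^n) \le n$. First I would record the following description of $O$-visibility. Since $O$ lies on no hyperplane spanned by an $(n-1)$-simplex of $\tau(P)$, for each facet $F$ of $\sigma^n$ the point $O$ lies strictly on one of the two sides of the affine hyperplane $H_F \supset F$, and I claim that $F$ is $O$-visible precisely when $O$ lies on the side of $H_F$ not containing $\sigma^n$. Indeed, if $O$ is on that side, then $\mathrm{conv}(\{O\}\cup F)$ is contained in the corresponding closed half-space and therefore meets $\sigma^n$ only within $H_F$, that is, only in $F$ itself, so it misses the interior of $\sigma^n$; conversely, if $O$ lies strictly on the side containing $\sigma^n$, then pushing a point of the relative interior of $F$ slightly towards $O$ produces a point of the interior of $\sigma^n$ lying in $\mathrm{conv}(\{O\}\cup F)$, so $F$ is not $O$-visible.

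With this description, $i^O(\sigma^n)$ is the number of facet hyperplanes of $\sigma^n$ that separate $O$ from $\sigma^n$. Since $\sigma^n$ is the intersection of the $n+1$ closed half-spaces bounded by its facet hyperplanes and containing it, and $O \notin \sigma^n$ (because $O \notin \Delta_m^n \supseteq \sigma^n$), at least one of these half-spaces fails to contain $O$, whence $i^O(\sigma^n) \ge 1$. For the upper bound, the $n+1$ open half-spaces lying on the far sides of the facet hyperplanes have empty common intersection — this is immediate for the standard simplex and is an affine invariant, equivalently the outward facet normals of a simplex positively span $\R^n$ — so $O$ can lie on the far side of at most $n$ of the facet hyperplanes, whence $i^O(\sigma^n) \le n$.

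The argument is essentially formal, so I do not expect a genuine obstacle; the only point that requires care is the precise meaning of ``the cone of vertex $O$ over $\sigma^{n-1}$'' in the definition of $O$-visibility. I would read it as the solid cone $\mathrm{conv}(\{O\}\cup \sigma^{n-1})$, which is the reading under which both the visibility description above and Lemma~\ref{lemme_indices_devant_derriere} are correct, and the corollary then follows with no ambiguity.
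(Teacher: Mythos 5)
Your argument is correct and matches the paper's (implicit) reasoning: the corollary is exactly Lemma~\ref{lemme_indices_devant_derriere} at $k=0$ combined with the elementary bounds $1 \le i^O(\sigma^n) \le n$, which is why the paper states it with no written proof. Your careful justification of those universal bounds (via the separating-hyperplane description of $O$-visibility and the fact that the $n+1$ outer open half-spaces of a simplex have empty intersection) is a sound filling-in of details the paper takes as obvious.
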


\begin{lem}
    \label{lemme_technique_signes}
    Suppose that the vertices of $\sigma^k$ all bear the same sign and that $B_O(\sigma^k)$ (respectively, $F_O(\sigma^k)$) has real critical copies. Then we have $i^O(B_O(\sigma^k)) = n-k$ (respectively, $i^O(F_O(\sigma^k)) = k+1$).
\end{lem}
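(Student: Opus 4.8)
The plan is to play the combinatorial description of the facets of $B_O(\sigma^k)$ (respectively $F_O(\sigma^k)$) that contain $\sigma^k$ off against the sign constraint imposed by real $O$-criticality. Write $\sigma^n = B_O(\sigma^k)$ in the first case and $\rho^n = F_O(\sigma^k)$ in the second, and let $\varepsilon_0 \in \{+,-\}$ be the common sign of the vertices of $\sigma^k$.

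First I would record two elementary facts about an arbitrary $n$-simplex $S$ of $\tau(P)$ and the fixed generic point $O$. Directly from the definitions of the $O$-root and $O$-co-root, a vertex $v$ of $S$ lies in $V^O_-(S)$ exactly when the facet of $S$ opposite $v$ is $O$-visible, and lies in $V^O_+(S)$ exactly when that facet is $O$-non-visible; consequently $V^O_+(S)$ and $V^O_-(S)$ partition the vertex set of $S$, and $|V^O_-(S)| = i^O(S)$. Moreover, since $O$ lies outside $\Delta_m^n \supseteq S$, the line through $O$ and the barycenter of $S$ meets $\partial S$ in two distinct facets, one $O$-visible and one $O$-non-visible, so $1 \le i^O(S) \le n$; in particular both $V^O_+(S)$ and $V^O_-(S)$ are non-empty.

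Next I would read off the local picture from the proof of Lemma~\ref{lemme_indices_devant_derriere}. In the ``behind'' case, $\sigma^k$ is the face of $\sigma^n$ spanned by the $k+1$ vertices of $\sigma^n$ lying in $\sigma^k$; let $U$ denote the remaining $n-k$ vertices. For each $u \in U$ the facet of $\sigma^n$ opposite $u$ is one of the $n-k$ facets of $\sigma^n$ containing $\sigma^k$, and the proof of Lemma~\ref{lemme_indices_devant_derriere} shows each of these facets to be $O$-visible; by the first fact above this gives $U \subseteq V^O_-(\sigma^n)$, hence $V^O_+(\sigma^n)$ is contained in the vertex set of $\sigma^k$. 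Now I bring in the signs: because $\sigma^n$ has real critical copies, the lemma of~\cite{itenbergshustincritpts} stating that an $n$-simplex of $\tau(P)$ has either $2^n$ or $0$ real $O$-critical symmetric copies forces all $2^n$ of them, in particular $\sigma^n$ itself, to be real $O$-critical; so there is a sign $\varepsilon$ such that $\mu$ equals $\varepsilon$ on $V^O_+(\sigma^n)$ and $-\varepsilon$ on $V^O_-(\sigma^n)$. Since $V^O_+(\sigma^n)$ is non-empty and contained in the vertex set of $\sigma^k$, on which $\mu \equiv \varepsilon_0$, we get $\varepsilon = \varepsilon_0$; hence no vertex of $\sigma^k$ can lie in $V^O_-(\sigma^n)$, as it would then carry both signs. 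Therefore $V^O_-(\sigma^n) \subseteq U$, and together with $U \subseteq V^O_-(\sigma^n)$ this yields $V^O_-(\sigma^n) = U$, so $i^O(\sigma^n) = |V^O_-(\sigma^n)| = n-k$, as claimed.

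The ``in front of'' case is the mirror image. The proof of Lemma~\ref{lemme_indices_devant_derriere} shows the $n-k$ facets of $\rho^n$ containing $\sigma^k$ to be $O$-non-visible, so the set $U'$ of the $n-k$ vertices of $\rho^n$ not in $\sigma^k$ satisfies $U' \subseteq V^O_+(\rho^n)$, whence $V^O_-(\rho^n)$ is contained in the vertex set of $\sigma^k$. As $i^O(\rho^n) \ge 1$, the set $V^O_-(\rho^n)$ is non-empty, hence entirely of sign $\varepsilon_0$, so real $O$-criticality makes $V^O_+(\rho^n)$ entirely of sign $-\varepsilon_0$ and therefore disjoint from the vertex set of $\sigma^k$; thus $V^O_+(\rho^n) = U'$ and $i^O(\rho^n) = (n+1) - |V^O_+(\rho^n)| = k+1$. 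I do not foresee a real obstacle here, since the argument is essentially bookkeeping; the points that need care are matching the visibility convention to membership in $V^O_-$ versus $V^O_+$ (getting this backwards would interchange the roles of $B_O$ and $F_O$), and invoking the bound $1 \le i^O(S) \le n$ so that the relevant $O$-roots and $O$-co-roots are non-empty and the common sign $\varepsilon$ is well defined.
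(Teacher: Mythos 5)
Your proof is correct and follows essentially the same route as the paper's: both rest on the fact that the $n-k$ facets of $B_O(\sigma^k)$ (resp.\ $F_O(\sigma^k)$) containing $\sigma^k$ are $O$-visible (resp.\ $O$-non-visible), so that $V^O_+$ (resp.\ $V^O_-$) is contained in the vertex set of $\sigma^k$, and then the single-sign hypothesis together with real $O$-criticality forces the index. The only difference is presentational — you argue directly via the vertex--facet correspondence and the equality $V^O_-=U$ (resp.\ $V^O_+=U'$), whereas the paper runs a cardinality-based contradiction — which does not change the substance.
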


\begin{proof}
    The simplex $\sigma^k$ is an intersection of $n-k$  top-dimensional $O$-visible (respectively, $O$-non-visible) faces of $B_O(\sigma^k)$ (respectively, $F_O(\sigma^k)$). Hence, the set of vertices of $\sigma^k$ contains $V_O^+(B_O(\sigma^k))$ (respectively, $V_O^-(F_O(\sigma^k))$). If $B_O(\sigma^k)$ (respectively, $F_O(\sigma^k)$) has more than 
    $n-k$
    $O$-visible (respectively, $O$-non-visible) $n$-faces, then the cardinal of $V_O^+(B_O(\sigma^k))$ (respectively, $V_O^-(F_O(\sigma^k))$) is less than $k+1$. So, at least one vertex of $\sigma^k$ belongs to $V_O^+(B_O(\sigma^k))$ (respectively, $V_O^-(F_O(\sigma^k))$) and at least one vertex of $\sigma^k$ belongs to $V_O^-(B_O(\sigma^k))$ (respectively, $V_O^+(F_O(\sigma^k))$). As these vertices bear the same sign, $B_O(\sigma^k)$ (respectively, $F_O(\sigma^k)$) has no real critical copy. We reach a contradiction. 
\end{proof}

\begin{lem}
    \label{lemme_technique_n-1_simplexes}
    If $n>2$ and $k = n-1$, then
    \begin{itemize}
        \item either $i^O(B_O(\sigma^k)) \neq 1$,
        \item or $i^O(F_O(\sigma^k)) \neq n$,
        \item or at least one of the simplices $B_O(\sigma^k)$ and $F_O(\sigma^k)$ is not the dilation by $2$ of a primitive simplex.
    \end{itemize}
\end{lem}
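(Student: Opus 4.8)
The plan is to argue by contradiction. Assume that $i^O(B) = 1$, that $i^O(F) = n$ (writing $B := B_O(\sigma^{n-1})$ and $F := F_O(\sigma^{n-1})$), and that both $B$ and $F$ are dilations by $2$ of primitive $n$-simplices. Since $B/2$ is then a unimodular $n$-simplex with integer vertices and $\sigma^{n-1}/2$ is one of its facets, there is a $\Z$-affine transformation (an element of $GL_n(\Z)$ followed by an integer translation) sending $B$ to $\mathrm{conv}(0, 2e_1, \dots, 2e_n)$, the facet $\sigma^{n-1}$ to $\mathrm{conv}(0, 2e_1, \dots, 2e_{n-1})$, and the opposite vertex $v_B$ of $B$ to $2e_n$; I work in these coordinates. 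Because $F$ is an $n$-simplex of $\tau(P)$ that shares the facet $\sigma^{n-1}$ with $B$ but is distinct from it, the opposite vertex $v_F$ of $F$ lies strictly on the other side of the hyperplane $H = \{x_n = 0\}$ from $v_B$, so $(v_F)_n < 0$; since $F/2$ is unimodular with integer vertices, in fact $(v_F)_n = -2$, and writing $v_F = (c_1, \dots, c_{n-1}, -2)$, all the $c_i$ are even integers.

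Next I reformulate the two index hypotheses as linear inequalities on $O = (x_1, \dots, x_n)$, using that a facet of a simplex is $O$-visible precisely when $O$ and the opposite vertex lie on opposite sides of the affine hyperplane that facet spans. The definition of $F$ being in front of $\sigma^{n-1}$ with respect to $O$ says that $X + t(O - X)$ lies in the interior of $F$ for all small $t > 0$, where $X$ is the barycenter of $\sigma^{n-1}$; since $\mathrm{int}\,F \subset \{x_n < 0\}$ and $X \in H$, this forces $x_n < 0$, i.e. $O$ lies on the same side of $H$ as $v_F$. Consequently $\sigma^{n-1}$ is an $O$-visible facet of $B$ and an $O$-non-visible facet of $F$. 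Hence $i^O(B) = 1$ forces every other facet of $B$ to be $O$-non-visible, which reads $x_j > 0$ for $j = 1, \dots, n-1$ and $\sum_{i=1}^n x_i < 2$; and $i^O(F) = n$ forces every other facet of $F$ to be $O$-visible, which — after writing down the hyperplane of each facet of $F$ through $v_F$ — reads $2 x_j + c_j x_n < 0$ for $j = 1, \dots, n-1$ and $2 \sum_{i=1}^{n-1} x_i + \bigl(\sum_{i=1}^{n-1} c_i - 2\bigr) x_n > 4$.

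It remains to derive a contradiction from these inequalities. Since $x_n < 0$ and $x_j > 0$, each relation $2 x_j + c_j x_n < 0$ forces $c_j > 0$, hence $c_j \ge 2$ because $c_j$ is even, so $\sum_{i=1}^{n-1} c_i \ge 2(n-1) \ge 4$ using the hypothesis $n > 2$. On the other hand, $\sum_{i=1}^n x_i < 2$ gives $2 \sum_{i=1}^{n-1} x_i < 4 - 2 x_n$, while the last inequality gives $2 \sum_{i=1}^{n-1} x_i > 4 - \bigl(\sum_{i=1}^{n-1} c_i - 2\bigr) x_n$; comparing the two and dividing by $x_n < 0$ yields $\sum_{i=1}^{n-1} c_i < 4$, which is the desired contradiction.

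I expect the only real work to be in the setup, not the conclusion: one must justify the normalization (a unimodular simplex with a marked facet is $\Z$-affinely standard, since that facet, being a facet of a unimodular simplex, is itself unimodular and spans a primitive affine hyperplane), confirm the "visible $\Leftrightarrow$ opposite vertex on the far side" reformulation of the definition, and correctly pin down $x_n < 0$ from the behind/in-front convention; the explicit computation of the $n$ facet-hyperplanes of $F$ and the final sign chase are routine, and the hypothesis $n > 2$ enters only through $2(n-1) \ge 4$.
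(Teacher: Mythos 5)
Your proof is correct, and at bottom it reaches the same contradiction as the paper, by a more explicit route. The paper's argument is coordinate-free: it takes the midpoint $M$ of the segment joining the vertex $V_1$ of $B_O(\sigma^{n-1})$ opposite the common facet and the corresponding vertex $V_2$ of $F_O(\sigma^{n-1})$, notes that $M$ is an integer point lying on the hyperplane of $B_O(\sigma^{n-1}) \cap F_O(\sigma^{n-1})$, asserts that the hypotheses $i^O(B_O(\sigma^{n-1}))=1$ and $i^O(F_O(\sigma^{n-1}))=n$ force $M$ into the relative interior of that common facet, and concludes because the dilation by $2$ of a primitive $(n-1)$-simplex has no interior lattice point as soon as $n-1\geq 2$. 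In your normalized coordinates this midpoint is exactly $(c_1/2,\dots,c_{n-1}/2,0)$, and your conclusions $c_j>0$ (hence $c_j\geq 2$ by evenness) and $\sum_j c_j<4$ are precisely the assertion that this lattice point lies in the relative interior of $\mathrm{conv}(0,2e_1,\dots,2e_{n-1})$; so your inequality chase amounts to a verification of the one step the paper states without detail, namely that the two index conditions push $M$ inside the facet. What your version buys is explicitness: the translation of visibility into half-space conditions on $O$ (legitimate since genericity of $O$ keeps it off all facet hyperplanes, making the inequalities strict) and the facet equations of $F$ make the role of the hypothesis $n>2$ completely transparent. What the paper's version buys is brevity and freedom from any choice of coordinates, at the cost of leaving the key geometric step to the reader — a step your computation supplies.
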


\begin{proof}
    Suppose that $B_O(\sigma^k)$ and $F_O(\sigma^k)$ are both dilations by $2$ of primitive simplices, that $i^O(B_O(\sigma^k)) = 1$ and that $i^O(F_O(\sigma^k)) = n$. Let $V_1$ (respectively, $V_2$) be the vertex of $B_O(\sigma^k)$ (respectively, $F_O(\sigma^k)$) that does not belong to $F_O(\sigma^k)$ (respectively, $B_O(\sigma^k)$). As the simplices are dilations by $2$ of primitive simplices, the middle $M$ of the segment $[V_1, V_2]$ is an integer point and lies on the hyperplane that contains $B_O(\sigma^k) \bigcap F_O(\sigma^k)$. Furthermore, $B_O(\sigma^k) \bigcap F_O(\sigma^k)$ is the dilation by $2$ of a primitive $(n-1)$-simplex. Then, as $i^O(B_O(\sigma^k)) = 1$ and $i^O(F_O(\sigma^k)) = n$, the point $M$ lies in the relative interior of $B_O(\sigma^k) \bigcap F_O(\sigma^k)$. We obtain a contradiction : the dilation by $2$ of a primitive $(n-1)$-simplex does not contain any integer point in its relative interior. 
\end{proof}

\begin{cor}
    Suppose that $k = n-1$ and $B_O(\sigma^k)$ and $F_O(\sigma^k)$ are both dilations by $2$ of primitive simplices. If all the vertices of $\sigma^k$ bear the same sign, then at least one of $B_O(\sigma^k)$ and $F_O(\sigma^k)$ does not have any real critical copy.
\end{cor}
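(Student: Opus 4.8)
The plan is to argue by contradiction, playing the two preceding lemmas against each other. So suppose that both $B := B_O(\sigma^{n-1})$ and $F := F_O(\sigma^{n-1})$ have real critical copies; recall that by hypothesis $k = n-1$, all vertices of $\sigma^{n-1}$ bear the same sign, and both $B$ and $F$ are dilations by $2$ of primitive $n$-simplices.

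First I would apply Lemma \ref{lemme_technique_signes}. Since the vertices of $\sigma^{n-1}$ all carry the same sign and the simplex $B$ behind $\sigma^{n-1}$ has real critical copies, the lemma forces $i^O(B) = n - k = 1$; applying the lemma to the simplex $F$ in front of $\sigma^{n-1}$, which also has real critical copies, gives $i^O(F) = k + 1 = n$. The only thing to be careful about here is keeping the roles of "behind" (whose $O$-index is pinned at the lower bound $n-k$ of Lemma \ref{lemme_indices_devant_derriere}) and "in front" (whose $O$-index is pinned at the upper bound $k+1$) straight.

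Then I would invoke Lemma \ref{lemme_technique_n-1_simplexes}, whose hypotheses $n>2$ and $k = n-1$ are in force. Since both $B$ and $F$ are dilations by $2$ of primitive simplices, its third alternative is excluded, so we must have $i^O(B) \neq 1$ or $i^O(F) \neq n$, which directly contradicts the previous paragraph. Hence at least one of $B$ and $F$ has no real critical copy, as claimed.

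I do not anticipate a genuine obstacle: the corollary is a formal consequence of Lemmas \ref{lemme_technique_signes} and \ref{lemme_technique_n-1_simplexes}. The only points needing attention are that $n>2$ is implicitly assumed, exactly as in Lemma \ref{lemme_technique_n-1_simplexes} (for $n\le 2$ the configuration either does not occur or is dispatched by hand), and that the equal-sign hypothesis is what lets one pin $i^O(B)$ and $i^O(F)$ down to the extreme values $n-k$ and $k+1$ — which is precisely what makes the trichotomy of Lemma \ref{lemme_technique_n-1_simplexes} yield a contradiction.
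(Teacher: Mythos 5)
Your argument is correct and is exactly the paper's proof: the corollary is obtained by combining Lemma \ref{lemme_technique_signes} (which pins $i^O(B_O(\sigma^{n-1}))=1$ and $i^O(F_O(\sigma^{n-1}))=n$ under the equal-sign and real-critical-copy hypotheses) with Lemma \ref{lemme_technique_n-1_simplexes}, yielding the contradiction you describe. Your remark about the implicit hypothesis $n>2$ inherited from Lemma \ref{lemme_technique_n-1_simplexes} is a fair observation but does not change the argument.
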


\begin{proof}
    It suffices to combine the last two lemmas.
\end{proof}

\subsection{Number of $n$-simplices of a given index}

\label{section_number_of_simplices_given_index}

\begin{prop}
    \label{prop_borne_indice_general}
    Let $P$ be a $T$-polynomial with Newton polytope $\Delta_m^n$ and let $O$ be an integer point that is generic with respect to $P$. For any $k \in \{0,...,n \}$, we have that
    \begin{center}
    $s_k(\tau(P)) \customeq \sum_{i=0}^k { n-i \choose n-k } \mathcal{S}_{n-i}^O(\tau(P)) \customeq \sum_{i=0}^k { n-i \choose n-k } \mathcal{S}_{i+1}^O(\tau(P))$.
    \end{center}
\end{prop}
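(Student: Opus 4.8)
The plan is a double count. Fix $k$; the case $k=n$ is trivial since then $s_n(\tau(P))=\sum_{j}\mathcal{S}^O_j(\tau(P))$ and every binomial coefficient appearing is $1$, so assume $k<n$. Consider the set $\mathcal{P}_k$ of pairs $(\sigma^n,\sigma^k)$ in which $\sigma^n$ is an $n$-simplex of $\tau(P)$, $\sigma^k$ is a $k$-face of $\sigma^n$, and all $n-k$ \emph{ambient facets} of $\sigma^k$ in $\sigma^n$ (the $(n-1)$-faces of $\sigma^n$ containing $\sigma^k$) are $O$-visible. Counting over $\sigma^n$ first is immediate: a $k$-face of $\sigma^n$ is the intersection of $n-k$ of its $n+1$ facets, so $\sigma^n$ contributes $\binom{i^O(\sigma^n)}{n-k}$ pairs, whence $\#\mathcal{P}_k=\sum_{j}\binom{j}{n-k}\mathcal{S}^O_j(\tau(P))$. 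Since $O\notin\Delta_m^n$ every $n$-simplex has at least one $O$-visible facet, and since the intersection of the open half-spaces opposite the facets of a simplex is empty it has at most $n$ of them; thus $\mathcal{S}^O_0(\tau(P))=\mathcal{S}^O_{n+1}(\tau(P))=0$, and substituting $j=n-i$ and using $\binom{j}{n-k}=0$ for $j<n-k$ turns this into $\sum_{i=0}^{k}\binom{n-i}{n-k}\mathcal{S}^O_{n-i}(\tau(P))$, the right-hand side of the first asserted $\customeq$.

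Counting over $\sigma^k$ is governed by the claim: for \emph{every} $k$-simplex $\sigma^k$ of $\tau(P)$ there is at most one $n$-simplex in its star all of whose ambient facets of $\sigma^k$ are $O$-visible, and there is exactly one — namely the simplex $B_O(\sigma^k)$ behind $\sigma^k$ — when the relative interior of $\sigma^k$ lies in the interior of $\Delta_m^n$. Granting this, interior $k$-simplices contribute exactly one pair each to $\mathcal{P}_k$ and the others at most one each; as there are only $O(m^{n-1})$ $k$-simplices contained in $\partial\Delta_m^n$ (on each of the $n+1$ facets of $\Delta_m^n$ the top-dimensional simplices of the induced triangulation number $O(m^{n-1})$ by a volume count, each carrying boundedly many $k$-faces), we get $\#\mathcal{P}_k=s_k(\tau(P))+O(m^{n-1})$, and combined with the previous paragraph this is the first $\customeq$. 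The second one is obtained identically with "$O$-visible" replaced by "$O$-non-visible" and $B_O$ by $F_O$: now $\sigma^n$ contributes $\binom{n+1-i^O(\sigma^n)}{n-k}$ pairs, so the count over $\sigma^n$ is $\sum_j\binom{n+1-j}{n-k}\mathcal{S}^O_j(\tau(P))$, which after reindexing $j=i+1$ and using $\mathcal{S}^O_0=0$ equals $\sum_{i=0}^{k}\binom{n-i}{n-k}\mathcal{S}^O_{i+1}(\tau(P))$.

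To prove the claim I would work locally at the barycenter $X$ of $\sigma^k$, setting $v=\overrightarrow{OX}=X-O$. Since $\sigma^k$ is a face of some $n$-simplex of $\tau(P)$, it is the intersection of $n-k$ of that simplex's facets, which are $(n-1)$-simplices of $\tau(P)$; genericity of $O$ puts $O$ on none of their hyperplanes, so $O\notin\mathrm{aff}(\sigma^k)$ and $v\neq 0$. Fix an $n$-simplex $\sigma^n$ of the star of $\sigma^k$, with ambient facets $F_1,\dots,F_{n-k}$, and write $H_{F_\ell}=\{\langle\lambda_\ell,\cdot\rangle=c_\ell\}$ with $\sigma^n\subset\{\langle\lambda_\ell,\cdot\rangle\geq c_\ell\}$. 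As $X\in\mathrm{relint}(\sigma^k)$, it lies on every $H_{F_\ell}$ and strictly on the $\sigma^n$-side of the hyperplane of every other facet of $\sigma^n$; hence for small $t>0$ the point $X+tv$ lies in $\mathrm{int}(\sigma^n)$ if and only if $\langle\lambda_\ell,v\rangle>0$ for all $\ell$, i.e. (using $\langle\lambda_\ell,X\rangle=c_\ell$) if and only if $O$ lies on the side of each $H_{F_\ell}$ opposite to $\sigma^n$, which by the definition of visibility — as in the proof of Lemma~\ref{lemme_indices_devant_derriere} — means precisely that all of $F_1,\dots,F_{n-k}$ are $O$-visible. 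The local cones $\{w:\langle\lambda_\ell,w\rangle\geq 0,\ \ell=1,\dots,n-k\}$ attached to the $n$-simplices of the star of $\sigma^k$ have pairwise disjoint interiors (distinct $n$-simplices of the triangulation do), so $v$, which by genericity avoids each $\{\langle\lambda_\ell,\cdot\rangle=0\}$, lies in the interior of at most one of them — and in exactly one when $\sigma^k$ is interior, since then these cones cover a neighbourhood of the origin; that $\sigma^n$ is $B_O(\sigma^k)$. Replacing $v$ by $-v$ gives the statement for $F_O(\sigma^k)$.

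The main obstacle is this claim, and in particular the uniqueness assertion valid for \emph{all} $k$-simplices, including those on $\partial\Delta_m^n$: this is what forces the boundary to perturb $\#\mathcal{P}_k$ by only $O(m^{n-1})$ rather than by a term of order $m^n$ — which one might fear a priori, since many $n$-simplices can meet $\partial\Delta_m^n$ — and it rests on the elementary fact that the relevant local cones are interior-disjoint whether or not $\sigma^k$ is interior. The remaining points (the volume bound on the number of boundary strata, and the small index-range discrepancies coming from $\mathcal{S}^O_0=\mathcal{S}^O_{n+1}=0$) are routine and are absorbed into the polynomial error permitted by $\customeq$.
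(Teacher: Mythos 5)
Your proof is correct and follows essentially the same route as the paper: both count the pairs $(\sigma^n,\sigma^k)$ in which $\sigma^k$ is an intersection of $O$-visible (respectively $O$-non-visible) facets of $\sigma^n$, evaluating the count once via the indices (giving $\sum_{i=0}^k \binom{n-i}{n-k}\mathcal{S}^O_{n-i}$, resp. $\mathcal{S}^O_{i+1}$) and once via the correspondence $\sigma^k \mapsto B_O(\sigma^k)$ (resp. $F_O(\sigma^k)$), with boundary simplices absorbed in the $O(m^{n-1})$ error. You merely make explicit, through the local cone argument, the existence-and-uniqueness claim that the paper obtains tersely from Lemma~\ref{lemme_indices_devant_derriere} and the definition of $B_O$, $F_O$.
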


\begin{proof}
    Let $k \in  \{0,...,n \}$. By Lemma \ref{lemme_indices_devant_derriere}, for any $k$-simplex $\sigma^k$ of $\tau(P)$ lying in the interior of $\Delta_m^n$, we have $i^O(B_O(\sigma^k))\geq n-k$ and $i^O(F_O(\sigma^k)) \leq k+1$.
    Let $i \in \{ 0,...,k \}$.
    Let $\sigma^n_1$ (respectively, $\sigma^n_2$) be an $n$-simplex of index $n-i$ (respectively, of index $i+1$). It has $n-i$ $O$-visible (respectively, $O$-non-visible) $(n-1)$-faces and hence ${ n-i \choose n-k }$ distinct $k$-faces that are intersections of $O$-visible (respectively, $O$-non-visible) $(n-1)$-faces. We can suppose that all these $k$-faces are contained in the interior of $\Delta_m^n$, as the number of $k$-simplices of $\tau(P)$ that are contained in $\partial \Delta_m^n$ is asymptotically negligible. For any simplex $\sigma^k$ that belongs to the set of these $k$-faces, $B_O(\sigma^k) = \sigma^n_1$ (respectively, $F_O(\sigma^k) = \sigma^n_2$). The result follows.
\end{proof}

\begin{prop}
    Let $P$ be a $T$-polynomial with Newton polytope $\Delta_m^n$ and let $O$ be an integer point that is generic with respect to $\tau(P)$. For any $k \in \{0,...,n \}$, we have that
    \begin{center}
    $\mathcal{S}_{n-k}^O(\tau(P)) \customeq \sum_{i = 0}^k (-1)^{k-i} s_i(\tau(P)) \frac{1}{(k-i)!}\Pi_{j=1}^{k-i}(n-k+j) \customeq \mathcal{S}_{k+1}^O(\tau(P))$.
    \end{center}
\end{prop}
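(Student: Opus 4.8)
The plan is to recognise the two asserted identities as the inversion of the triangular relations of Proposition~\ref{prop_borne_indice_general}, and to carry that inversion out explicitly. First I would rewrite the coefficient appearing in the statement: since $\prod_{j=1}^{k-i}(n-k+j)=(n-k+1)(n-k+2)\cdots(n-i)=\frac{(n-i)!}{(n-k)!}$, one has $\frac{1}{(k-i)!}\prod_{j=1}^{k-i}(n-k+j)=\frac{(n-i)!}{(n-k)!\,(k-i)!}=\binom{n-i}{n-k}$ (using $(n-i)-(k-i)=n-k$), so the proposition is equivalent to $\mathcal{S}_{n-k}^O(\tau(P))\customeq\sum_{i=0}^{k}(-1)^{k-i}\binom{n-i}{n-k}\,s_i(\tau(P))$ together with the analogous identity with $\mathcal{S}_{k+1}^O(\tau(P))$ on the left. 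On the other hand, Proposition~\ref{prop_borne_indice_general} provides, up to an error polynomial in $m$ of degree at most $n-1$, the system $s_k(\tau(P))\customeq\sum_{i=0}^{k}\binom{n-i}{n-k}\,\mathcal{S}_{n-i}^O(\tau(P))$ for $k=0,\dots,n$ (and the same with $\mathcal{S}_{i+1}^O$ in place of $\mathcal{S}_{n-i}^O$); this is a lower-triangular linear system in the unknowns $\mathcal{S}_{n-k}^O(\tau(P))$ whose diagonal coefficients are $\binom{n-k}{n-k}=1$ and all of whose coefficients are integers independent of $m$, hence it can be solved.

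The key step is to verify that the lower-triangular integer matrix $M=\bigl(\binom{n-i}{n-k}\bigr)_{0\le i\le k\le n}$ is inverted by $N=\bigl((-1)^{k-i}\binom{n-i}{n-k}\bigr)_{0\le i\le k\le n}$, equivalently that $\sum_{i=\ell}^{k}(-1)^{k-i}\binom{n-i}{n-k}\binom{n-\ell}{n-i}=\delta_{k\ell}$ for all $0\le\ell\le k\le n$. Using the ``subset of a subset'' identity $\binom{n-\ell}{n-i}\binom{n-i}{n-k}=\binom{n-\ell}{n-k}\binom{k-\ell}{k-i}$, the left-hand side becomes $\binom{n-\ell}{n-k}\sum_{s=0}^{k-\ell}(-1)^{s}\binom{k-\ell}{s}=\binom{n-\ell}{n-k}(1-1)^{k-\ell}$, which vanishes for $k>\ell$ and equals $1$ for $k=\ell$. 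Granting this, forming the fixed integer linear combination $\sum_{i}N_{k,i}\,s_i(\tau(P))$ of the equations of Proposition~\ref{prop_borne_indice_general} yields the first identity; and since only finitely many coefficients, all independent of $m$, are involved, the accumulated error stays a polynomial of degree at most $n-1$, so the relation holds in the sense of ``$\customeq$''. The co-index statement follows in exactly the same way, starting instead from $s_k(\tau(P))\customeq\sum_{i=0}^{k}\binom{n-i}{n-k}\,\mathcal{S}_{i+1}^O(\tau(P))$ and using the same matrix $N$.

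I do not expect a substantial obstacle here: the whole argument is the inversion of an explicit unit lower-triangular matrix, the only genuine computation being the elementary binomial identity above (one could equally well argue by a direct induction on $k$, isolating $\mathcal{S}_{n-k}^O(\tau(P))$ in the $k$-th equation via the diagonal entry $1$). The mild points to watch are the bookkeeping of the degree-$\le n-1$ error terms — harmless, since the entries of $N$ do not depend on $m$ — and the boundary value $k=n$, where $\mathcal{S}_0^O$ and $\mathcal{S}_{n+1}^O$ occur and are to be read as $0$ (an $n$-simplex has $O$-index between $1$ and $n$ for $O$ generic), so that in that case the statement degenerates to a Dehn--Sommerville-type identity among the $s_i(\tau(P))$, consistently with Proposition~\ref{prop_borne_indice_general}.
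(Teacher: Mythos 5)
Your proposal is correct and is essentially the paper's argument: both amount to inverting the unit lower-triangular system of Proposition~\ref{prop_borne_indice_general}, the paper by induction on $k$ (isolating $\mathcal{S}_{n-k}^O(\tau(P))$ via the diagonal entry $1$ and evaluating the same alternating binomial sum that you package as the identity $\sum_{i=\ell}^{k}(-1)^{k-i}\binom{n-i}{n-k}\binom{n-\ell}{n-i}=\delta_{k\ell}$), you by exhibiting the explicit inverse matrix. Your rewriting $\frac{1}{(k-i)!}\prod_{j=1}^{k-i}(n-k+j)=\binom{n-i}{n-k}$, the handling of the degree-$\leq n-1$ error terms, and the boundary convention $\mathcal{S}_0^O=\mathcal{S}_{n+1}^O=0$ for $k=n$ are all sound.
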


\begin{proof}
    If $k = 0$, Proposition \ref{prop_borne_indice_general} gives $s_0(\tau(P)) \customeq \mathcal{S}_{n}^O(\tau(P))$. Suppose that for any $l \leq k-1$, we have $\mathcal{S}_{n-l}(\tau(P)) = \sum_{i = 0}^l (-1)^{l-i} s_i(\tau(P)) \frac{1}{(l-i)!}\Pi_{j=1}^{l-i}(n-l+j)$. By Proposition \ref{prop_borne_indice_general}, we have $s_k(\tau(P)) \customeq \sum_{i=0}^k { n-i \choose n-k } \mathcal{S}_{n-i}^O(\tau(P))$, which gives 
    \begin{align*}
        {S}_{n-k}^O(P) & \customeq  s_k(\tau(P)) - \sum_{i=0}^{k-1} { n-i \choose n-k } \mathcal{S}_{n-i}^O(\tau(P)) & \\ 
        & =  s_k(\tau(P)) - \sum_{i=0}^{k-1} { n-i \choose n-k } \sum_{j = 0}^i (-1)^{i-j} s_j(\tau(P)) \frac{1}{(i-j)!}\Pi_{l=1}^{i-j}(n-i+l).    
    \end{align*}
    Let $q \in \{0,...,k-1 \}$. 
    In the last expression, the sum of the terms that contain $s_q(\tau(P))$ is 
    \begin{align*}
        -\sum_{i=q}^{k-1} {n-i \choose n-k} (-1)^{i-q} s_q(\tau(P)) \frac{1}{(i-q)!}\Pi_{l=1}^{i-q}(n-i+l) & =  (-1)^{k-q} \frac{(n-q)!}{(n-k)!(k-q)!} s_q(\tau(P))\\
        &= (-1)^{k-q} \frac{(n-k+1)...(n-q)}{(k-q)!}s_q(\tau(P)).
    \end{align*}
    It follows that $\mathcal{S}_{n-k}(\tau(P)) \customeq \sum_{i = 0}^k (-1)^{k-i} s_i(\tau(P)) \frac{1}{(k-i)!}\Pi_{j=1}^{k-i}(n-k+j)$. The result involving $\mathcal{S}_{k+1}(\tau(P))$ is obtained in a similar way.
\end{proof}


\section{Bounds on the Betti numbers of $T^2$-hypersurfaces}

\label{section_proof_bounds}

\subsection{Bounds on some sums of Betti numbers}

\begin{sloppypar}
    Let $P$ be a $T^2$-polynomial of degree $m$ in $n$ variables and let $A$ be the $T^2$-hypersurface in $\P^n$ defined by $P$.
\end{sloppypar}



\begin{thm}
    \label{thm_partial_sums_betti_numbers}
    For any positive integer $k$, one has
    \begin{center}
    $\sum_{i=0}^{k-1} \frac{(k+n-1-i)!}{(k-1-i)!n!} b_i(\R A) \customleq 
    \sum_{i=0}^{k-1} \frac{(k+n-1-i)!}{(k-1-i)!n!} h^{i,n-1-i}(\C A)$.\end{center}
    
    Additionally, one has $b_1(\R A) \customleq h^{1,n-2}(\C A)$.

    Moreover, if $n \leq 6$, then for any non-negative integer $k\leq n$, one has
    \begin{center}$\sum_{i=0}^k { n-i \choose n-k } b_i(\R A) \customleq \sum_{i=0}^k { n-i \choose n-k } h^{i,n-i}(\C A)$.\end{center}
\end{thm}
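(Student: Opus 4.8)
The plan is to estimate each $b_i(\R A)$ by a count of faces of $\tau(P)$, to assemble these estimates into exactly the weighted sums appearing in the statement so that they collapse to quantities controlled in Section~\ref{section_triangulations}, and to translate the outcome into Hodge numbers via the Danilov--Khovanskii formula. Write $P$ for a $T^2$-polynomial defining $A$, fix an integer point $O$ generic with respect to $\tau(P)$, and set $\tau=\tau(P)=2\tau'$ with $\tau'$ a triangulation of $\Delta_{m/2}^n$, so $s_k(\tau)=s_k(\tau')$ for all $k$; this is where the dilation by $2$ is used, and it is essential because it inserts the factor $2^{-n}$ in $\ell^*(j\Delta_{m/2}^n)=\binom{jm/2-1}{n}\customeq\frac{j^n}{2^n n!}m^n$, which will absorb the $2^n$ coming from symmetric copies of simplices. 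The other ingredient is that, for a smooth degree-$m$ hypersurface in $\P^n$, $h^{i,n-1-i}(\C A)\customeq A(n,n-1-i)\,\frac{m^n}{n!}$ with $A(n,\cdot)$ the Eulerian numbers: indeed $h^{i,n-1-i}_{\mathrm{prim}}(\C A)=\dim\big(\C[x_0,\dots,x_n]/\mathrm{Jac}(f)\big)_{(n-i)m-n-1}$ for generic $f$ by Griffiths, the Jacobian ring of a generic $f$ being a complete intersection of Hilbert series $\big(\tfrac{1-t^{m-1}}{1-t}\big)^{n+1}$, while the non-primitive part of the middle cohomology contributes only $O(1)$. The whole argument is asymptotic, and its engine is that on each side the coefficient of $m^n$ will coincide, via Worpitzky-type identities for the $A(n,\cdot)$.

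For the first inequality I would begin with $b_i(\R A)\customleq c_i^-(x^{-O}P)+c_{n-i}^+(x^{-O}P)=2^n\bar{\mathcal S}_{n-i}^O(\tau)\le 2^n\mathcal S_{n-i}^O(\tau)$ (the Morse inequality of Subsection~\ref{section_morse_ineq} applied to $x^{-O}P$, together with Theorem~\ref{thmitenbergshustin} and the count of real $O$-critical copies of a $T^2$-polynomial), multiply by $\frac{(k+n-1-i)!}{(k-1-i)!n!}=\binom{k+n-1-i}{n}$, and sum over $0\le i\le k-1$. Substituting the inversion $\mathcal S_{n-j}^O(\tau)\customeq\sum_{i=0}^{j}(-1)^{j-i}\binom{n-i}{n-j}s_i(\tau)$ of Subsection~\ref{section_number_of_simplices_given_index}, the coefficient of $s_i(\tau)$ becomes $\sum_{r\ge 0}(-1)^r\binom{n-i}{r}\binom{k+n-1-i-r}{n}=\binom{k-1}{i}$, by the identity $\sum_r(-1)^r\binom{a}{r}\binom{b-r}{c}=\binom{b-a}{c-a}$. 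Thus the weighted sum equals $\sum_{i=0}^{k-1}\binom{k-1}{i}s_i(\tau')\le\ell^*(k\Delta_{m/2}^n)\customeq\frac{k^n}{n!}m^n$ by Corollary~\ref{corollary_number_simplices_number_points} applied to $\tau'$, while $\sum_{i=0}^{k-1}\binom{k+n-1-i}{n}h^{i,n-1-i}(\C A)\customeq\frac{m^n}{n!}\sum_{i=0}^{k-1}\binom{k+n-1-i}{n}A(n,n-1-i)=\frac{k^n}{n!}m^n$ by Worpitzky's identity $\sum_p A(n,p)\binom{x+p}{n}=x^n$ at $x=k$ (the terms with $i\ge n$ vanishing). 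Comparing the coefficients of $m^n$ concludes this part.

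For $b_1(\R A)\customleq h^{1,n-2}(\C A)$ the case $k=2$ above does not suffice, as it would require a lower bound on $b_0(\R A)$; so I would argue directly from $b_1(\R A)\customleq 2^n\mathcal S_{n-1}^O(\tau)\customeq 2^n\big(s_1(\tau')-n\,s_0(\tau')\big)$ (the same inversion at $j=1$). Refining $\tau'$ to a maximal triangulation one interior vertex at a time, each step raises $s_0$ by $1$ and $s_1$ by at least $n$ (the new vertex joins the $n+1$ vertices of a containing $n$-simplex, at most one interior edge being destroyed), whence $s_1(\tau')-n\,s_0(\tau')\le\ell^*(2\Delta_{m/2}^n)-(n+1)\ell^*(\Delta_{m/2}^n)$ using Remark~\ref{rmk_nombre_segments} for the maximal triangulation. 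Since $2^n\big[\ell^*(2\Delta_{m/2}^n)-(n+1)\ell^*(\Delta_{m/2}^n)\big]\customeq(2^n-n-1)\frac{m^n}{n!}$ and $A(n,n-2)=A(n,1)=2^n-n-1$, the right-hand side is $\customeq h^{1,n-2}(\C A)$.

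For the last inequality (with $n\le 6$) I would use instead the direct collapse $\sum_{i=0}^{k}\binom{n-i}{n-k}b_i(\R A)\customleq 2^n\sum_{i=0}^{k}\binom{n-i}{n-k}\mathcal S_{n-i}^O(\tau)\customeq 2^n\,s_k(\tau')$ provided by Proposition~\ref{prop_borne_indice_general}, together with the fact that for $n\le 6$ one has $s_k(\tau')\customleq s_k^{\mathrm p}(\Delta_{m/2}^n)$ for every $k$ (Remark~\ref{rmk_nombre_segments} for $k\le 1$; Lemma~\ref{lemma_dehn_sommerville} and the auxiliary inequalities established in its proof for $2\le k\le n-1$; and $s_n(\tau')\le(m/2)^n=s_n^{\mathrm p}$ for $k=n$). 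Since $2^n s_k^{\mathrm p}(\Delta_{m/2}^n)\customeq\frac{m^n}{n!}\sum_{a=0}^k(-1)^{k-a}\binom{k}{a}(a+1)^n$ and, applying the $k$-th finite difference to Worpitzky's identity, $\sum_{a=0}^k(-1)^{k-a}\binom{k}{a}(a+1)^n=\sum_p A(n,p)\binom{p+1}{n-k}=\sum_{i=0}^k\binom{n-i}{n-k}A(n,n-1-i)$, the coefficient of $m^n$ in $2^n s_k^{\mathrm p}(\Delta_{m/2}^n)$ matches that of $\sum_{i=0}^k\binom{n-i}{n-k}h^{i,n-1-i}(\C A)$, giving the inequality. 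The main obstacle is precisely this input $s_k(\tau')\customleq s_k^{\mathrm p}$ for intermediate $k$: it is what forces $n\le 6$ (cf.\ the Remark after Lemma~\ref{lemma_dehn_sommerville}), and in higher dimension only the summed estimate of Corollary~\ref{corollary_number_simplices_number_points} survives --- which is exactly why the first family has the tailored coefficients $\binom{k+n-1-i}{n}$, chosen so that the combination of the $\mathcal S_{n-i}^O(\tau)$ telescopes into $\sum_i\binom{k-1}{i}s_i(\tau)$ rather than into a single $s_k(\tau)$. A secondary point needing care is the bookkeeping of the degree-$\le n-1$ error terms along each chain of $\customleq$ and $\customeq$, and the reduction of $h^{i,n-1-i}(\C A)$ to its primitive part computed on a generic smooth hypersurface.
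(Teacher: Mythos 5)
Your proposal is correct and follows essentially the same route as the paper: the Morse inequalities together with the Itenberg--Shustin correspondence reduce each weighted sum of Betti numbers to a weighted count of $n$-simplices by $O$-index, which collapses via Proposition \ref{prop_borne_indice_general} (equivalently, the inversion you quote) to $\sum_i\binom{k-1}{i}s_i(\tau)$, respectively to $s_1-ns_0$ and to $s_k$, and these are then bounded exactly as in the paper by Corollary \ref{corollary_number_simplices_number_points}, the vertex-insertion refinement argument, and Lemma \ref{lemma_dehn_sommerville} with the auxiliary inequalities from its proof. The only cosmetic difference is on the Hodge-theoretic side, where you recover the asymptotics of $h^{i,n-1-i}(\C A)$ via Griffiths' Jacobian-ring description and Worpitzky's identity for Eulerian numbers, while the paper simply cites Danilov--Khovanskii; both give the same identification up to lower-order terms.
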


\begin{proof}
    First, by Corollary \ref{corollary_number_simplices_number_points}, one has
    \begin{equation*}
        l^*(k\Delta_m^n) \customeq 2^nl^*(k\Delta_{\frac{m}{2}}^n) \geq 2^n\sum_{i=0}^{k-1} {k-1 \choose i} s_i(\tau(P)).
    \end{equation*}
    Let $O$ be an integer point that is generic with respect to $\tau(P)$.
    Proposition \ref{prop_borne_indice_general} states that
    \begin{equation*}
        s_i(\tau) \customeq \sum_{j=0}^i { n-j \choose n-i } \mathcal{S}_{n-j}^O(\tau(P))
    \end{equation*}
    Hence, one has 
    \begin{equation*}
        l^*(k\Delta_m^n) \customgeq 2^n\sum_{i=0}^{k-1} \left( {k-1 \choose i} \sum_{j=0}^i { n-j \choose n-i } \mathcal{S}_{n-j}^O(\tau(P)) \right) = \sum_{i=0}^{k-1} \frac{(k+n-1-i)!}{(k-1-i)!n!} 2^n\mathcal{S}_{n-i}^O(\tau(P)).
    \end{equation*}
    It remains to apply Theorem \ref{thmitenbergshustin} (\cite{itenbergshustincritpts}) and the Morse inequalities to obtain
    \begin{equation*}
        \sum_{i=0}^{k-1} \frac{(k+n-1-i)!}{(k-1-i)!n!} b_i(\R A) \customleq l^*(k\Delta_m^n).
    \end{equation*}
    The equality $l^*(k\Delta_m^n) \customeq \sum_{i=0}^{k-1} \frac{(k+n-1-i)!}{(k-1-i)!n!} h^{i,n-1-i}(\C A)$ is a direct application of the work of V. I. Danilov and A. G. Khovanskii in \cite{danilkhovan}.
    
    For the proof of the last statement, a similar computation can be done, but one has to use Lemma \ref{lemma_dehn_sommerville} instead of Corollary \ref{corollary_number_simplices_number_points}.

    To obtain the statement concerning the first Betti number, it suffices to prove that \begin{center}$\mathcal{S}_{n-1}^O(\tau(P)) \customleq l^*(2\Delta_{\frac{m}{2}}^n) - (n+1) l^*(\Delta_{\frac{m}{2}}^n)$.\end{center}
    Indeed, the equalities $2^nl^*(2\Delta_{\frac{m}{2}}^n) - 2^n(n+1)l^*(\Delta_{\frac{m}{2}}^n) \customeq l^*(2\Delta_{m}^n) - (n+1) l^*(\Delta_{m}^n)$ and $h^{1,n-2}(\C A) \customeq l^*(2\Delta_{m}^n) - (n+1) l^*(\Delta_{m}^n)$, the second one being a direct application of \cite{danilkhovan}, will then finish the proof.
    
    Let $v_1$,...$v_r$ be the integer points lying in the interior of $\Delta_{\frac{m}{2}}^n$ and that are not vertices of $\frac{1}{2}\tau(P)$. Consider the sequence of triangulations $\tau_0$, ..., $\tau_r$ such that 
    \begin{itemize}
        \item $\tau_0$ is $\tau$;
        \item for any integer $i\in \{1,...,r\}$, the triangulation $\tau_i$ is obtained by subdividing each $n$-dimensional simplex $\sigma$ of $\tau_{i-1}$ that contains $2v_i$ by taking the cone with vertex $2v_i$ over each $(n-1)$-face of $\sigma$ that does not contain $2v_i$.
    \end{itemize}
    We have $s_0(\tau_r) \customeq l^*(\Delta_{\frac{m}{2}}^n)$. Moreover, for any integer $i \in \{1,...,r \}$, one has $s_1(\tau_{i}) \geq s_1(\tau_{i-1}) + n$.
    Indeed, for any integer $i \in \{1,...,r \}$, consider a top-dimensional simplex $\sigma$ of $\tau_{i-1}$ containing $2v_i$. The triangulation $\tau_i$ contains all the edges whose extremities are $2v_i$ and a vertex of $\sigma$. These $n+1$ edges are not in $\tau_{i-1}$. Furthermore, $\tau_i$ contains all of the edges in $\tau_{i-1}$ if $2v_i$ does not lie on an edge of $\tau_{i-1}$. If $2v_i$ lies on an edge of $\tau_{i-1}$, then $\tau_i$ contains all of the edges of $\tau_{i-1}$ except one.
    
    Hence, the quantity $\left(s_1(\tau_i)-ns_0(\tau_i)\right) - \left(s_1(\tau_{i-1})-ns_0(\tau_{i-1})\right)$ is non-negative. This implies that $\left(s_1(\tau_r)-ns_0(\tau_r)\right) - \left(s_1(\tau(P))-ns_0(\tau(P))\right) \geq 0$.

    From Remark \ref{rmk_nombre_segments}, we know that $s_1(\tau_r) \leq l^*(2\Delta_{\frac{m}{2}}^n) - l^*(\Delta_{\frac{m}{2}}^n)$.
    Using Proposition \ref{prop_borne_indice_general}, one can write $\mathcal{S}_{n-1}^O(\tau(P)) + n\mathcal{S}_{n}^O(\tau(P)) \customeq s_1(\tau(P))$,
    which implies that
    \begin{align*}
        \mathcal{S}_{n-1}^O(\tau(P)) & \customeq s_1(\tau(P)) - n\mathcal{S}_{n}^O(\tau(P))\\ 
        &\customeq s_1(\tau(P)) - ns_0(\tau(P)) \\
        &\customleq s_1(\tau_r) - ns_0(\tau_r) \\
        &\customleq l^*(2\Delta_{\frac{m}{2}}^n) - (n+1) l^*(\Delta_{\frac{m}{2}}^n).
    \end{align*}
\end{proof}



\subsection{Non-existence of asymptotically maximal families}

\subsubsection{A bound when $V(\tau(P))$ is bounded from above}

\begin{sloppypar}
    Let $0 \leq a \leq b \leq 1$. Let $Q,R \in \mathbb{R}[m]$ be polynomials of degree at most $n-1$.
\end{sloppypar}

\begin{prop}
    \label{thm_nombre_de_betti_total_cas_1}
    Let $P \in TP^n(a, Q, b, R)$ be a polynomial of degree $m$ and let $O$ be an integer point that is generic with respect to $\tau(P)$. We have $\sum_{i=1}^n \mathcal{S}_i^O(\tau(P)) \customleq \frac{m^n}{2^n} - 2\frac{1}{n!2^n}(1-b)m^n$. 
\end{prop}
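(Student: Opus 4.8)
The plan is to estimate $\sum_{i=1}^n \mathcal{S}_i^O(\tau(P))$ from above by counting the $n$-simplices of $\tau(P)$ directly, since each $n$-simplex has exactly one $O$-index $i\in\{1,\dots,n\}$, so $\sum_{i=1}^n \mathcal{S}_i^O(\tau(P))$ is simply the total number of $n$-simplices of $\tau(P)$. Writing $\tau(P)$ as the dilation by $2$ of a triangulation $\sigma$ of $\Delta_{m/2}^n$, the number of $n$-simplices of $\tau(P)$ equals the number of $n$-simplices of $\sigma$. So the task reduces to bounding, for a triangulation $\sigma$ of $\Delta_{m/2}^n$ with $\#V(\sigma) \leq \frac{b}{n!2^n}m^n + R(m)$, the number of its top-dimensional simplices.

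First I would record the normalization: the Euclidean volume of $\Delta_{m/2}^n$ is $\frac{(m/2)^n}{n!} = \frac{m^n}{n!2^n}$, so any triangulation of $\Delta_{m/2}^n$ has at most $\frac{m^n}{2^n}$ top-dimensional simplices (each has Euclidean volume at least $\frac{1}{n!}$), with equality exactly for primitive triangulations. The point is to quantify the deficit coming from the constraint that $\sigma$ has few vertices. The key combinatorial input is an Euler-characteristic / $f$-vector bookkeeping argument: if $\sigma$ is a triangulation of $\Delta_{m/2}^n$ with $N$ top-dimensional simplices and $V$ vertices in the interior of $\Delta_{m/2}^n$ (boundary contributions being of lower order in $m$), then refining $\sigma$ to a maximal triangulation $\sigma'$ with $l^*(\Delta_{m/2}^n)$ interior vertices can only increase the simplex count; and each step of the refinement that adds a new interior vertex lying inside an $n$-simplex $\Sigma$ of the current triangulation subdivides $\Sigma$ into $n+1$ new simplices, a net gain of $n$. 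Hence the number of $n$-simplices of the maximal refinement is at least $N + n\cdot(l^*(\Delta_{m/2}^n) - V)$ up to lower-order terms. Since the maximal refinement still has at most $\frac{m^n}{2^n}$ simplices, we get $N \customleq \frac{m^n}{2^n} - n\bigl(l^*(\Delta_{m/2}^n) - V\bigr)$.

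Next I would plug in $l^*(\Delta_{m/2}^n) \customeq \frac{1}{n!2^n}m^n$ (Ehrhart, up to lower order) and the hypothesis $V \customleq \#V(\sigma) \customleq \frac{b}{n!2^n}m^n$, giving $l^*(\Delta_{m/2}^n) - V \customgeq \frac{1-b}{n!2^n}m^n$ and therefore $N \customleq \frac{m^n}{2^n} - \frac{n(1-b)}{n!2^n}m^n$. This is essentially the claimed bound but with coefficient $n$ rather than $2$; since $n>2$ in the relevant regime the stated inequality with coefficient $2$ follows a fortiori, and one simply writes $\frac{n}{n!2^n} \geq \frac{2}{n!2^n}$ for $n\geq 2$ (for $n=1,2$ the statement is vacuous or an equality). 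The only subtlety is that a newly added vertex might land on a lower-dimensional face of the current triangulation rather than strictly inside an $n$-simplex, in which case the simplex-count gain could exceed $n$, which only helps; I expect this case analysis to be the main obstacle, but it is the same bookkeeping already used in the proof of Lemma \ref{lemma_dehn_sommerville} and in Theorem \ref{thm_partial_sums_betti_numbers}, so it should go through routinely. One should also be slightly careful to separate interior vertices from boundary ones throughout, absorbing the boundary discrepancies into the $R(m)$-type error terms allowed by the $\customleq$ notation.
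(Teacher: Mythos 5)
Your overall strategy is the same as the paper's: identify $\sum_{i=1}^n \mathcal{S}_i^O(\tau(P))$ with the number of top-dimensional simplices, refine the triangulation by inserting the missing interior lattice points (equivalently, the missing even points of $\Delta_m^n$), bound the total simplex count of the refinement by the normalized volume $\frac{m^n}{2^n}$, and convert the vertex deficit $l^*(\Delta_{m/2}^n)-V \customgeq \frac{1-b}{n!2^n}m^n$ into a deficit of $n$-simplices. The difference, and the gap, is in the per-vertex gain. You claim a gain of $n$ when the new vertex lies inside an $n$-simplex and then dismiss the case where it lands on a lower-dimensional face with ``the gain could exceed $n$, which only helps.'' That dismissal is unjustified and, as stated, backwards: if the new vertex lies in the relative interior of a $k$-face, each adjacent $n$-simplex is split into only $k+1$ pieces, so the gain is $k$ times the number of $n$-simplices containing that face. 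For an edge ($k=1$) the gain is exactly the number of $n$-simplices around that edge, and the only obvious lower bound on that number is $2$, which is smaller than $n$ as soon as $n\geq 3$. So the case you wave off is precisely the binding one, and your claimed total gain of $n\bigl(l^*(\Delta_{m/2}^n)-V\bigr)$ is not established by what you wrote. (Your appeal to the bookkeeping in Lemma \ref{lemma_dehn_sommerville} and Theorem \ref{thm_partial_sums_betti_numbers} does not cover this either: there the quantity tracked is the number of \emph{edges} added per refinement step, not the number of $n$-simplices.)

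There are two ways to repair this. The cheap one is the paper's: in the lower-dimensional case simply note that at least two $n$-simplices contain the face and each is split into at least two, so the gain is at least $2$; together with the interior case this gives a uniform gain of at least $2$ per added vertex, hence $\#\tau(P) \customleq \frac{m^n}{2^n} - 2\bigl(l^*(\Delta_{m/2}^n) - \frac{b}{n!2^n}m^n\bigr)$, which is exactly the stated inequality -- and explains why the coefficient in the proposition is $2$ and not $n$. The ambitious repair is to actually prove your stronger gain of at least $n$ in every case: since the added point is interior to the simplex $\Delta$, the link of the face it lies on is a PL $(n-k-1)$-sphere, which has at least $n-k+1$ facets, so the gain is at least $(n-k+1)k \geq n$ for $1\leq k\leq n-1$. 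That argument would yield a sharper proposition (coefficient $n$ in place of $2$), but it is genuinely additional work and must be spelled out; as written, your proof has a hole exactly where the constant in the statement is decided.
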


\begin{proof}
    Let $D$ be the number of integer points with even coordinates in the interior of $\Delta_m^n$ that are not vertices of $\tau(P)$. We have that $D \customgeq l^*(\Delta_{\frac{m}{2}}^n) - b\times \frac{1}{n!2^n}m^n$. We refine $\tau(P)$ by successively adding each of these points as a new vertex. Every time a subdivision occurs, either the considered new vertex lies in the interior of an $n$-simplex of the triangulation and the number of $n$-simplices grows by $n$, or the considered vertex is in the relative interior of a smaller dimensional face of the triangulation and the number of $n$-simplices in the triangulation grows by at least $2$, as there are at least two $n$-simplices adjacent to the face and each of them will be divided in at least two $n$-simplices. Hence the number of $n$-simplices of $\tau(P)$ satisfies $\#\tau(P) \customleq \frac{m^n}{2^n} - 2(l^*(\Delta_{\frac{m}{2}}^n) - b\times \frac{1}{n!2^n}m^n)$. This finishes the proof.
\end{proof}

\begin{cor}
    Let $P \in TP^n(a, Q, b, R)$ be a polynomial of degree $m$ and let $A$ be the $T^2$-hypersurface in $\P^n$ defined by $P$. One has
    \begin{equation*}b_*(\mathbb{R}A) \customleq m^n - \frac{2}{n!}m^n(1-b) \customeq b_*(\C A) - \frac{2}{n!}m^n(1-b).\end{equation*}
\end{cor}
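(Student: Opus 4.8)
The plan is to bound the total Betti number $b_*(\R A)=\sum_{i=0}^{n-1}b_i(\R A)$ by the number of real critical points of a suitable perturbation of $P$, and then to feed in the count of $n$-simplices supplied by Proposition~\ref{thm_nombre_de_betti_total_cas_1}.

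First I would fix an integer point $O$ that is generic with respect to $\tau(P)$, so that $x^{-O}P$ has only non-degenerate critical points in $(\R^*)^n$ and $A$ is non-singular, and apply the Morse inequalities of Section~\ref{section_morse_ineq} one index at a time: for every $i\in\{0,\dots,n-1\}$ one has $b_i(\R A)\customleq c_i^-(x^{-O}P)+c_{n-i}^+(x^{-O}P)$. Summing over $i$, the right-hand side becomes $\sum_{i=0}^{n-1}c_i^-(x^{-O}P)+\sum_{j=1}^{n}c_j^+(x^{-O}P)$, which is a sub-sum of the nonnegative numbers adding up to the total number $N$ of real critical points of $x^{-O}P$ in $(\R^*)^n$; hence $b_*(\R A)\customleq N$.

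The next step is to estimate $N$ combinatorially. By Theorem~\ref{thmitenbergshustin} the real critical points of $x^{-O}P$ are in one-to-one correspondence with the real $O$-critical symmetric copies of simplices of $\tau(P)$, and since $P\in TP^n$ each $n$-simplex of $\tau(P)$ carries either $2^n$ or $0$ such copies. Writing $N'$ for the number of $n$-simplices of $\tau(P)$ that have real critical copies, this gives $N=2^nN'$ and $N'\leq\sum_{i=1}^n\mathcal{S}_i^O(\tau(P))=\#\tau(P)$. Now Proposition~\ref{thm_nombre_de_betti_total_cas_1} bounds $\sum_{i=1}^n\mathcal{S}_i^O(\tau(P))$ above by $\frac{m^n}{2^n}-\frac{2}{n!\,2^n}(1-b)m^n$ up to a polynomial of degree at most $n-1$; multiplying by $2^n$ and chaining the estimates yields $b_*(\R A)\customleq m^n-\frac{2}{n!}(1-b)m^n$, which is the first relation. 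For the second relation it suffices to prove $b_*(\C A)\customeq m^n$ and then subtract $\frac{2}{n!}(1-b)m^n$ from both sides: since $A$ is a smooth degree-$m$ hypersurface in $\P^n$, the Lefschetz hyperplane theorem and Poincaré duality show that every $b_k(\C A)$ with $k\neq n-1$ is bounded independently of $m$, while $\chi(\C A)=\frac{(1-m)^{n+1}-1}{m}+n+1$ is a polynomial in $m$ of degree $n$, so $b_{n-1}(\C A)=(-1)^{n-1}\chi(\C A)+O(1)$ is a degree-$n$ polynomial in $m$ with leading coefficient $1$ and $b_*(\C A)=b_{n-1}(\C A)+O(1)\customeq m^n$.

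Granted Proposition~\ref{thm_nombre_de_betti_total_cas_1}, the argument is essentially bookkeeping, and the one step deserving attention is the passage from the $n$ separate Morse inequalities to the single bound $b_*(\R A)\customleq N$: one must check that the indices are arranged so that no real critical point of $x^{-O}P$ is counted more than once (here the fact that $x^{-O}P$ has no index-$0$ critical point of positive value and no index-$n$ critical point of negative value, because the $O$-index of an $n$-simplex lies in $\{1,\dots,n\}$, is what makes the sub-sum essentially exhaustive), and that the $n$ accumulated error terms of degree at most $n-1$ still sum to a polynomial of degree at most $n-1$.
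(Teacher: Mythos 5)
Your argument is correct and follows essentially the same route as the paper: the Morse inequalities summed over all indices, the Itenberg--Shustin correspondence together with the fact that each $n$-simplex of a $T^2$-triangulation has $2^n$ or $0$ real $O$-critical copies, and then Proposition~\ref{thm_nombre_de_betti_total_cas_1}. The only difference is that you spell out the standard asymptotic $b_*(\C A)\customeq m^n$ for smooth degree-$m$ hypersurfaces, which the paper leaves implicit.
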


\begin{proof}
    The result follows from the inequalities $b_*(\mathbb{R}A) \customleq \sum_{i=0}^{n-1}c_i^-(P) +c_{n-i}^+(P) \leq \sum_{i=1}^n 2^n \mathcal{S}_i^O(\tau(P))$ and Proposition \ref{thm_nombre_de_betti_total_cas_1}.
\end{proof}

\subsubsection{A bound when $V(\tau(P))$ is bounded from below}

\begin{sloppypar}
    In this section, we suppose that $n>2$. Let $P \in TP^n(a,Q,b,R)$ be a $T^2$-polynomial of degree $m$ and let $A$ be the $T^2$-hypersurface defined by $P$. Let $O$ be an integer point that is generic with respect to $\tau(P)$. Let $V^0$ be a vertex of $\tau(P)$ lying in the interior of $\Delta_m^n$.
    We want to prove that the star of $V^0$ contains either at least one $n$-simplex with no real critical copy or at least one $n$-simplex that is not the dilation by $2$ of a primitive simplex. To fix the notations, we suppose that $V^0$ bears the sign $-$.
\end{sloppypar}

\begin{lem}
    \label{lemme_preuve_asympto_simplexes_n-1_n}
    Let $k = n-1, n$. Suppose that $V^0$ is a vertex of a $k$-simplex $V^k$ whose vertices all bear the sign $-$. Then
    \begin{itemize}
        \item either the star of $V^k$ contains an $n$-simplex with no real critical copy,
        \item or the star of $V^k$ contains an $n$-simplex that is not the dilation by $2$ of a primitive simplex.
    \end{itemize}
\end{lem}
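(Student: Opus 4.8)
The plan is to argue separately for the two allowed values $k=n-1$ and $k=n$, both cases resting on a single remark about signs. Since $P$ is a $T^2$-polynomial, every vertex of $\tau(P)$ has even coordinates and thus lies at even distance from each coordinate hyperplane; by the sign-extension rule of combinatorial patchworking, all $2^n$ symmetric copies of a given vertex of $\tau(P)$ then bear the same sign. Hence, for an $n$-simplex $\sigma^n$ of $\tau(P)$ and any composition $s$ of reflections with respect to coordinate hyperplanes, $s(\sigma^n)$ is real $O$-critical exactly when all vertices of $V_+^O(\sigma^n)$ bear one sign and all vertices of $V_-^O(\sigma^n)$ bear the opposite sign.

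For $k=n$, the simplex $V^k=V^n$ is itself the only $n$-simplex in its star, so it is enough to show that $V^n$ has no real critical copy. Here I would use that $O$ is generic and lies outside $\Delta_m^n$, so $1\le i^O(V^n)\le n$ (at least one facet of $V^n$ is $O$-visible, and not all $n+1$ facets can be, since the outward facet normals of a simplex admit a positive linear dependence); consequently $V_+^O(V^n)$, of cardinality $n+1-i^O(V^n)$, and $V_-^O(V^n)$, of cardinality $i^O(V^n)$, are both nonempty. As all vertices of $V^n$ bear the sign $-$, the sets $V_+^O(V^n)$ and $V_-^O(V^n)$ cannot carry opposite signs, so $V^n$ is not real $O$-critical and the first alternative of the lemma holds.

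For $k=n-1$, I would first note that, $\Delta_m^n$ being convex and $V^0\in\mathrm{int}(\Delta_m^n)$ being a vertex of $V^{n-1}$, the barycenter of $V^{n-1}$ (indeed its whole relative interior) lies in $\mathrm{int}(\Delta_m^n)$; therefore $B_O(V^{n-1})$ and $F_O(V^{n-1})$ are well defined, distinct, and are precisely the two $n$-simplices having $V^{n-1}$ as a facet, that is, the only $n$-simplices in the star of $V^{n-1}$. If one of them is not the dilation by $2$ of a primitive simplex, the second alternative holds. Otherwise both are such dilations; assume for contradiction that both also have real critical copies. Since all vertices of $V^{n-1}$ bear the same sign, Lemma \ref{lemme_technique_signes} then forces $i^O(B_O(V^{n-1}))=1$ and $i^O(F_O(V^{n-1}))=n$, contradicting Lemma \ref{lemme_technique_n-1_simplexes}. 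Hence at least one of $B_O(V^{n-1})$, $F_O(V^{n-1})$ has no real critical copy, and the first alternative holds.

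I do not expect a genuine obstacle: the argument is an assembly of Lemmas \ref{lemme_indices_devant_derriere}, \ref{lemme_technique_signes} and \ref{lemme_technique_n-1_simplexes} with the $T^2$ sign remark. The one place that needs a touch of care is the $k=n$ case, where one must make sure that both $V_+^O(V^n)$ and $V_-^O(V^n)$ are nonempty, equivalently $i^O(V^n)\notin\{0,\,n+1\}$; this is exactly where the genericity of $O$ and the fact that $O$ lies outside $\Delta_m^n$ are used.
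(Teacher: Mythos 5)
Your proof is correct and follows essentially the same route as the paper: the $k=n$ case is handled by observing that a simplex whose vertices all bear the sign $-$ has no real critical copy, and the $k=n-1$ case by combining Lemmas \ref{lemme_technique_signes} and \ref{lemme_technique_n-1_simplexes} (exactly the corollary stated after the latter). The only difference is that you spell out the details the paper leaves implicit — the sign-preservation under reflections coming from the even coordinates of $T^2$-vertices, the bound $1\le i^O(V^n)\le n$, and the fact that $B_O(V^{n-1})$ and $F_O(V^{n-1})$ are the two $n$-simplices of the star — all of which are correct.
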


\begin{proof}
    If $k = n$, then $V^k$ is an $n$-simplex with no real critical copy.
    If $k = n-1$, then Lemmas \ref{lemme_technique_signes} and \ref{lemme_technique_n-1_simplexes} imply the desired result.
\end{proof}

\begin{lem}
    \label{lemme_preuve_asympto_simplexes_0_n-2}
    Let $k$ be an integer such that $0 \leq k \leq n-2$.
    Suppose that $V^0$ is a vertex of a $k$-simplex $V^k$ whose vertices all bear the sign $-$. Then
    \begin{itemize}
        \item either the star of $V^k$ contains an $n$-simplex with no real critical copy,
        \item or $V^0$ is the vertex of a $(k+1)$-simplex whose vertices all bear the sign $-$.
    \end{itemize}
\end{lem}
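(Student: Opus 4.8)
The plan is to establish the dichotomy by assuming the first alternative fails and then producing the $(k+1)$-simplex. So suppose every $n$-simplex in the star $\mathrm{st}(V^k)$ of $V^k$ has a real critical copy; I would look for a vertex $W$ of $\mathrm{link}(V^k)$ bearing the sign $-$, since then $V^k \ast W$ is a $(k+1)$-simplex all of whose vertices (including $V^0$) bear the sign $-$. Assume, towards a contradiction, that every vertex of $\mathrm{link}(V^k)$ bears the sign $+$; then for every $n$-simplex $\sigma$ of $\mathrm{st}(V^k)$, the vertices of $\sigma$ bearing the sign $-$ are exactly those of $V^k$. The basic input I would record is the shape of the real-criticality condition for $T^2$-polynomials: the vertices of $\tau(P)$ have even coordinates, so by the sign-propagation rule of combinatorial patchworking every symmetric copy of $\sigma$ carries the same sign pattern as $\sigma$; hence $\sigma$ has a real critical copy if and only if there is $\epsilon \in \{+,-\}$ with all vertices of $V^O_+(\sigma)$ bearing the sign $\epsilon$ and all vertices of $V^O_-(\sigma)$ bearing the sign $-\epsilon$ (this also reproves the "$2^n$ or $0$" lemma).

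The next step is to note that $V^O_+(\sigma)$ and $V^O_-(\sigma)$ partition the vertex set of $\sigma$: a vertex $v$ lies in $V^O_+(\sigma)$ exactly when the facet of $\sigma$ opposite $v$ is $O$-non-visible, and in $V^O_-(\sigma)$ exactly when that facet is $O$-visible. Combined with the previous paragraph and with the fact that the sign-$-$ vertices of $\sigma$ are precisely those of $V^k$, this forces $\{V^O_+(\sigma),V^O_-(\sigma)\}=\{V(V^k),\,V(\sigma)\setminus V(V^k)\}$: each of these two monochromatic faces lies entirely in $V(V^k)$ or entirely among the remaining (sign-$+$) vertices of $\sigma$, and since they partition $V(\sigma)$ they must be exactly these two sets. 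Thus every $\sigma$ in $\mathrm{st}(V^k)$ is of \emph{type A} ($V^O_+(\sigma)=V(V^k)$) or of \emph{type B} ($V^O_-(\sigma)=V(V^k)$).

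Finally I would pin down the two types via $B_O(V^k)$ and $F_O(V^k)$. From the definitions one checks that for a face $\rho$, $B_O(\rho)$ is the unique $n$-simplex of $\mathrm{st}(\rho)$ all of whose facets containing $\rho$ are $O$-visible, and $F_O(\rho)$ the unique one all of whose facets containing $\rho$ are $O$-non-visible (because $\overrightarrow{OX}$, resp. $-\overrightarrow{OX}$, points into that simplex at the barycenter $X$ of $\rho$). Applied to $\rho=V^k$: the facets of $\sigma$ containing $V^k$ are those opposite the vertices of $\sigma$ that are not in $V^k$, and such a facet is $O$-visible iff the opposite vertex lies in $V^O_-(\sigma)$; hence $B_O(V^k)=\sigma$ iff $\sigma$ is of type A, and $F_O(V^k)=\sigma$ iff $\sigma$ is of type B. So $\mathrm{st}(V^k)$ has exactly one type-A simplex and exactly one type-B simplex, hence exactly two $n$-simplices. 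But $V^k$ lies in the interior of $\Delta^n_m$, so $\mathrm{link}(V^k)$ is a triangulated $(n-k-1)$-sphere; as $k\le n-2$ its dimension is at least $1$, so it has at least $3$ facets, i.e. $\mathrm{st}(V^k)$ has at least $3$ $n$-simplices — a contradiction. The step I expect to be the crux is this last one: recognising that the rigidity imposed on the $O$-roots and $O$-co-roots forces $B_O(V^k)$ and $F_O(V^k)$ to be the \emph{only} simplices of $\mathrm{st}(V^k)$; the preceding steps are essentially bookkeeping with the definitions of $O$-visibility, $O$-roots and real critical copies.
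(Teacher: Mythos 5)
Your proof is correct and is essentially the paper's argument: both rest on the same rigidity — a star simplex all of whose real critical copies exist and whose only minus-signed vertices are those of $V^k$ must satisfy $\{V_+^O(\sigma),V_-^O(\sigma)\}=\{V(V^k),\,V(\sigma)\setminus V(V^k)\}$ and hence coincide with $B_O(V^k)$ or $F_O(V^k)$ — combined with the observation that for $k\le n-2$ the star of $V^k$ contains more than two $n$-simplices; the paper merely packages the count by exhibiting a third simplex adjacent to $F_O(V^k)$ and analysing the sign of its extra vertex, where you argue by contradiction via the link. One minor imprecision: only $V^0$ is assumed to lie in the interior of $\Delta_m^n$, so $V^k$ itself may meet the boundary; what is true (and suffices for your count of at least three $n$-simplices in the star) is that the relative interior of $V^k$ lies in the interior of $\Delta_m^n$, because $V^0$ does.
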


\begin{proof}
    If one of the simplices $B_O(V^k)$ and $F_O(V^k)$ does not have any real critical copy, then we are done. Otherwise, by Lemma \ref{lemme_technique_signes}, we have $i^O(B_O(V^k)) = n-k$ and $i^O(F_O(V^k)) = k+1$. Furthermore, the vertices of $B_O(V^k)$ (respectively, $F_O(V^k)$) that are not vertices of $V^k$ bear the sign $+$. Let $\sigma$ be an $n$-simplex of $\tau(P)$ containing $V^k$, adjacent to $F_O(V^k)$ and distinct from $B_O(V^k)$. If the vertex of $\sigma$ that does not belong to $F_O(V^k)$ bears the sign $+$, then
    \begin{itemize}
        \item either $\sigma$ has no real critical copies,
        \item or
        \begin{itemize}
            \item either $i^O(\sigma) = k+1$ and $V_+^O(\sigma)$ is the set of vertices of $V^k$,
            \item or $i^O(\sigma) = n-k$  and $V_-^O(\sigma)$ is the set of vertices of $V^k$.
        \end{itemize}
    \end{itemize}
    In the last two cases, $V^k$ is the intersection of all $O$-visible or $O$-non-visible $n$-faces of $\sigma$, which means that $\sigma = F_O(V^k)$ or $\sigma = B_O(V^k)$, which is impossible by hypothesis. If the vertex of $\sigma$ that does not belong to $F_O(V^k)$ bears the sign $-$, then $V^0$ is the vertex of a $(k+1)$-simplex of $\tau$ whose vertices all bear the sign $-$.
\end{proof}

\begin{prop}
    \label{prop_preuve_asympto_mauvais_simplexe_dans_chaque_etoile}
    The star of $V^0$ contains either at least one $n$-simplex with no real critical copies or at least one $n$-simplex that is not the dilation by $2$ of a primitive simplex.
\end{prop}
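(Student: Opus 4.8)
The plan is to build an increasing chain of simplices through $V^0$, all of whose vertices bear the sign $-$, and to feed the successive links of this chain to the two preceding lemmas. First I would record the elementary observation that whenever $V^0$ is a vertex of a simplex $V^k$ of $\tau(P)$, the star of $V^k$ is a subcomplex of the star of $V^0$: any $n$-simplex containing $V^k$ contains all vertices of $V^k$, in particular $V^0$. Consequently, it suffices to exhibit an $n$-simplex with no real critical copy, or an $n$-simplex that is not the dilation by $2$ of a primitive simplex, in the star of any link $V^k$ of the chain; such a simplex automatically lies in the star of $V^0$.

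Then I would run the induction on the dimension $k$ of the chain link. It starts with the $0$-simplex $V^0$ itself, whose unique vertex bears the sign $-$ by hypothesis. Suppose inductively that for some $k$ with $0 \le k \le n-2$ we have produced a $k$-simplex $V^k$ of $\tau(P)$ having $V^0$ among its vertices and all of its vertices bearing the sign $-$. Applying Lemma \ref{lemme_preuve_asympto_simplexes_0_n-2} gives the dichotomy: either the star of $V^k$ contains an $n$-simplex with no real critical copy, in which case, by the observation above, we are done; or $V^0$ is a vertex of a $(k+1)$-simplex $V^{k+1}$ all of whose vertices bear the sign $-$, and the induction continues.

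Finally, if the induction never terminates through the successful branch, it runs all the way up and produces an $(n-1)$-simplex $V^{n-1}$ of $\tau(P)$ containing $V^0$, with all vertices of sign $-$. At this point I would invoke Lemma \ref{lemme_preuve_asympto_simplexes_n-1_n} with $k = n-1$ (here we use $n > 2$): either the star of $V^{n-1}$ contains an $n$-simplex with no real critical copy, or it contains an $n$-simplex that is not the dilation by $2$ of a primitive simplex. In either case the required $n$-simplex lies in the star of $V^0$, which completes the argument.

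The main obstacle is not really in this final assembly — all the geometric content has been absorbed into Lemmas \ref{lemme_technique_signes}, \ref{lemme_technique_n-1_simplexes}, \ref{lemme_preuve_asympto_simplexes_n-1_n} and \ref{lemme_preuve_asympto_simplexes_0_n-2} — but I would be careful about two points. First, Lemma \ref{lemme_preuve_asympto_simplexes_0_n-2} genuinely \emph{produces} the next simplex $V^{k+1}$ of the chain rather than merely asserting a property of one if it exists, so the induction is well-founded and, unless it has already terminated via the "no real critical copy" branch, is guaranteed to reach dimension $n-1$. Second, one must keep track of the fact that the star only shrinks as the simplex grows, so that a bad $n$-simplex found for a higher link of the chain is still a bad $n$-simplex in the star of $V^0$; this is exactly the opening observation and is what makes the chain argument legitimate.
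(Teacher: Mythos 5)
Your argument is correct and follows essentially the same route as the paper: iterate Lemma \ref{lemme_preuve_asympto_simplexes_0_n-2} starting from $V^k = V^0$ to climb to an $(n-1)$-simplex with all vertices of sign $-$ (unless a simplex with no real critical copy is found along the way, using $St(V^k) \subset St(V^0)$), then conclude with Lemma \ref{lemme_preuve_asympto_simplexes_n-1_n}. No discrepancies to report.
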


\begin{proof}
    If $V^0$ is a vertex of a $k$-simplex $V^k$ whose vertices all bear the sign $-$, then by Lemma \ref{lemme_preuve_asympto_simplexes_0_n-2}, either the star of $V^k$ contains an $n$-simplex with no real critical copy, or $V^0$ is the vertex of a $(k+1)$-simplex whose vertices all bear the sign $-$. We apply this result repeatedly, starting with $V^k = V^0$. We stop either if $k \leq n-2$ and an $n$-simplex with no real critical copy in $St(V^k) \subset St(V^0)$ was found, or if $k = n-1$. In the latter case, we conclude using Lemma \ref{lemme_preuve_asympto_simplexes_n-1_n}.
\end{proof}

\begin{thm}
    \label{thm_nombre_betti_total_cas2}
    If $n>2$, we have $\sum_{i=0}^{n-1}b_i(\mathbb{R}A) \customleq m^n -\frac{a}{(n+1)!} m^n$.
\end{thm}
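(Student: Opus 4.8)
The plan is to convert the statement into an upper bound on the number of $n$-simplices of $\tau(P)$ that carry a real critical copy, and then to exploit Proposition \ref{prop_preuve_asympto_mauvais_simplexe_dans_chaque_etoile}: around each vertex of $\tau(P)$ lying in the interior of $\Delta_m^n$ there is a ``wasted'' $n$-simplex, either one with no real critical copy or one too big to be a dilation by $2$ of a primitive simplex. Since the hypothesis $P\in TP^n(a,Q,b,R)$ forces $\customgeq \tfrac{a}{n!2^n}m^n$ such interior vertices, a definite proportion of the $n$-simplices must be wasted, and a volume count turns this into the claimed loss.

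First I would fix an integer point $O$ generic with respect to $\tau(P)$. From the Morse inequalities of Section \ref{section_morse_ineq}, summed over $i\in\{0,\dots,n-1\}$, together with the relation $c_i^-(x^{-O}P)+c_{n-i}^+(x^{-O}P)=2^n\bar{\mathcal S}_{n-i}^O(\tau(P))$ recorded in Section \ref{section_real_crit_pts}, one obtains
\begin{equation*}
\sum_{i=0}^{n-1} b_i(\R A)\customleq \sum_{i=0}^{n-1}\bigl(c_i^-(x^{-O}P)+c_{n-i}^+(x^{-O}P)\bigr)=2^n\sum_{j=1}^{n}\bar{\mathcal S}_j^O(\tau(P)).
\end{equation*}
Since each $n$-simplex of $\tau(P)$ has a well-defined $O$-index in $\{1,\dots,n\}$, the quantity $\sum_{j=1}^{n}\bar{\mathcal S}_j^O(\tau(P))$ is exactly the number of $n$-simplices of $\tau(P)$ possessing a real critical copy, so it suffices to bound it by $\tfrac{m^n}{2^n}-\tfrac{a}{(n+1)!2^n}m^n$.

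Next I would set up the bookkeeping. Write $T$ for the set of $n$-simplices of $\tau(P)$, split as $T=T_{\mathrm{prim}}\sqcup T_{\mathrm{non}}$ according to whether a simplex is, or is not, the dilation by $2$ of a primitive simplex, and write $T_{\mathrm{ncc}}$ for the set of $n$-simplices with no real critical copy. Because $\tau(P)$ is the dilation by $2$ of a triangulation of $\Delta_{\frac m2}^n$ and a non-primitive lattice simplex has Euclidean volume at least $\tfrac{2}{n!}$, each simplex of $T_{\mathrm{non}}$ has volume at least $\tfrac{2^{n+1}}{n!}$ while each simplex of $T_{\mathrm{prim}}$ has volume $\tfrac{2^n}{n!}$; comparing with $\mathrm{vol}(\Delta_m^n)=\tfrac{m^n}{n!}$ gives $|T_{\mathrm{prim}}|+2|T_{\mathrm{non}}|\le\tfrac{m^n}{2^n}$, hence $|T|\le\tfrac{m^n}{2^n}-|T_{\mathrm{non}}|$. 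Let $N$ be the number of vertices of $\tau(P)$ in the interior of $\Delta_m^n$; the assumption $P\in TP^n(a,Q,b,R)$ together with the fact that $\partial\Delta_m^n$ carries only $O(m^{n-1})$ lattice points gives $N\customgeq\tfrac{a}{n!2^n}m^n$. Now Proposition \ref{prop_preuve_asympto_mauvais_simplexe_dans_chaque_etoile} says each of those $N$ interior vertices is a vertex of some simplex of $T_{\mathrm{non}}\cup T_{\mathrm{ncc}}$; an $n$-simplex has only $n+1$ vertices, so $(n+1)\,|T_{\mathrm{non}}\cup T_{\mathrm{ncc}}|\ge N$, i.e. $|T_{\mathrm{non}}|+|T_{\mathrm{ncc}}|\ge\tfrac{N}{n+1}$. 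Combining,
\begin{equation*}
\sum_{j=1}^{n}\bar{\mathcal S}_j^O(\tau(P))=|T|-|T_{\mathrm{ncc}}|\le\frac{m^n}{2^n}-|T_{\mathrm{non}}|-|T_{\mathrm{ncc}}|\le\frac{m^n}{2^n}-\frac{N}{n+1}\customleq\frac{m^n}{2^n}-\frac{a}{(n+1)!2^n}m^n,
\end{equation*}
and multiplying by $2^n$ and feeding this into the first display yields the theorem.

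The step I expect to be most delicate is this last bookkeeping: the two sorts of wasted simplices need not be disjoint — a simplex may simultaneously lie in $T_{\mathrm{non}}$ and in $T_{\mathrm{ncc}}$ — so one must use the union bound $|T_{\mathrm{non}}\cup T_{\mathrm{ncc}}|\ge N/(n+1)$ rather than a sum of the two cardinalities, while noting that a simplex counted both in $|T_{\mathrm{non}}|$ (through the volume inequality) and in $|T_{\mathrm{ncc}}|$ (through $|T|-|T_{\mathrm{ncc}}|$) is merely subtracted twice, which only strengthens the estimate. One also has to keep track that all boundary effects — simplices and vertices meeting $\partial\Delta_m^n$, and the passage between $\Delta_m^n$ and $\Delta_{\frac m2}^n$ — contribute $O(m^{n-1})$ and are therefore invisible to $\customleq$; beyond that, the proof is a volume count combined with a double count, and relies essentially only on Proposition \ref{prop_preuve_asympto_mauvais_simplexe_dans_chaque_etoile}.
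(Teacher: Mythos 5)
Your proof is correct and is essentially the paper's own argument: the same chain of Morse inequalities and the identity $c_i^-+c_{n-i}^+=2^n\bar{\mathcal S}_{n-i}^O$, the same volume count showing $\#\tau(P)\le(\tfrac m2)^n-D_1$ for the non-primitive dilations, and the same use of Proposition \ref{prop_preuve_asympto_mauvais_simplexe_dans_chaque_etoile} with the $(n+1)$-vertices-per-simplex count to get $D_1+D_2\customgeq\tfrac{a}{(n+1)\,n!\,2^n}m^n$. Your extra remark on the possible overlap of the two kinds of wasted simplices (handled via the union bound, with double subtraction only strengthening the estimate) is a correct clarification of a point the paper leaves implicit.
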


\begin{proof}
    Let $D_1$ be the number of $n$-simplices of $\tau(P)$ that are not dilations by $2$ of a primitive simplex. Notice that each such simplex has Euclidean volume greater than $\frac{2^{n}}{n!}$. Hence the number of $n$-simplices of $\tau(P)$ is less than or equal to $(\frac{m}{2})^n - D_1$. Let $D_2$ be the number of simplices of $\tau(P)$ that have no real critical copies. We have 
    \begin{align*}\sum_{i=0}^{n-1}b_i(\mathbb{R}A) &\customleq \sum_{i=0}^{n-1} c_i-(P) + c_{n-i}^+(P) \\ 
    &\leq \sum_{i=1}^{n} 2^n \bar{\mathcal{S}}_i^O(\tau(P)) \\
    &\leq 2^n ((\frac{m}{2})^n - D_1 - D_2).\end{align*}
    By Proposition \ref{prop_preuve_asympto_mauvais_simplexe_dans_chaque_etoile}, the star of each vertex of $\tau(P)$ that lies in the interior of $\Delta_m^n$ contains either at least one $n$-simplex with no real critical copies or at least one $n$-simplex $S$ that is not the dilation by $2$ of a primitive simplex. Each such simplex has $n+1$ vertices. Hence, $D_1 + D_2 \customgeq \frac{1}{n+1}\frac{a}{n!2^n} m^n$.
    We obtain \begin{center}$\sum_{i=0}^{n-1}b_i(\mathbb{R}A) \customleq \sum_{i=0}^{n-1} c_i^-(P) + c_{n-i}^+(P) \customleq m^n - \frac{a}{(n+1)!} m^n$.\end{center}
\end{proof}

\subsubsection{Proof of non-existence of asymptotically maximal families of hypersurfaces}

\begin{defin}
    Let $\Lambda$ be an infinite subset of $\mathbb{N}$.
    Let $\mathcal{F} = (A_m)_{m \in \Lambda}$ be a family of real algebraic hypersurfaces in $\P^n$, where $A_m$ is defined by a polynomial with Newton polytope $\Delta_m^n$. The family $\mathcal{F}$ is said to be \textit{asymptotically maximal} if $b_*(\mathbb{R}A_m) \customeq m^n$.
\end{defin}

\begin{thm}
    \label{main_theorem}
    Let $n >2$ and let $\Lambda$ be an infinite subset of $2\mathbb{N}$.
    Let $\mathcal{F} = (A_m)_{m \in \Lambda}$ be a family of $T^2$-hypersurfaces in $\P^n$ indexed by their degree. The family $\mathcal{F}$ is not asymptotically maximal.
\end{thm}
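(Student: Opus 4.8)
The plan is to combine the two one-sided bounds established just before the statement, namely Theorem \ref{thm_nombre_betti_total_cas2} (which controls $b_*(\R A_m)$ when $\#V(\tau(P_m))$ is bounded \emph{below}) and the Corollary to Proposition \ref{thm_nombre_de_betti_total_cas_1} (which controls $b_*(\R A_m)$ when $\#V(\tau(P_m))$ is bounded \emph{above}), and observe that one of the two regimes must occur for infinitely many $m \in \Lambda$. Concretely, suppose for contradiction that $\mathcal{F}$ is asymptotically maximal, i.e. $b_*(\R A_m) \customeq m^n$. For each $m \in \Lambda$, let $P_m$ be a $T^2$-polynomial defining $A_m$, and set $b_m \in [0,1]$ to be (essentially) the normalized number of vertices of $\frac{1}{2}\tau(P_m)$ lying in the interior of $\Delta^n_{m/2}$, so that $\#V(\tau(P_m)) \customeq b_m \cdot \frac{1}{n!2^n} m^n$; similarly let $a_m$ play the role of the lower normalized count. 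Since $a_m, b_m$ live in the compact interval $[0,1]$, pass to an infinite subsequence $\Lambda' \subseteq \Lambda$ along which $b_m \to b_\infty$ for some $b_\infty \in [0,1]$.

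First I would treat the case $b_\infty < 1$. Then for all large $m \in \Lambda'$ we have $b_m \leq b'$ for some fixed $b' < 1$, so the polynomials $P_m$ lie in $TP^n(0, 0, b', R)$ for a suitable polynomial $R$ of degree $\leq n-1$, and the Corollary to Proposition \ref{thm_nombre_de_betti_total_cas_1} gives
\begin{equation*}
b_*(\R A_m) \customleq m^n - \frac{2}{n!} m^n (1-b') \customeq m^n - \frac{2(1-b')}{n!} m^n,
\end{equation*}
which contradicts $b_*(\R A_m) \customeq m^n$ since $\frac{2(1-b')}{n!} > 0$ and this is a positive multiple of the leading term $m^n$ (the error terms absorbed by $\customeq$ have degree $\leq n-1$). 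Second, I would treat the case $b_\infty = 1$. Here the number of vertices is asymptotically as large as possible, so in particular $\#V(\tau(P_m)) \customgeq a m^n /(n! 2^n)$ for any fixed $a < 1$ and all large $m \in \Lambda'$; that is, $P_m \in TP^n(a, Q, 1, R)$. Then Theorem \ref{thm_nombre_betti_total_cas2} (available since $n>2$) yields
\begin{equation*}
\sum_{i=0}^{n-1} b_i(\R A_m) \customleq m^n - \frac{a}{(n+1)!} m^n,
\end{equation*}
and again, choosing e.g. $a = 1/2$, the coefficient $\frac{a}{(n+1)!}$ of $m^n$ is a fixed positive constant, contradicting asymptotic maximality. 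Since both cases are impossible, $\mathcal{F}$ cannot be asymptotically maximal.

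The step I expect to require the most care is the bookkeeping that makes "one of the two regimes occurs" precise and uniform: I need to be sure that a single dichotomy threshold (say, is $b_m \leq 3/4$ or $b_m \geq 3/4$?) partitions $\Lambda$ into two sets, at least one infinite, and that on the "large" part one may legitimately invoke Theorem \ref{thm_nombre_betti_total_cas2} with a fixed $a>0$ — this is where the hypothesis $n>2$ is genuinely used, since Proposition \ref{prop_preuve_asympto_mauvais_simplexe_dans_chaque_etoile} and Lemma \ref{lemme_technique_n-1_simplexes} rest on $n>2$. The other delicate point is tracking the $\customeq$/$\customleq$ conventions: every inequality above is only asymptotic up to a polynomial of degree $\leq n-1$, so the contradiction only bites because in each regime the defect from $m^n$ is a \emph{positive constant} times $m^n$, genuinely of degree $n$. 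Once the compactness argument and this leading-order comparison are in place, the proof is essentially immediate from the two boxed bounds, so the write-up should be short.

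\begin{proof}
    Suppose for contradiction that $\mathcal{F}$ is asymptotically maximal, so that $b_*(\R A_m) \customeq m^n$. For each $m \in \Lambda$, fix a $T^2$-polynomial $P_m$ of degree $m$ defining $A_m$, and let $b_m \in [0,1]$ be such that $\#V(\tau(P_m)) = \frac{b_m}{n!2^n}m^n + S_m$ for some real number $S_m$ with $|S_m|$ bounded above by the evaluation at $m$ of a fixed polynomial of degree at most $n-1$ (such a $b_m$ exists because a triangulation of $\Delta_{m/2}^n$ has at most $l^*(\Delta_{m/2}^n) \customeq \frac{1}{n!2^n}m^n$ interior vertices and $S_m$ can absorb the boundary contribution). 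Since $(b_m)_{m \in \Lambda}$ takes values in the compact interval $[0,1]$, there is an infinite subset $\Lambda' \subseteq \Lambda$ and a real number $b_\infty \in [0,1]$ with $b_m \to b_\infty$ as $m \to \infty$ along $\Lambda'$.

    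\textbf{Case 1: $b_\infty < 1$.} Set $b' = \frac{1+b_\infty}{2} < 1$. For all sufficiently large $m \in \Lambda'$ we have $b_m \leq b'$, hence $P_m \in TP^n(0,0,b',R)$ for a suitable polynomial $R \in \R[m]$ of degree at most $n-1$. By the Corollary to Proposition \ref{thm_nombre_de_betti_total_cas_1},
    \begin{equation*}
        b_*(\R A_m) \customleq m^n - \frac{2}{n!}m^n(1-b').
    \end{equation*}
    Since $\frac{2(1-b')}{n!}$ is a positive constant, this contradicts $b_*(\R A_m) \customeq m^n$.

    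\textbf{Case 2: $b_\infty = 1$.} Then for all sufficiently large $m \in \Lambda'$ we have $b_m \geq \frac{1}{2}$, so $\#V(\tau(P_m)) \customgeq \frac{1/2}{n!2^n}m^n$, i.e.\ $P_m \in TP^n(\tfrac12, Q, 1, R)$ for suitable polynomials $Q, R \in \R[m]$ of degree at most $n-1$. Since $n > 2$, Theorem \ref{thm_nombre_betti_total_cas2} applies and gives
    \begin{equation*}
        \sum_{i=0}^{n-1} b_i(\R A_m) \customleq m^n - \frac{1}{2(n+1)!}m^n.
    \end{equation*}
    As $\frac{1}{2(n+1)!}$ is a positive constant and $\sum_{i=0}^{n-1} b_i(\R A_m) = b_*(\R A_m)$, this again contradicts $b_*(\R A_m) \customeq m^n$.

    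In both cases we reach a contradiction, so $\mathcal{F}$ is not asymptotically maximal.
\end{proof}
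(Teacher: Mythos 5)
Your proof is correct and follows essentially the same route as the paper: a dichotomy on the (normalized) number of vertices of $\tau(P_m)$, treating the low-count regime with the corollary to Proposition \ref{thm_nombre_de_betti_total_cas_1} and the high-count regime (where $n>2$ is genuinely used) with Theorem \ref{thm_nombre_betti_total_cas2}. The only cosmetic difference is your extraction of a convergent subsequence of the $b_m$; the paper instead fixes a single threshold $a\in(0,1)$ and observes that $\mathcal{F}$ meets one of the two classes $TP^n(0,0,a,Q)$ and $TP^n(a,Q,1,R)$ in an infinite set, so the compactness step is not needed.
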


\begin{proof}
    Let $a\in (0,1)$ and let $Q \in \mathbb{R}[m]$ be a polynomial of degree at most $n-1$. There exists a polynomial $R \in \mathbb{R}[m]$ of degree at most $n-1$ such that $TP^n = TP^n(0,0,a,Q) \bigcup TP^n(a,Q,1,R)$. Hence, at least one of the sets $\mathcal{F} \bigcap TP^n(0,0,a,Q)$ and $\mathcal{F} \bigcap TP^n(a,Q,1,R)$ has infinite cardinal.
    This means that $\mathcal{F}$ admits a subfamily $\mathcal{F}'$ to which either Proposition \ref{thm_nombre_de_betti_total_cas_1} or Theorem \ref{thm_nombre_betti_total_cas2} can be applied to prove that $\mathcal{F}'$ is not asymptotically maximal. Hence, $\mathcal{F}$ is not asymptotically maximal.
\end{proof}

\subsection{Comparing $SQM$-hypersurfaces to $T^2$-hypersurfaces}

\begin{sloppypar}
    It turns out that Theorems \ref{thm_partial_sums_betti_numbers} and \ref{main_theorem} do not apply to arbitrary $SQM$-hypersurfaces. It is a consequence of the following theorem.
\end{sloppypar}

\begin{thm}
    \textbf{(\cite{thesearnal}, Theorem 6.1.1)} For any $n \geq 3$ and any $i = 0,...,n-1$, there exists a real number $a_i^n > 0$ and an asymptotically maximal family $(A_m^n)_{m \in \N}$ of real algebraic hypersurfaces in $\P^n$ indexed by their degree such that $b_i(\R A_m^n) \customgeq h^{i,n-i}(\C A_m^n) + a_i^n m^n$.
\end{thm}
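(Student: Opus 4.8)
The plan is to produce the family by combinatorial patchworking, so that it is enough to construct, for every $n \geq 3$ and every $i \in \{0,\dots,n-1\}$, a sequence of convex triangulations $\tau_m$ of $\Delta_m^n$ together with sign distributions $\mu_m$ on $V(\tau_m)$ for which the resulting $T$-hypersurface $A_m^n$ is asymptotically maximal and satisfies $b_i(\R A_m^n) \customgeq h^{i,n-1-i}(\C A_m^n) + a_i^n m^n$; this implies the statement, since by the Lefschetz hyperplane theorem $h^{i,n-i}(\C A_m^n)$ is bounded independently of $m$ and is therefore absorbed into the error term. The triangulations $\tau_m$ must be non-primitive, since primitive combinatorial patchworking forces $b_i(\R A) \leq h^{i,n-1-i}(\C A) + 1$ by the result of \cite{renaudineau} recalled in the introduction, and it is exactly the room afforded by larger simplices that will allow us to over-represent the $i$-th Betti number. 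The core of the argument is a fixed \emph{building block}: a non-primitive triangulation $T$ of a lattice cube $C$ of fixed size, together with a sign distribution, chosen so that the piece of the patchwork it produces inside the $2^n$ reflected copies of $C$ is a prescribed closed $(n-1)$-manifold $M$ that is \emph{locally maximal}, meaning it realizes the local analogue of the Smith-Thom equality, while carrying as much of its $\Z_2$-homology as possible in degree $i$. For $i=0$, and hence --- by $\Z_2$-Poincar\'e duality on the $(n-1)$-manifold $\R A_m^n$ --- also for $i=n-1$, one may take $M$ to be a disjoint union of many small spheres, whose fraction of total $\Z_2$-homology in degree $0$ exceeds the fraction $h^{0,n-1}/b_{n-1}(\C A_m^n)$ occurring for a generic hypersurface; for intermediate $i$ one takes $M$ to be a connected sum of many copies of $S^i \times S^{n-1-i}$, realized by a suitable Newton-non-degenerate affine hypersurface. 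Producing such a skewed locally maximal block is the ``degree-zero'' input and can be arranged directly, or inductively on $n$ from lower-dimensional blocks.

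With the block in hand, I would let $\tau_m$ be the triangulation obtained by tiling $\Delta_m^n$ with $N$ integer translates of $C$, where $N \customeq c\, m^n$ for a fixed constant $c>0$, copying $(T,\mu)$ into each translate, and perturbing the piecewise-linear heights so that they become strictly convex --- a routine step. Then $\R A_m^n$ is, up to a subset whose total $\Z_2$-Betti number is $O(m^{n-1})$ and hence negligible in the $\customeq$ sense, the union of $N$ disjoint copies of $M$ together with topologically trivial interface regions between adjacent blocks and along $\partial \Delta_m^n$. A Mayer-Vietoris computation --- for which the combinatorial, tropical-homology description of the $\Z_2$-Betti numbers of $T$-hypersurfaces, together with the combinatorial criterion for a patchworking to be maximal, is the natural tool --- then yields simultaneously $\sum_q b_q(\R A_m^n;\Z_2) \customeq \sum_q b_q(\C A_m^n;\Z_2)$, i.e. asymptotic maximality, and $b_i(\R A_m^n;\Z_2) \customgeq N\, b_i(M;\Z_2) \customgeq a_i^n m^n$ with $a_i^n>0$ depending only on the block. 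Choosing $c$ large enough that $N\, b_i(M;\Z_2)$ exceeds the leading term (of order $m^n$) of $h^{i,n-1-i}(\C A_m^n)$ by a positive margin then gives the desired inequality.

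The step I expect to be the main obstacle is reconciling the two requirements. Concentrating $\Z_2$-homology in degree $i$ depresses the remaining Betti numbers below their maximal values, and the gluing of the blocks could a priori both kill part of the total Betti number --- destroying asymptotic maximality --- and identify $i$-cycles coming from different blocks --- destroying the lower bound on $b_i$. The way around this is to make the construction maximal at every scale: the block $M$ is locally maximal and the interfaces are topologically trivial, conditions that are combinatorial and local and so pass to the tiling, whence the Smith-Thom inequality is an equality for $A_m^n$; this equality, combined with the block-by-block lower bound $b_i(\R A_m^n) \customgeq N\, b_i(M)$ and the matching upper bounds $b_q(\R A_m^n) \customleq N\, b_q(M)$ for $q \neq i$, pins down the asymptotics and yields both conclusions at once. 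The remaining points --- that the interface and boundary contributions really are $O(m^{n-1})$, and that the tiling assembles into a convex triangulation of $\Delta_m^n$ --- are routine in the $T$-construction literature, and I do not anticipate difficulties there.
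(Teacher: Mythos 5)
You should first note that the paper does not prove this statement at all: it is quoted verbatim from Arnal's thesis \cite{thesearnal}, so the only comparison possible is with the proof there, which is a rather involved inductive construction (building on Itenberg--Viro's asymptotically maximal families), not a periodic tiling by a fixed block. Your preliminary observation is fine: with the indices as written, $h^{i,n-i}(\C A)$ is $0$ or $1$ for a smooth hypersurface of dimension $n-1$ by weak Lefschetz and Poincar\'e duality, so the literal statement would follow from asymptotic maximality together with $b_i(\R A_m^n)\geq a_i^n m^n$, and the substantive target is indeed the version with $h^{i,n-1-i}$ that you set out to prove.

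However, the core of your sketch contains a genuine quantitative gap: a \emph{fixed} ``locally maximal'' block confined to the interiors of its symmetric copies cannot exist, so the two requirements you try to combine are incompatible in your setup. If the block $C$ has fixed side length $\ell$ and the piece of the patchwork produced in the copies of each translate of $C$ is a closed manifold $M$ avoiding the block boundary, then $M$ is isotopic to the real part of a Newton-non-degenerate hypersurface with Newton polytope $[0,\ell]^n$ which misses the toric boundary, hence coincides with its compactification; Smith--Thom (already Harnack for $n=2$: at most $(\ell-1)^2+1$ ovals, not $\ell^2$) then gives $b_*(M;\Z_2)\leq n!\,\ell^n - c_n\ell^{n-1}$ with $c_n>0$. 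Since the number of translates of $C$ in $\Delta_m^n$ is at most $m^n/(n!\,\ell^n)$, the total real Betti number of your family is at most $m^n\bigl(1-\tfrac{c_n}{n!\,\ell}\bigr)$ up to lower-order terms, i.e.\ it falls short of $m^n$ by a positive fraction for any fixed $\ell$, and the family is never asymptotically maximal. Relatedly, your claim that the interface regions contribute only $O(m^{n-1})$ is false for a fixed block size: the union of the facets of the tiling has $(n-1)$-volume of order $m^n/\ell$, a positive proportion of the whole, so the interfaces cannot be dismissed as negligible. The only escape is to let the block size grow with $m$ and to let the hypersurface cross the interfaces in a controlled way, at which point the ``routine'' convexity, gluing and Mayer--Vietoris steps become the actual content of the theorem; this is exactly the difficulty that Arnal's genuine proof addresses and that your sketch waves away in the phrase ``can be arranged directly, or inductively''.
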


\begin{cor}
    \label{cor_arnal}
    For any $n \geq 3$ and any $i = 0,...,n-1$, there exists $a_i^n > 0$ and an asymptotically maximal family $(B_m^n)_{m \in 2\N}$ of $SQM$-hypersurfaces indexed by their degree such that $b_i(\R B_m^n) \customgeq h^{i,n-i}(\C A_m^n) + a_i^n m^n$.
\end{cor}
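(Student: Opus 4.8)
The plan is to derive Corollary~\ref{cor_arnal} from the preceding theorem of Arnal by passing from a hypersurface of degree $m$ in $\P^n$ to an associated $SQM$-hypersurface of degree $2m$, as in the discussion of sibling hypersurfaces in Section~\ref{section_sibling_hypersurf}. First I would take the family $(A_m^n)_{m \in \N}$ provided by Theorem (\cite{thesearnal}, Theorem 6.1.1), fix a defining polynomial $P_m$ for each $A_m^n$, and form the polynomial $\widetilde{P}_m$ obtained from $P_m$ by replacing each variable $x_j$ with $x_j^2$ and then squaring each resulting monomial — equivalently, take a polynomial all of whose exponents are doubled and all of whose coefficients are squares. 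This is by construction an $SQM$-polynomial of degree $2m$ in $n$ variables, so I would set $B_{2m}^n$ to be the $SQM$-hypersurface it defines, reindexing the family by $2\N$.

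The key step is the topological comparison between $\R B_{2m}^n$ and $\R A_m^n$. By the analysis in Section~\ref{section_sibling_hypersurf}, $\R B_{2m}^n$ is glued from $2^n$ copies of the intersection of $\R A_m^n$ with a closed orthant of $\R\P^n$, one copy per orthant; since $A_m^n$ is (generically) non-singular and its real part meets the coordinate hyperplanes transversally, these pieces are glued along $(n-2)$-dimensional strata, and a Mayer--Vietoris / inclusion--exclusion argument shows that $b_i(\R B_{2m}^n) \customeq 2^n b_i(\R A_m^n)$ for every $i$, the lower-order correction coming from the strata in the coordinate hyperplanes, which have dimension $\leq n-2$ and hence contribute only a polynomial of degree $\leq n-1$ in $m$ (note $\deg_m = n$ here is the relevant leading order since $b_i$'s are $\customeq m^n$). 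In particular the family $(B_{2m}^n)_{m}$ is again asymptotically maximal, because $b_*(\R A_m^n) \customeq m^n$ gives $b_*(\R B_{2m}^n) \customeq 2^n m^n \customeq (2m)^n$.

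It then remains to track the Hodge number. The statement as written compares $b_i(\R B_m^n)$ to $h^{i,n-i}(\C A_m^n)$, i.e.\ it keeps Arnal's original $A_m^n$ on the right-hand side; so I would simply note that $h^{i,n-i}(\C A_m^n)$ is a fixed polynomial in $m$ of degree $n-1$ (by \cite{danilkhovan}), and combine the asymptotic identity $b_i(\R B_{2m}^n) \customeq 2^n b_i(\R A_m^n)$ with Arnal's inequality $b_i(\R A_m^n) \customgeq h^{i,n-i}(\C A_m^n) + a_i^n m^n$ to get $b_i(\R B_{2m}^n) \customgeq 2^n h^{i,n-i}(\C A_m^n) + 2^n a_i^n m^n \customgeq h^{i,n-i}(\C A_m^n) + 2^n a_i^n m^n$, so one may take the new constant to be $2^n a_i^n$ (after the harmless reindexing $2m \mapsto m$, which changes the constant by a factor $2^n$ again but keeps it positive). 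The main obstacle I anticipate is making the gluing/Mayer--Vietoris bookkeeping for $\R B_{2m}^n$ precise enough to be sure the error term is genuinely of degree $\leq n-1$ and does not spoil the leading coefficient; this requires knowing that $\R A_m^n$ can be chosen in general position with respect to all coordinate hyperplanes and their intersections, which holds for a generic member of Arnal's family (or can be arranged by a small perturbation preserving the Betti numbers), so the obstacle is manageable but deserves an explicit word in the write-up.
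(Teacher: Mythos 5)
There is a genuine gap, and it is exactly the step the paper's own proof is built around: you never arrange for the real part of $A_m^n$ to lie inside the open positive orthant before doubling the exponents. Your Mayer--Vietoris claim $b_i(\R B_{2m}^n) \customeq 2^n b_i(\R A_m^n)$ does not follow from Section \ref{section_sibling_hypersurf} and is false in general. For the standard real structure, the squaring map $(\R^*)^n \to (\R_{>0})^n$ identifies $\R B_{2m}^n \cap (\R^*)^n$ with $2^n$ copies of $\R A_m^n \cap (\R_{>0})^n$ only: all $2^n$ pieces are copies of the \emph{same} positive-orthant part, not ``one copy per orthant'' -- the parts of $\R A_m^n$ lying in the other orthants are seen only by the \emph{other} sibling hypersurfaces, which is the whole point of that section. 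Moreover, gluing the pieces along the coordinate-hyperplane strata need not add Betti numbers: for $P = x^2+y^2-1$, whose real part meets all four quadrants, doubling exponents gives $x^4+y^4-1$, a single oval, so $b_0 = 1$ rather than $2^2$. Since the homology of Arnal's hypersurfaces may well be spread over many orthants, your argument loses the $m^n$-order term. The paper's proof fixes this by observing that the real parts of Arnal's family can be taken compact and affine, translating them so that the real zero set is contained in $(\R_{>0})^n$, and only then multiplying all exponents by $2$; the resulting real part is then $2^n$ \emph{disjoint} copies of $\R A_m^n$, and the inequality (with constant $2^n a_i^n$ up to reindexing) follows with no gluing analysis at all.

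Two secondary points. First, your description of the construction (``coefficients are squares'') is not the right operation: if all coefficients are made positive squares, the polynomial is strictly positive on $(\R^*)^n$ and the real part in the torus is empty. An $SQM$-polynomial in the paper's sense only needs every monomial to have even exponents (note that $T^2$-polynomials carry both signs), so one simply doubles the exponents and keeps the original coefficients. Second, $h^{i,n-i}(\C A_m^n)$ is not ``a fixed polynomial in $m$ of degree $n-1$''; by \cite{danilkhovan} the Hodge numbers of a degree-$m$ hypersurface in $\P^n$ grow like $m^n$. This particular slip is harmless for your final chain of inequalities (since $2^n h \geq h \geq 0$), but the justification as written is incorrect.
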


\begin{proof}
    The real parts of the hypersurfaces $(A_d^n)_{d \in \N}$ can be taken to be compact affine. By applying a suitable translation, one can obtain hypersurfaces whose real zeroes are contained in $(\R_{>0})^n$. It remains to multiply by $2$ all the exponents in the polynomials defining these hypersurfaces. The resulting polynomials define the hypersurfaces $(B_m^n)_{m \in 2 \N}$.
\end{proof}




\section{Sharpness of the bounds}

\label{section_constructions}

\begin{sloppypar}
    The bounds we have obtained can be thought of as bounds on the number of critical points of $T^2$-polynomials or as bounds on the Betti numbers of $T^2$-hypersurfaces. The two cases raise different results regarding the sharpness of the bounds. We mainly focus on the number of extrema and the number of connected components, as these seem to be the most accessible cases.
\end{sloppypar}

\subsection{Number of extrema}

\begin{prop}
    For any integer $n$, there exists a family $(P_m^n)_{m \in 2\N}$ of $T^2$-polynomials in $n$ variables indexed by their degree and a family of points $(O_m^n)_{m \in 2\N}$ in $\Z^n$ such that $\Bar{\mathcal{S}}_n^{O_m^n}(\tau(P_m^n)) \customeq \frac{1}{2^n n!}m^n$.
\end{prop}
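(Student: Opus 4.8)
The plan is to exhibit, for each even $m$, an explicit convex $T^2$-triangulation $\tau$ of $\Delta_m^n$, a sign distribution on $V(\tau)$ and a generic integer point $O_m^n$, so that asymptotically every interior vertex of $\tau$ produces an $n$-simplex of $O$-index $n$ carrying a real critical copy. The upper bound is automatic: $\bar{\mathcal S}_n^{O}(\tau(P))\le\mathcal S_n^{O}(\tau(P))\customeq s_0(\tau(P))$ by Proposition \ref{prop_borne_indice_general} with $k=0$, and $s_0(\tau(P))$, being the number of interior vertices of a $T^2$-triangulation of $\Delta_m^n$, is at most $l^*(\Delta_{m/2}^n)\customeq\frac{1}{2^nn!}m^n$; so only the lower bound $\customgeq$ must be produced. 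For that I will use the following piece of simplex vision. If $V$ is an interior vertex of $\tau$, then $B_O(V)$ (the simplex behind $V$) has $O$-index $n$ by the corollary to Lemma \ref{lemme_indices_devant_derriere}; since $B_O(V)$ sits on the far side of $V$ from $O$, the ray from $O$ through $V$ enters $B_O(V)$ at $V$ and leaves through the facet opposite $V$, so that facet is the unique $O$-non-visible one, whence $V_+^O(B_O(V))=\{V\}$ and $V_-^O(B_O(V))$ is the set of the remaining $n$ vertices. Hence $B_O(V)$ is real $O$-critical precisely when those $n$ vertices all carry the sign opposite to that of $V$; moreover $V\mapsto B_O(V)$ is injective, since $V=V_+^O(B_O(V))$. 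So it is enough to arrange this one sign pattern at all but $O(m^{n-1})$ interior vertices.

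\textbf{The triangulation.} The key device is a change of coordinates. Put $N=m/2$ and $\phi\colon\R^n\to\R^n$, $\phi(y)=(y_1,\,y_2-y_1,\,\dots,\,y_n-y_{n-1})$: it is unimodular, preserves first coordinates, and maps $R_N:=\{y\in\R^n:0\le y_1\le y_2\le\cdots\le y_n\le N\}$ onto $\Delta_N^n$. Every facet of $R_N$ lies on one of $y_1=0$, $y_i=y_{i+1}$, $y_n=N$, all of which belong to the hyperplane arrangement $\{y_i=k\}\cup\{y_i-y_j=k\}$ ($k\in\Z$), whose chambers are the simplices of the Freudenthal (Kuhn) triangulation of $\R^n$. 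Consequently that fully periodic triangulation restricts to a triangulation $\kappa_N$ of $R_N$ \emph{with no cell straddling the boundary}; $\kappa_N$ is unimodular, uses every lattice point of $R_N$ as a vertex, and is regular — realized by $\sum_H|\ell_H|$ over the finitely many hyperplanes $H$ of the arrangement meeting $R_N$, $\ell_H$ being the integral affine form defining $H$. I would then take $\tau':=\phi(\kappa_N)$, a regular unimodular maximal triangulation of $\Delta_N^n$, and $\tau:=2\tau'$, a convex $T^2$-triangulation of $\Delta_m^n$; equip $V(\tau)$ with the signs $\mu(w)=(-1)^{w_1/2}$ (vertices of $\tau$ have even first coordinate); let $P_m^n$ be a $T^2$-polynomial produced from $(\tau,\mu)$ by Viro's patchworking theorem; and set
\[
O_m^n:=(-K^n,\,K^{n-1},\,K^{n-2},\,\dots,\,K),
\]
with $K=K(m)$ an integer chosen large enough that $O_m^n$ avoids the finitely many hyperplanes spanned by $(n-1)$-simplices of $\tau$ (so it is generic for $\tau$) and that $K\gg m$.

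\textbf{Checking the sign pattern.} I would verify everything in the $y$-coordinates, where $\tau$ becomes $2\kappa_N$, the vertex $2u$ has sign $(-1)^{u_1}$, and $\phi^{-1}(O_m^n)$ has $j$-th coordinate $\sum_{i\le j}(O_m^n)_i$, a strictly increasing function of $j$. Fix an interior vertex $w=2u$ of $2\kappa_N$, i.e. $0<u_1<u_2<\cdots<u_n<N$. For $K\gg m$ the vector $2u-\phi^{-1}(O_m^n)$ has all coordinates positive, strictly decreasing, and mutually comparable to $K^n$ — uniformly in $u$, since the perturbation $2u$ is bounded by $m$ while the consecutive gaps of $-\phi^{-1}(O_m^n)$ are at least $K\gg m$. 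Hence for small $t>0$ the point $w+t\bigl(2u-\phi^{-1}(O_m^n)\bigr)$ lies in the scaled Kuhn simplex $2\,\mathrm{conv}(u,\,u+e_1,\,u+e_1+e_2,\,\dots,\,u+e_1+\cdots+e_n)$, which is therefore $B_{O_m^n}(w)$. Its $n$ vertices other than $w$ are the $2(u+e_1+\cdots+e_j)$, $1\le j\le n$, each with first coordinate $2(u_1+1)=w_1+2$ and hence of sign $(-1)^{w_1/2+1}=-\mu(w)$. By the criterion above every interior vertex is good, so, using injectivity of $w\mapsto B_{O_m^n}(w)$ and the fact that $\tau$ has $\binom{N-1}{n}\customeq\frac{1}{2^nn!}m^n$ interior vertices, $\bar{\mathcal S}_n^{O_m^n}(\tau(P_m^n))\customgeq\frac{1}{2^nn!}m^n$; together with the automatic upper bound this gives the asserted $\customeq$.

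\textbf{Where the difficulty lies.} Everything hinges on pinning down $B_O(V)$ \emph{simultaneously} for all interior vertices: the direction $\overrightarrow{O_m^nV}$, read through $\phi$, must have one and the same strict ordering of its coordinates for every $V$, which is why $O_m^n$ must be placed at polynomial depth ($K\gg m$) and its coordinates chosen so as to hard-wire the tie-breaking rather than leaving it to genericity alone. The change of coordinates $\phi$ is the enabling trick: it turns $\Delta_N^n$ into the "Kuhn-aligned" polytope $R_N$, so the periodic Freudenthal triangulation restricts to $R_N$ with no boundary correction whatsoever, and the identical local picture recurs at every interior vertex — which is exactly what makes the sign rule $w\mapsto(-1)^{w_1/2}$ work everywhere at once. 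The remaining points (that $B_O$ of a vertex has $O$-index $n$ with the opposite facet $O$-non-visible, that $\kappa_N$ is regular and unimodular, and the Ehrhart count $l^*(\Delta_N^n)=\binom{N-1}{n}$) are standard.
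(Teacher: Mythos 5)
Your proof is correct, and it reaches the bound by the same overall strategy as the paper — an explicit convex unimodular triangulation of $\Delta_{m/2}^n$, an alternating sign rule, and a far-away generic point $O$ arranged so that (asymptotically) every interior lattice point contributes one $n$-simplex of $O$-index $n$ with $2^n$ real critical copies — but the implementation is genuinely different. The paper slices $\Delta_{m/2}^n$ by all hyperplanes parallel to its facets, refines to a convex maximal triangulation, and puts checkerboard signs $(-1)^{\sum v_i}$; the point is that the corner simplex $\mathrm{conv}(v,v+e_1,\dots,v+e_n)$ is automatically a cell of the sliced subdivision (hence of any refinement), so for $O$ chosen deep in the cone $\{v-\sum t_ie_i,\ t_i>0\}$ below all interior points one sees directly that its dilation $\sigma_V$ has $O$-index $n$, $O$-root $\{V\}$, and the required sign pattern — no identification of $B_O(V)$ is needed. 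You instead pass through the unimodular change of coordinates to the Freudenthal triangulation, use stripe signs $(-1)^{w_1/2}$, place $O$ on a moment-like curve $(-K^n,K^{n-1},\dots,K)$ with $K\gg m$ so that the direction $\overrightarrow{OV}$ has a uniform coordinate ordering, and then pin down $B_O(V)$ as the identity-permutation Kuhn simplex at $V$, using $V_+^O(B_O(V))=\{V\}$ and injectivity of $V\mapsto B_O(V)$ to organize the count; you also make the upper bound explicit via Proposition \ref{prop_borne_indice_general} with $k=0$ and $s_0(\tau)\leq l^*(\Delta_{m/2}^n)$, which the paper leaves implicit. Your route costs more bookkeeping (regularity of the Freudenthal triangulation, the careful placement of $O$, the ray argument showing the facet opposite $V$ is the unique $O$-non-visible one), but it buys a completely uniform local picture at every interior vertex and illustrates how the $B_O$-machinery of Section \ref{section_triangulations} can replace the ad hoc verification; the paper's choice of triangulation buys brevity, since the distinguished simplices and their visibility structure are visible at a glance.
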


\begin{proof}
    Let $n$ be a positive integer and let $m$ be an even positive integer. We describe a convex triangulation $\tau$ of $\Delta_m^n$ with signs on its vertices.
    \begin{itemize}
        \item First, subdivide $\Delta_{\frac{m}{2}}^n$ using all the hyperplanes that are parallel to an $(n-1)$-face of $\Delta_{\frac{m}{2}}^n$ and that contain integer points. This subdivision is clearly convex.
        \item Refine the subdivision into a convex triangulation. Such a triangulation is maximal.
        \item Every integer point belonging to $\Delta_{\frac{m}{2}}^n$ receives the sign $+$ if the sum of its coordinates is even and the sign $-$ otherwise.
        \item Dilate the triangulation by $2$ to obtain a triangulation of $\Delta_m^n$.
    \end{itemize}
    Choose an integer point $O_m^n$ that is generic with respect to $\tau$ and that is contained in the intersection of cones $C := \bigcap_{v \in \Z^n \cap \mathring\Delta_m^n}\{ v- \sum_{i=1}^n t_ie_i \; | \; \forall i \in \{1,...,n\}, t_i \in \R_{>0} \}$.
    
    Let $V = (x_0,...,x_n)$ be an integer point with even coordinates lying in the interior of $\Delta_m^n$. The triangulation $\tau$ contains the $n$-simplex $\sigma_V$ with vertices $V$, $V + (2,0,...,0)$, $V + (0,2,0,..,0)$,..., $V + (0,...,0,2)$. The cone with vertex $O_m^n$ over all the $n-1$-faces of $\sigma_V$ that contain $V$ does not intersect the interior of $\sigma_V$, hence the $O_m^n$-index of $\sigma_V$ is $n$ and the $O_m^n$-root of $\sigma_V$ is $V$. Furthermore, the sign of $V$ is opposite to the sign of the other vertices of $\sigma_V$. Hence $\sigma_V$ has $2^n$ real $O_m^n$-critical copies. The result follows.
\end{proof}

\begin{rmk}
    For some fixed positive integers $n$ and $m$, the number of connected components of the real part of the real algebraic variety defined by $P_m^n$ is small: the real part is homeomorphic to $\frac{m}{2}$ spheres of dimension $n-1$.
\end{rmk}

\begin{rmk}
    Similarly, one also has $\Bar{\mathcal{S}}_1^{O_m^n}(\tau(P_m^n)) \customeq \frac{1}{2^n n!}m^n$.
\end{rmk}

\subsection{A better bound in the case of curves}

\begin{sloppypar}
    The sharpness proved in the previous section for extrema does not imply that the bound on the number of connected components from Theorem \ref{thm_partial_sums_betti_numbers} is sharp. In fact, in the case of curves, one can find a better bound, expressed in function of the Euclidean area of the Newton polygon of the considered curves.
\end{sloppypar}

\begin{prop}
    \label{prop_better_bound_curves}
    Let $C$ be a $T^2$-curve with Newton polygon $\Delta \subset (\R_{\geq 0})^2$ and constructed using a triangulation $\tau$. Then the number of connected components of $\R C$ is less than or equal to $\frac{2Area(\Delta)}{3}$.
\end{prop}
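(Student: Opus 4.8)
The plan is to argue combinatorially, directly from the patchworking picture, rather than through the Morse-theoretic bound of Section \ref{section_morse_ineq}, since the desired estimate is exact and not merely an asymptotic $\customleq$. Write $\Delta = 2\Delta'$, where $\Delta' \subset (\R_{\geq 0})^2$ is a lattice polygon and $\tau$ is the dilation by $2$ of a triangulation $\tau'$ of $\Delta'$ with integer vertices. Then $\tau$ and $\tau'$ have the same number $f$ of triangles, and since every non-degenerate lattice triangle has Euclidean area at least $\frac{1}{2}$, summing areas over the triangles of $\tau'$ gives $f \leq 2\,\mathrm{Area}(\Delta') = \frac{1}{2}\mathrm{Area}(\Delta)$. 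The symmetric triangulation $\tau^*$ of $\Delta^*$ consists of the four reflected copies of $\tau$, hence has $4f \leq 2\,\mathrm{Area}(\Delta)$ triangles.

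Next I would pass to the piecewise-linear model. By Viro's theorem in its toric form, $\R C$ is homeomorphic to the PL-curve $\widetilde{\Gamma} \subset \widetilde{\Delta^*}$; since $\widetilde{\Delta^*}$ is a closed surface, $\widetilde{\Gamma}$ is a closed $1$-manifold, that is, a disjoint union of circles. In each triangle of $\tau^*$ the curve $\Gamma$ is either empty (monochromatic vertices) or a single chord joining the midpoints of the two edges with opposite-sign endpoints; the remaining case, all three edges bichromatic, cannot occur for a triangle coloured with two signs. Therefore $\Gamma$, and hence $\widetilde{\Gamma}$, has at most $4f$ segments --- one per triangle --- the antipodal identification on $\partial\Delta^*$ only gluing endpoints of segments together. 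At the midpoint of an interior bichromatic edge exactly two chords meet, which reconfirms that every component is a circle.

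The heart of the argument is the claim that each circle component of $\widetilde{\Gamma}$ contains at least three segments. A circle with $k$ segments is a cyclic chain of chords lying in triangles $T_1,\dots,T_k$ in which consecutive chords meet at the midpoint of a common edge of $T_i$ and $T_{i+1}$ (indices modulo $k$), and the $T_i$ are pairwise distinct because a triangle carries at most one chord. If $k=1$ then $T_1$ would share an edge with itself; if $k=2$ then $T_1$ and $T_2$ would share two distinct edges --- equivalently, the chord of $T_1$ would have both endpoints equal to the midpoint of the unique shared edge --- and both situations are absurd, so $k \geq 3$. Combining everything, $b_0(\R C) = b_0(\widetilde{\Gamma}) \leq \frac{1}{3}\,\#\{\text{segments of }\widetilde{\Gamma}\} \leq \frac{4f}{3} \leq \frac{2\,\mathrm{Area}(\Delta)}{3}$.

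The point that will require the most care is the behaviour along $\partial\Delta^*$: one must rule out that the antipodal gluing turns a single chord into a circle by identifying its two endpoints, which would break the ``$k \geq 3$'' lemma. This is fine because the two endpoints of a chord are midpoints of two distinct edges of a single triangle of $\tau^*$, and such a triangle meets $\partial\Delta^*$ in at most one edge, so at most one of the two endpoints lies on the locus where identifications take place. Everything else is elementary counting.
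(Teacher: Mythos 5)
Your counting scheme --- at most one chord per triangle of $\tau^*$, hence at most $4f$ segments, $f\le \mathrm{Area}(\Delta)/2$ since every triangle of $\tau$ is a $2$-dilate of a lattice triangle, and at least three segments per component --- is essentially the paper's count, but the claim that every circle of $\widetilde{\Gamma}$ contains at least three segments is a genuine gap, and it is exactly the boundary behaviour you postpone to your last paragraph that breaks it. The justification offered there is false: a triangle of $\tau^*$ can have two (or, for the trivial triangulation, all three) of its edges on $\partial\Delta^*$, e.g.\ any corner triangle. More seriously, the lemma itself fails in the stated generality. For a general $\Delta\subset(\R_{\geq 0})^2$ the identification on $\partial\Delta^*$ is not antipodal but facet-wise: the copy $\epsilon F$ of a facet $F$ with primitive integer normal $\nu_F$ is glued to the copy $\epsilon\cdot(-1)^{\nu_F}F$, the coordinate axes playing the same role with normals $(1,0)$ and $(0,1)$. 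Hence $\widetilde{\tau^*}$ need not be a simplicial complex: whenever two edges of one triangle lie on facets whose normals are congruent modulo $2$, two copies of that triangle get glued to each other along two distinct edges, and a genuine two-segment circle can appear. Concretely, take $\Delta=2\,\mathrm{conv}\{(0,0),(2,0),(0,1)\}$, let $\tau$ consist of the single triangle, and assign $-$ to $(0,2)$ and $+$ to $(0,0)$ and $(4,0)$ (the $T^2$-polynomial $1+x^4-y^2$). The $y$-axis has normal $(1,0)$ and the hypotenuse has normal $(1,2)\equiv(1,0)$, so the chords in the copies $(+,+)$ and $(-,+)$ close up into one circle with two segments, and likewise in the lower half-plane: $b_0(\R C)=2$, whereas $\frac{4f}{3}=\frac43$. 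So the inequality $b_0\le\frac13\#\{\mathrm{segments}\}$ on which your chain rests is not merely unproved, it is false; the proposition survives in this example only because the triangle has area $4$, i.e.\ because of slack in $f\le\mathrm{Area}(\Delta)/2$ that your argument never exploits.

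This is precisely where the paper's proof takes a different route: it makes no uniform three-segment claim, but charges triangles of $\tau$ (one orthant only) and treats separately the components not contained in the interior of the positive copy of $\Delta$, using that such a component has at most two symmetric copies and may therefore be charged $1$ per triangle instead of $\frac43$. Your argument is correct whenever the gluing involutions attached to the axes and to the facets of $\Delta$ are pairwise distinct --- in particular for $\Delta=\Delta_m^2$, the case invoked in the remark following the proposition --- but for the statement as given you must either add a case analysis of the paper's kind for components running through identified boundary edges, or show that a triangle supporting such a short component is forced to be large (two edge directions annihilated modulo $2$ by the same nonzero functional have even determinant, so the undilated triangle has area at least $1$, the dilated one at least $4$, and its four copies carry at most two such components), which restores the density $\frac23$ per unit area. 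Be aware that these boundary configurations also strain the paper's own bookkeeping (in the example above the boundary component has two symmetric copies yet meets only one triangle of $\tau$), so this is the one point of the argument where real care is required rather than ``elementary counting.''
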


\begin{proof}
    First, associate to every triangle of $\tau$ the number $0$. Let $B$ be a connected component of the $PL$-curve obtained during the patchworking process, that is homeomorphic to $\R C$ and that intersects the symmetric copy of $\Delta$ that lies in $(\R_{\geq0})^2$.

    If $B$ is contained in the interior of the symmetric copy of $\Delta$ that lies in $(\R_{\geq 0})^2$, then it has $4$ symmetric copies and intersects at least $3$ triangles of $\tau$. To each of these $3$ triangles, associate the number $\frac{4}{3}$ instead of $0$.

    Otherwise, if $B$ is not contained in the interior of the symmetric copy of $\Delta$ that lies in $(\R_{\geq0})^2$, then it has at most $2$ symmetric copies. If $B$ has $2$ symmetric copies, then it intersects at least $2$ triangles of $\tau$. To each of these $2$ triangles, associate the number $1$ instead of $0$. If $B$ has only $1$ symmetric copy, then it intersects at least $1$ triangle of $\tau$. To this triangle, associate the number $1$ instead of $0$.

    Doing this for every connected component $B$ of the $PL$-curve obtained during the patchworking process such that $B$ intersects the symmetric copy of $\Delta$ that lies in $(\R_{\geq 0})^2$, we associate to every triangle $T$ of $\tau$ a non-negative number $n_T$ that is not greater than $\frac{4}{3}$ and such that $\sum_{T \in \tau} n_T = b_0(\R C)$. Clearly, $\sum_{T \in \tau} n_T \leq \frac{4}{3} \#\tau \leq \frac{4}{3} \frac{Area(\Delta)}{2}$.
\end{proof}

\begin{rmk}
    Let $C$ be a $T^2$-curve of degree $d$ in $\P^2$. Proposition \ref{prop_better_bound_curves} gives $b_0(\R C) \leq \frac{d^2}{3}$, while Theorem \ref{thm_partial_sums_betti_numbers} only gives $b_0(\R C) \customleqdeux \frac{d^2}{2}$.
\end{rmk}

\begin{sloppypar}
    It turns out that the bound from Proposition \ref{prop_better_bound_curves} is sharp in certain toric surfaces.
\end{sloppypar}

\begin{prop}
    \label{prop_b0_curves}
    Let $\Pi$ be the polygon with vertices $(0,0)$, $(2,1)$, $(1,2)$.
    For any even positive integer $m$, there exists a $T^2$-curve $C_m$  whose Newton polygon is $m\Delta$ such that $b_0(\R C_m) = \frac{2Area(m\Delta)}{3}$.
\end{prop}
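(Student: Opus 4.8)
The plan is to make the bound of Proposition~\ref{prop_better_bound_curves} an equality by producing, for each even $m$, a convex triangulation $\tau$ of $m\Pi$ realizing a $T^2$-curve whose patchworked $PL$-curve, inside the copy of $m\Pi$ lying in $(\R_{\geq 0})^2$, is a disjoint union of small triangular ovals, one per triangle of the underlying triangulation of $\tfrac m2\Pi$, each contained in the interior of that copy and meeting exactly three triangles of $\tau$. Since $\mathrm{Area}(\Pi) = \tfrac32$, the dilation by $2$ of a \emph{primitive} triangulation of $\tfrac m2\Pi$ has exactly $\tfrac34 m^2 = \tfrac12\,\mathrm{Area}(m\Pi)$ triangles, the maximum possible; if moreover each triangle contributes $\tfrac43$ to the count in the proof of Proposition~\ref{prop_better_bound_curves} --- which happens precisely when it is met by one oval that has $4$ symmetric copies and meets exactly $3$ triangles --- then $b_0(\R C_m) = \tfrac43\cdot\tfrac34 m^2 = m^2 = \tfrac{2\,\mathrm{Area}(m\Pi)}{3}$, which is the asserted value and, by Proposition~\ref{prop_better_bound_curves}, the largest possible.

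First I would triangulate $\tfrac m2\Pi$ (put $k = \tfrac m2$) into $k^2$ unimodular copies of $\Pi$. Writing $P_1 = (2,1)$ and $P_2 = (1,2)$, so that $\Pi = \mathrm{conv}\{0,P_1,P_2\}$ and $(1,1)$ is its unique interior lattice point, I would transport the standard regular triangulation of $\Delta_k^2 = \{(s,t) : s,t\geq 0,\ s+t\leq k\}$ into $k^2$ unit triangles along the linear isomorphism $(s,t)\mapsto sP_1+tP_2$. Each ``upward'' unit triangle is carried to a lattice translate of $\Pi$ and each ``downward'' one to a lattice translate of $-\Pi$ --- both unimodular copies of $\Pi$, each with a single interior lattice point --- and regularity is preserved by a linear change of coordinates, so the resulting triangulation $\tilde\tau_0$ of $\tfrac m2\Pi$ is convex. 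Refining $\tilde\tau_0$ by starring each of its $k^2$ cells at its interior lattice point produces a triangulation $\tilde\tau$ all of whose triangles have Euclidean area $\tfrac12$; hence $\tilde\tau$ is a primitive triangulation of $\tfrac m2\Pi$, and it is still convex because one may star a convex subdivision at the interior points of its cells by pushing the lifting function slightly upward there and rescaling it to be integer-valued. Dilating by $2$ yields the convex triangulation $\tau = 2\tilde\tau$ of $m\Pi$, which realizes the desired curve $C_m$ as a $T^2$-curve with Newton polygon $m\Pi$ through Viro's theorem.

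Then I would set the signs: on $V(\tau) = 2V(\tilde\tau)$, give the sign $-$ to every vertex $2p$ that is the image of an interior lattice point $p$ of a cell of $\tilde\tau_0$, and the sign $+$ to all other vertices (the images of the vertices of $\tilde\tau_0$). The star of such a $2p$ is made of exactly three triangles sharing $2p$; since $2p$ bears $-$ and its three neighbours bear $+$, the patchworked curve inside this star is exactly the triangle joining the midpoints of the three edges through $2p$, and those midpoints are interior lattice points of $m\Pi$, so the oval lies in the interior of the positive copy of $m\Pi$ --- which is contained in $(\R_{>0})^2$ --- and therefore has $4$ symmetric copies. Across an edge shared by two cells of $\tilde\tau_0$ both endpoints bear $+$, and every vertex of $\tau$ on $\partial(m\Pi)$ bears $+$, so no other arc of the curve appears: the $PL$-curve in the positive copy is exactly the disjoint union of these $k^2$ triangular ovals, which are pairwise disjoint and bounded away from the coordinate axes. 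Counting the $4$ symmetric copies of each --- unaffected by the antipodal identification on the boundary, which the ovals avoid --- gives $b_0(\R C_m) = 4k^2 = m^2 = \tfrac{2\,\mathrm{Area}(m\Pi)}{3}$.

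The step that needs the most care is the first one: exhibiting, for \emph{every} $k$, a convex triangulation of $k\Pi$ into unimodular copies of $\Pi$ (not merely for $k$ a power of $2$, where a self-similar subdivision of $2\Pi$ into $4$ copies of $\Pi$ would already do). Once the description as the pushforward of the standard triangulation of $\Delta_k^2$ is found, this is a short verification, but it is the geometric core of the argument; primitivity after starring, convexity of the refinement, and the sign bookkeeping are then routine. This construction is the two-dimensional instance of the one behind Theorem~\ref{prop_large_b0_alldim}, the sharpness being \emph{exact} here because in the curve case Proposition~\ref{prop_better_bound_curves} sharpens the asymptotic bound of Theorem~\ref{thm_partial_sums_betti_numbers}.
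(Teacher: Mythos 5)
Your construction coincides with the paper's proof: transporting the standard triangulation of $\Delta_{m/2}^2$ under $(s,t)\mapsto s(2,1)+t(1,2)$ gives exactly the paper's subdivision of $\frac{m}{2}\Pi$ by the lines $x+y=3k$, $-2x+y=-3k$, $x-2y=-3k$, and the starring refinement at the unique interior lattice point of each cell, the sign distribution, and the count of $4(m/2)^2=m^2=\frac{2\,\mathrm{Area}(m\Pi)}{3}$ ovals are the same. One small correction: with the paper's convention of a convex lifting function whose domains of linearity are the simplices, the star refinement is certified by \emph{lowering} the lifting value at each new interior vertex slightly below the linear interpolation (pushing it upward would destroy convexity); this is a routine fix.
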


\begin{proof}
    For any even positive integer $m$, we describe a triangulation of $m\Pi$ and a distribution of signs at its vertices.
    \begin{itemize}
        \item Take the triangulation of $\frac{m}{2} \Pi$ defined by the union of the lines with equation $x+y = k$, $-2x+y = -k$ and $-2y+x = -k$, for all non-negative integer $k$ divisible by $3$ and smaller than or equal to $3\frac{m}{2}$.
        \item Any vertex of this triangulation receives the sign $+$.
        \item Refine the triangulation into a primitive one in the only possible way.
        \item The vertices of this triangulation to which no sign was assigned receive the sign $-$.
        \item Dilate the triangulation by $2$ to obtain a triangulation of $m\Pi$.
    \end{itemize}
    Let $V$ be a vertex of the triangulation bearing the sign $-$. It cannot lie on the boundary of $\Pi$. The star of $V$ consists of exactly three triangles. The vertices of these triangles that are different from $V$ bear the sign $+$. Hence, there are four connected components of the $PL$-curve obtained by using combinatorial patchworking on the described datum that are contained in the symmetric copies of the star of $V$. Furthermore, every triangle in the triangulation has one of its vertices that bear the sign $-$. The result follows.
\end{proof}

\begin{rmk}
    Using a variation of this construction, one can prove that the bound of Proposition \ref{prop_better_bound_curves} is asymptotically sharp in $\P^2$.
\end{rmk}

\begin{figure}[H]
    \centering
    \includegraphics[scale=0.5]{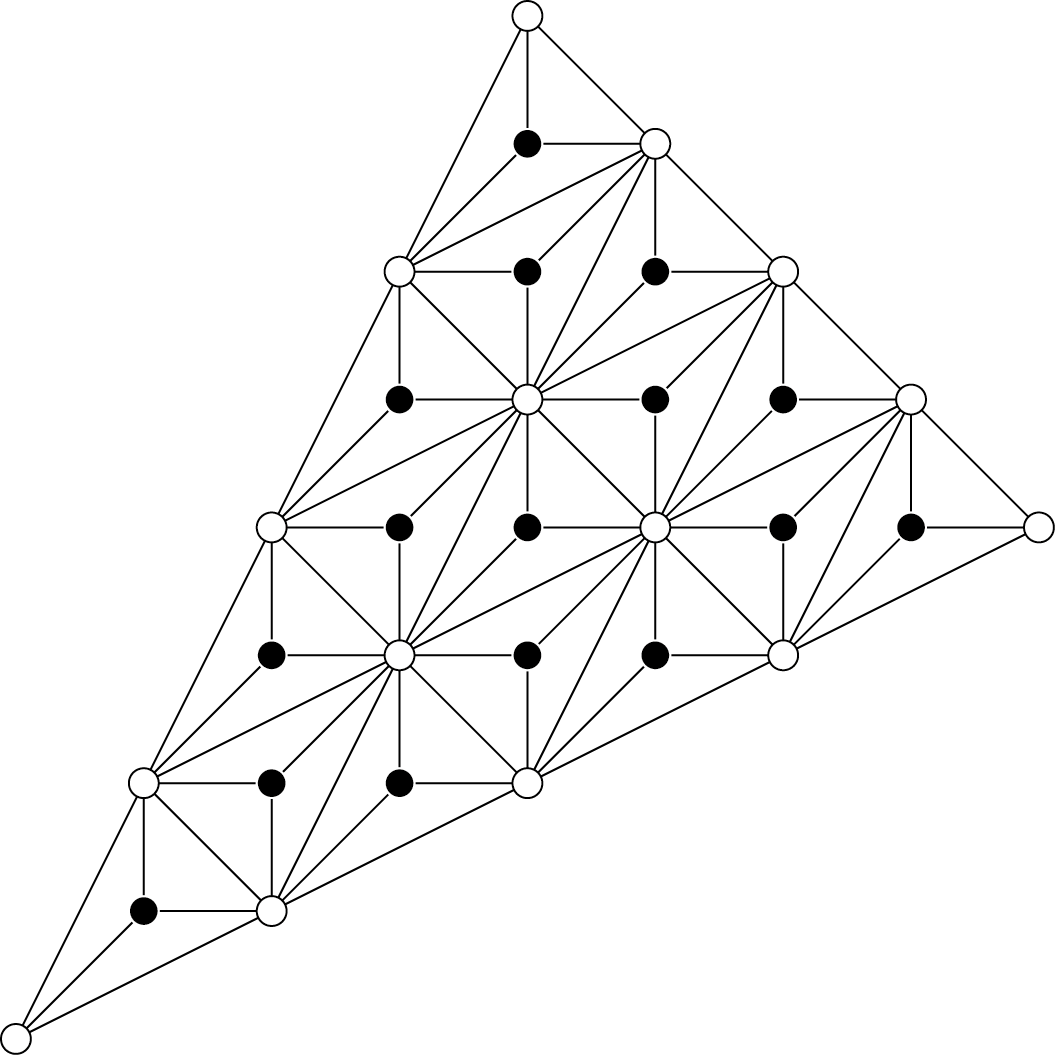}
    \caption{The described triangulation and sign distribution for $\frac{m}{2}=4$. The color of the vertices represent their sign.}
    \label{figure-harnackset}
\end{figure}

\begin{rmk}
    It is interesting to notice that triangulations similar to the one we used to prove Proposition \ref{prop_b0_curves} appear in different contexts, for example in constructions of real algebraic curves with large finite number of real points (see \cite{finite_number_real_points}) or in constructions of polynomials in two variables with prescribed numbers of real critical points of given index (see \cite{shustin_early_critpts}).
\end{rmk}

\subsection{Number of connected components}

    We generalize the construction from the previous section to all dimensions.

\begin{thm}
    \label{prop_large_b0_alldim}
    Let $n$ be a positive integer.
    Let $\Pi^n \subset \R^n$ be the polytope with vertices $(0,...,0)$, $(2,1,...,1)$, $(1,2,1,...,1)$, ..., $(1,...,1,2)$. There exists a family $(A_m^n)_{m \in 2\N}$ of $T^2$-hypersurfaces such that $b_0(\R A_m^n) \customeq \frac{n}{n+1}h^{n-1,0}(\C A_m^n)$ and such that for any even positive integer $m$, the hypersurface $A_m$ has Newton polytope $m\Pi^n$.
\end{thm}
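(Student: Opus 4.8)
The plan is to generalize the two-dimensional construction from Proposition \ref{prop_b0_curves}, where the key feature was a triangulation of $\frac{m}{2}\Pi$ whose primitive refinement has the property that every top-dimensional simplex contains exactly one "new" vertex, and those new vertices are exactly the ones whose star realizes a connected component of $\R C$ lying in the open positive orthant. I would first construct, for each even $m$, a convex triangulation of $\frac{m}{2}\Pi^n$ as follows: slice $\frac{m}{2}\Pi^n$ by the family of hyperplanes parallel to its facets (the facet through the $n$ "big" vertices $(2,1,\dots,1),\dots,(1,\dots,1,2)$ and the $n$ facets through the origin) passing through lattice points, taking only every $(n+1)$-th such hyperplane in a suitably normalized coordinate so that the cells are lattice translates of a fixed small copy of $\Pi^n$; then refine into a primitive triangulation in the (essentially forced) way. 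Assign the sign $+$ to every vertex of the coarse triangulation, and the sign $-$ to every vertex introduced in the primitive refinement. Finally dilate everything by $2$ to obtain a triangulation $\tau$ of $m\Pi^n$ with signs, defining a $T^2$-hypersurface $A_m^n$.

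Next I would verify the combinatorial heart of the argument: for each vertex $V$ of the primitive triangulation of $\frac{m}{2}\Pi^n$ bearing the sign $-$, the star of $V$ in the dilated-by-$2$ triangulation consists of $n$-simplices all of whose vertices other than $2V$ bear the sign $+$, and $2V$ lies in the interior of $m\Pi^n$. Consequently, for each such $V$, the $2^n$ symmetric copies of $\mathrm{St}(2V)$ each contribute a connected component of the PL-hypersurface $\widetilde\Gamma$ homeomorphic to a small sphere $S^{n-1}$ around the reflected copies of $2V$; distinct $V$'s give disjoint components. This yields $b_0(\R A_m^n) \customgeq 2^n \cdot \#\{\text{sign-}(-)\text{ vertices}\}$. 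By the construction, the number of minus-vertices is asymptotically $\frac{n}{n+1}$ times the total number of lattice points in the interior of $\frac{m}{2}\Pi^n$ (each coarse cell has one "old" vertex and, after primitive refinement, the old vertices account for a $\frac{1}{n+1}$ fraction of all vertices — this is exactly the ratio that appeared in dimension $2$, where a triangle in the coarse subdivision, primitively refined, has its vertices distributed so that the minus-vertices form $\frac{2}{3}$ of them). Combined with $l^*(\tfrac{m}{2}\Pi^n)\customeq \tfrac{1}{2^n}l^*(m\Pi^n)\customeq \tfrac{1}{2^n}h^{n-1,0}(\C A_m^n)$ (Danilov–Khovanskii, as used in the proof of Theorem \ref{thm_partial_sums_betti_numbers}), this gives $b_0(\R A_m^n)\customgeq \frac{n}{n+1}h^{n-1,0}(\C A_m^n)$.

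For the matching upper bound $b_0(\R A_m^n)\customleq \frac{n}{n+1}h^{n-1,0}(\C A_m^n)$, I would argue that every $n$-simplex of $\tau$ contains (after dividing exponents by $2$) exactly one minus-vertex, so a counting/charging argument in the spirit of Proposition \ref{prop_better_bound_curves} applies: assign to each top simplex of the coarse triangulation of $\frac{m}{2}\Pi^n$ the number $0$, and for each connected component $B$ of $\widetilde\Gamma$ meeting the positive-orthant copy of $m\Pi^n$, distribute its "weight" $2^n$ among the $\geq n+1$ coarse cells it forces (a component in the open orthant has $2^n$ copies and meets at least $n+1$ of the reflected stars, charging $\frac{2^n}{n+1}$ to each coarse cell). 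Since the number of coarse cells is $\customeq (n+1)\,l^*(\frac{m}{2}\Pi^n)$ times a constant depending only on the normalization — more precisely the total weight is at most $\frac{2^n}{n+1}$ times the number of coarse cells, which is $\customeq \frac{2^n}{n+1}\cdot (n+1)\cdot \frac{1}{n+1}\cdot(\text{something})$ — one recovers the same asymptotic constant $\frac{n}{n+1}$. Here the boundary contributions (components meeting $\partial(m\Pi^n)$, simplices lying in facets) are $O(m^{n-1})$ and absorbed by the $\customeq$ notation.

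The main obstacle I expect is making the coarse slicing of $\Pi^n$ precise and checking that its primitive refinement is genuinely "forced" (or at least can be chosen) so that: (i) the coarse cells are all lattice-equivalent to a fixed copy of $\Pi^n$, (ii) the fraction of refinement-vertices is exactly $\frac{n}{n+1}$ asymptotically, and (iii) the star of each minus-vertex in the dilation has the clean sign pattern needed. In dimension $2$ this worked because the three families of lines cut $\frac{m}{2}\Pi$ into small copies of $\Pi$ and the primitive refinement of such a copy is unique; in higher dimension one must exhibit an explicit convex primitive refinement of the small copy of $\Pi^n$ all of whose interior vertices, after dilation by $2$, have stars contained in the interior with the monochromatic-link property, and confirm the vertex-count ratio. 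Once this combinatorial model is pinned down, everything else is bookkeeping with the $\customeq$ calculus and a direct application of Danilov–Khovanskii.
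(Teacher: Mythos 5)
Your overall strategy is the same as the paper's: a coarse convex subdivision of $\frac{m}{2}\Pi^n$ with all of its vertices marked $+$, extra interior lattice points marked $-$, dilation by $2$, one spherical component of the patchworked hypersurface in each symmetric copy of the star of each minus vertex, and Danilov--Khovanskii to convert interior lattice point counts into $h^{n-1,0}$. However, the combinatorial model you rely on, and which you yourself flag as the ``main obstacle'', is not merely unverified: it is impossible for $n\geq 3$. The coarse cells cannot all be lattice (or unimodular) copies of $\Pi^n$, since $\mathrm{vol}(\Pi^n)=\frac{n+1}{n!}$ and every such copy contains a lattice point in its interior; a tiling of $\frac{m}{2}\Pi^n$ by such cells would therefore produce at least $\frac{n!}{n+1}\,\mathrm{vol}\bigl(\frac{m}{2}\Pi^n\bigr)$ pairwise distinct interior lattice points, which exceeds the total number of lattice points ($\sim \mathrm{vol}(\frac{m}{2}\Pi^n)$) as soon as $n\geq 3$. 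Equivalently, the $n+1$ families of hyperplanes parallel to the facets do not cut $\R^n$ into simplices once $n\geq 3$. The later steps inherit this gap: the claim that the coarse vertices form a $\frac{1}{n+1}$ fraction of all lattice points is argued only by analogy with $n=2$, and passing to a full primitive (maximal) refinement with \emph{all} new vertices marked $-$ would in general destroy exactly the property you need --- that the link of every minus vertex consists only of plus vertices --- since new vertices of a maximal refinement are typically joined by edges; it would also change the count. The closing ``charging'' paragraph for the upper bound is too vague to check (in particular the assertion that a component in the open orthant meets at least $n+1$ coarse cells is unjustified).

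For comparison, the paper resolves the obstacle differently: it takes the explicit arrangement of hyperplanes $\sum_i x_i=(n+1)k$ and $-nx_j+\sum_{i\neq j}x_i=-(n+1)k$, accepts that the resulting cells are not simplices, and proves (Lemma \ref{lemma_points_in_cells}, by an explicit computation with the matrix $I_n+J_n$ of determinant $n+1$) that every interior lattice point whose coordinate sum is not divisible by $n+1$ lies in the interior of exactly one cell, and that each cell interior contains at most one lattice point. It then refines only by coning from these interior points and triangulating the remaining cells without introducing new lattice vertices --- it never takes a maximal or primitive refinement. This is what simultaneously guarantees that every minus vertex has an all-plus link and that the number of minus vertices is $\customeq\frac{n}{n+1}\,l^*\bigl(\frac{m}{2}\Pi^n\bigr)$, the excluded $\frac{1}{n+1}$ fraction being the lattice points with coordinate sum divisible by $n+1$ (not the set of coarse vertices, as in your bookkeeping). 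Without an argument of this kind --- some explicit convex subdivision with the one-interior-lattice-point-per-cell property and the resulting ratio --- your proof does not go through.
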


    Let $n$ be a positive integer and let $m$ be an even positive integer. We denote by $x_1,...,x_n$ the standard coordinates in $\R^n$. Consider the subdivision of $\frac{m}{2}\Pi^n$ obtained by using the hyperplanes with equations $\sum_{i=1}^n x_i = (n+1)k$ and $-nx_j \sum_{i\in \{1,...,n \}\setminus \{ j \}} x_i = -(n+1)k$ for every integer $j \in \{1,...,n \}$ and for every non-negative integer $k$. The obtained subdivision is convex and is denoted by $\overline{\tau_m^n}$.

\begin{lem}
    \label{lemma_points_in_cells}
    Every integer point lying in the interior of $\frac{m}{2}\Pi^n$ but not on a hyperplane with equation $\sum_{i=1}^n x_i = (n+1)k$ for a any integer $k$ lies in the interior of an $n$-dimensional cell of $\overline{\tau_m^n}$. Furthermore, the interior of every $n$-dimensional cell of $\overline{\tau_m^n}$ contains at most one integer point. 
\end{lem}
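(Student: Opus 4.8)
\textbf{Proof proposal for Lemma \ref{lemma_points_in_cells}.}

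The plan is to understand the $n$-dimensional cells of $\overline{\tau_m^n}$ explicitly and then count lattice points in each cell. First I would observe that the subdivision is defined by the family of hyperplanes $\sum_i x_i = (n+1)k$ together with the families $H_j^k : -nx_j + \sum_{i \neq j} x_i = -(n+1)k$ for $j = 1,\dots,n$. A convenient change of coordinates is to set $u_0 = \sum_{i=1}^n x_i$ and, for $j = 1,\dots,n$, $u_j = -nx_j + \sum_{i \neq j} x_i = u_0 - (n+1)x_j$. Then $\sum_{j=1}^n u_j = n u_0 - (n+1)u_0 = -u_0$, so these $n+1$ linear forms satisfy $u_0 + u_1 + \dots + u_n = 0$; they behave like the barycentric-type coordinates on the simplex. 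The cutting hyperplanes are exactly the level sets $\{u_0 \in (n+1)\Z\}$ and $\{u_j \in -(n+1)\Z\}$, i.e. all the $u_i$ run through the same arithmetic progression of step $n+1$ (up to sign). Hence every maximal cell of $\overline{\tau_m^n}$ is, in the $u$-coordinates, a translate of the region $\{0 \le u_0 \le n+1,\ -(n+1) \le u_j \le 0 \text{ for all } j\}$ intersected with the hyperplane $\sum u_i = 0$ — which is the dilation by $(n+1)$ of the standard simplex $\{u_0 \ge 0,\ u_j \le 0,\ \sum u_i = 0\}$ — with the boundary cells being the pieces of this that survive inside $\frac m2 \Pi^n$ (the simplex $\Pi^n$ itself is $\{x_i \ge 0\text{-type conditions}\}$, which I would check translates to $u_0 \ge 0$, $u_j \le 0$, so $\Pi^n = \frac{1}{n+1}\cdot(\text{that simplex})$ and $\overline{\tau_m^n}$ is literally the subdivision of $\frac m2\Pi^n$ into $(n!)\cdot$(volume) copies of a fixed simplex of normalized volume ... ).

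With the cell structure identified, the two assertions become lattice-point statements about a single model cell $\Sigma := (n+1)\Delta$, where $\Delta$ is the lattice simplex with vertices $0$ and $-e_1,\dots,-e_n$ in the sublattice $\{u : \sum u_i = 0\}$ — more precisely $\Sigma$ is the image under the $u$-map of a maximal cell, and I would check that the $u$-map is a lattice isomorphism from $\Z^n$ onto the lattice $L = \{u \in \Z^{n+1} : \sum u_i = 0,\ u_0 \equiv u_j \pmod{n+1}\}$ (the congruence conditions come from $u_j = u_0 - (n+1)x_j$). Then: (i) an integer point of $\frac m2\Pi^n$ not on any hyperplane $\sum x_i = (n+1)k$ has $u_0 \notin (n+1)\Z$; I must show such a point lies in the \emph{interior} of its cell, i.e. that it lies on none of the hyperplanes $H_j^k$ either — equivalently $u_j \notin (n+1)\Z$ for all $j$. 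But $u_j = u_0 - (n+1)x_j \equiv u_0 \pmod{n+1}$, so $u_j \in (n+1)\Z \iff u_0 \in (n+1)\Z$; since $u_0 \notin (n+1)\Z$ we get $u_j \notin (n+1)\Z$ for every $j$, hence the point is in the open cell. (ii) For the "at most one" claim, suppose $p, q$ are integer points in the interior of the same maximal cell; writing them in $u$-coordinates as $u_0 = (n+1)a + r$ with $r \in \{1,\dots,n\}$ the same residue for both (it is determined by the cell), and $u_j = (n+1)b_j + r$ likewise, the constraint $\sum_{i=0}^n u_i = 0$ forces $r + \sum_{j}(r) + (n+1)(a + \sum b_j) = 0$, i.e. $(n+1)r \equiv 0 \pmod{n+1}$ automatically, but more usefully the cell being a translate of $(n+1)\Delta$ means its interior lattice points are in bijection with interior lattice points of $(n+1)\Delta$ in the lattice $L$; a direct computation (or Ehrhart-type count) shows $(n+1)\Delta$ has exactly one interior lattice point of $L$ in the relevant residue class — namely the "center" $(r, r, \dots, r)$ shifted appropriately. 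I would do this last count by hand: interior points of $(n+1)\Delta = \{u_0 > 0, u_j < 0, \sum u_i = 0\}$ scaled correctly, intersected with $L$, amount to solving $0 < u_0 < n+1$, $-(n+1) < u_j < 0$, $u_j \equiv u_0$, $\sum u_i = 0$, which pins down $u_0 = r$, $u_j = r - (n+1)$ uniquely for each $r \in \{1,\dots,n\}$ — one point per cell.

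The main obstacle I anticipate is purely bookkeeping: verifying that the $u$-coordinate map is genuinely a bijection of lattices onto $L$ with the right congruence description, and then carefully matching "$n$-dimensional cell of $\overline{\tau_m^n}$ lying inside $\frac m2\Pi^n$" with "translate of the model simplex $(n+1)\Delta$" including the boundary cells (which are truncated, but truncation only removes lattice points, so it cannot manufacture a second interior point — this makes the boundary cells harmless for both statements). Once the dictionary between $x$- and $u$-coordinates is set up, both claims reduce to the one-line congruence observation $u_j \equiv u_0 \pmod{n+1}$ together with the elementary enumeration of interior lattice points of $(n+1)\Delta$, so I would present the coordinate change carefully and then dispatch both bullet points quickly.
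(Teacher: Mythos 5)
Your part (i) is correct and, in fact, cleaner than the paper's treatment: after setting $u_0=\sum_{i=1}^n x_i$ and $u_j=u_0-(n+1)x_j$, the observation that $u_j\equiv u_0 \pmod{n+1}$ at every integer point shows at once that a lattice point of the interior of $\frac m2\Pi^n$ avoiding the hyperplanes $\sum_i x_i=(n+1)k$ avoids all the other cutting hyperplanes as well, hence lies in the interior of a maximal cell. The paper reaches the same conclusion by explicitly solving an integral linear system to name the cell containing the point; your congruence is the same arithmetic in different clothing.

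The gap is in part (ii). Your structural claim that every maximal cell of $\overline{\tau_m^n}$ is a translate of the single model simplex $(n+1)\Delta$ is false. In the $u$-coordinates a maximal cell is the intersection of the hyperplane $\sum_{i=0}^n u_i=0$ with a translated cube $\prod_i[(n+1)c_i,(n+1)(c_i+1)]$, and the level at which the hyperplane slices the cube depends on $\sum_i c_i$; the resulting cells are hypersimplices of several combinatorial types, not mutually translated and not all simplices (already for $n=3$ some cells are octahedra, which is why the paper must cone over interior points and, for $n>3$, triangulate further to obtain $\tau_m^n$). Accordingly, the translation you invoke to reduce to the model does not preserve the constraint $\sum_i u_i=0$, and your final enumeration in the model window is internally inconsistent: with $0<u_0<n+1$, $-(n+1)<u_j<0$ and $u_j\equiv u_0\pmod{n+1}$, the condition $\sum_i u_i=0$ forces the common residue to be $r=n$, so that window contains exactly one admissible point, not one for each $r\in\{1,\dots,n\}$. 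The statement you want does follow from your own congruence machinery, but you must run it inside each cell's defining window rather than translating: if $(n+1)a<u_0<(n+1)(a+1)$ and $(n+1)b_j<u_j<(n+1)(b_j+1)$ for all $j$, then $u_0=(n+1)a+r$ and $u_j=(n+1)b_j+r$ for one common $r\in\{1,\dots,n\}$, and $\sum_{i=0}^n u_i=0$ gives $r=-(a+\sum_j b_j)$, so the cell determines $r$ and hence the point uniquely. With that replacement your argument is sound and is essentially the paper's computation (which solves the corresponding integral system in the $x$-coordinates and treats uniqueness as immediate).
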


\begin{proof}
    Let $p = (p_1,..., p_n)$ be an integer point lying in the interior of $\frac{m}{2}\Pi^n$ but not on a hyperplane with equation $\sum_{i=1}^n x_i = (n+1)k$ for any integer $k$. Let $j$ be the integer in $\{1,...,n \}$ such that there exists a non-negative integer $k_0$ such that the point $p' := p - j(1,...,1)$ is on the hyperplane with equation $\sum_{i=1}^n x_i = (n+1)k_0$. Let $(k_1,...,k_n)$ be the unique solution of the following system of equations.
    \begin{equation}
    \begin{cases}
      2x_1 + \sum_{i \in \{2,...,n \}}x_i =  p_1 - j\\
      x_1 + 2x_2 + \sum_{i \in \{3,...,n \}}x_i = p_2 - j\\
      ...\\
      \sum_{i \in \{1,...,n-1\}}x_i + 2x_n = p_n-j
    \end{cases}
    \end{equation}
    The numbers $k_1,...,k_n$ are integers: the determinant of the matrix associated to the system is $n+1$ and the inverse of the matrix is $I_n - \frac{1}{n+1}J_n$, where $J_n$ is the matrix whose entries are all equal to $1$.
    Notice that, summing all the relations and dividing by $n+1$, we have $\sum_{i = 1}^n k_i = k_0$. Writing $p$ as $(2k_1 + k_2+...+k_n + j, k_1 + 2k_2 + k_3 + ... + k_n + j, ..., k_1 + ... + k_{n-1} + 2k_n + j)$, one can check that it belongs to the interior of the $n$-dimensional cell of $\overline{\tau_m^n}$ defined by the inequalities
    \begin{equation}
    \begin{cases}
        (n+1)(k_0 + j -1) < \sum_{i = 1}^n x_i < (n+1)(k_0 + j)\\
        -(n+1)(k_1-1) < -nx_1 + \sum_{i=2}^n x_i < -(n+1)k_1\\
        ...\\
        -(n+1)(k_n-1) < -nx_n + \sum_{i=1}^{n-1}n x_i < -(n+1)k_n
    \end{cases} \, .
    \end{equation}
    Indeed, the first line becomes $(n+1)(j-1) < nj < (n+1)j$ after substracting $(n+1)k_0 = \sum_{i=1}^n (n+1)k_i$, while the others all give $-(n+1) < -j < 0$.

    The second part of the lemma is then straightforward.
\end{proof}

    Now, we can prove Theorem \ref{prop_large_b0_alldim}.

\begin{proof}
    Let $n$ be a positive integer and let $m$ be an even positive integer. Assign the sign $+$ to every vertex of the subdivision $\overline{\tau_m^n}$. Refine $\overline{\tau_m^n}$ by taking all the cones with vertex an integer point $p$ lying in the interior of an $n$-dimensional cell $C_p$ of $\overline{\tau_m^n}$ and with basis a $(n-1)$-face of $C_p$. The obtained subdivision is not a triangulation if $n>3$. In that case, refine it into a convex triangulation by triangulating the cells that are not simplices. 
    We denote by $\tau_m^n$ the triangulation obtained this way. The vertices of $\tau_m^n$ that do not yet bear a sign are assigned the sign $-$. Now, we dilate $\tau_m^n$ by $2$ to obtain a triangulation ${\tau'}_m^{n}$ of $m\Pi^n$ and we denote by $A_m^n$ the $T^2$-hypersurface obtained by using combinatorial patchworking with this signed triangulation of $m\Pi^n$. Notice that the way we obtained $\tau_m^n$ from $\overline{\tau_m^n}$ ensures that the link of a vertex of ${\tau'}_m^{n}$ that bears the sign $-$ consists only of points bearing the sign $+$. Hence, in the star of each of the $2^n$ copies of every vertex of ${\tau'}_m^{n}$ that bears the sign $-$, there is a connected component homeomorphic to a sphere of the $PL$-hypersurface obtained during the patchworking process. It follows from Lemma \ref{lemma_points_in_cells} that the number of vertices of $\tau_m^n$ bearing the sign $-$ is asymptotically equivalent to $\frac{n}{n+1}l^*(\Pi_{\frac{m}{2}}^n)$. The result follows.
\end{proof}

\begin{rmk}
    The construction is most easily described for the toric variety associated to $\Pi^n$, but can be adapted to other ambient spaces, such as $\P^n$, to obtain families that satisfy the same asymptotical property.
\end{rmk}

\subsection{First Betti number}

Finally, we present a construction of a family of $T^2$-surfaces in $(\P^1)^3$. Once again, the ambient space is chosen so that the description of the construction and the computation of the invariants of the resulting surfaces are as easy as possible. However, the construction can be adapted to other ambient spaces, for example $\P^3$.

\begin{prop}
    \label{prop_big_b1}
    There exists a family $(B_m^3)_{m \in 2\N}$ of $T^2$-surfaces in $(\P^1)^3$ such that for any $m \in 2 \N$, the surface $B_m^3$ is of multidegree $(m,m,m)$ and
    \begin{equation*}
        \chi(\R A_m^3) = -\frac{18}{8} m^3 \customeqtrois -\frac{1}{2}h^{1,1}(\C A_m^3) - \frac{1}{4}m^3
    \customeqtrois \sigma(\C A_m^3) - \frac{1}{4}m^3.    
    \end{equation*}
    In particular, this means that $b_1(\R A_m^3) \customgeqtrois \frac{1}{2}h^{1,1}(\C A_m^3) + \frac{1}{4}m^3$.
\end{prop}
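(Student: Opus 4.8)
The plan is to construct an explicit signed triangulation of $\frac{m}{2}$ times the unit cube $[0,1]^3$ (the Newton polytope of multidegree-$(\frac{m}{2},\frac{m}{2},\frac{m}{2})$ hypersurfaces in $(\P^1)^3$), dilate it by $2$, and count the real critical points of the associated $T^2$-polynomial using Theorem \ref{thmitenbergshustin}, together with the Euler characteristic formula for $T$-hypersurfaces. The idea is to adapt the ``bad vertex in every star'' philosophy: I want many vertices of the sub-triangulation (before dilation) whose entire link bears one sign, so that each of the $2^3 = 8$ symmetric copies of such a vertex contributes a small isolated sphere to $\R B_m^3$, but this time I need to control not just $b_0$ but the whole Euler characteristic, and in particular force $\chi(\R B_m^3)$ to be very negative, which means I want a large $b_1$ contribution rather than a large $b_0$ contribution.

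Concretely, first I would subdivide $\frac{m}{2}[0,1]^3$ into unit cubes, then choose a triangulation of each unit cube and a sign assignment that produces, inside each unit cube (or each small block of cubes), a prescribed local topological picture whose contribution to $\chi$ of the patchworked piece is a fixed negative constant times the number of cubes. A natural device is to put a ``$-$'' sign at the center of a carefully chosen pattern so that the vanishing locus inside a cube is a handlebody-like surface piece; summing over $\asymp (\frac{m}{2})^3$ cubes gives $\chi(\R B_m^3) \customeq c \, m^3$ for the appropriate negative constant $c$, and the stated value $-\frac{18}{8} m^3$ pins down exactly which local pattern is needed. In parallel I would compute $h^{1,1}(\C B_m^3)$ and $\sigma(\C B_m^3)$ for a generic surface of multidegree $(\frac{m}{2},\frac{m}{2},\frac{m}{2})$ in $(\P^1)^3$ — these are classical (Hodge numbers of hypersurfaces in a product of projective lines, computable via the adjunction/Danilov--Khovanskii formulas already invoked for Theorem \ref{thm_partial_sums_betti_numbers}) and yield $h^{1,1} \customeq \frac{17}{4}m^3$ and $\sigma \customeq -2 m^3$ up to lower order, so that $-\frac12 h^{1,1} - \frac14 m^3 \customeq -\frac{18}{8}m^3 \customeq \sigma - \frac14 m^3$ as claimed. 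The bound on $b_1$ then follows formally: since $\chi(\R B_m^3) = b_0 - b_1 + b_2$ and $b_0, b_2 \geq 0$, one gets $b_1 \geq b_0 + b_2 - \chi \geq -\chi \customgeq \frac12 h^{1,1} + \frac14 m^3$; more care (using that the construction makes $b_0$ itself of lower order, or using the Smith-type/sibling estimates from Section \ref{section_sibling_hypersurf}) makes this precise.

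The order of steps: (1) describe the cube subdivision and the local triangulation-plus-sign pattern in one fundamental block; (2) verify the pattern is realizable by a convex triangulation after dilation by $2$, i.e.\ that it is genuinely a $T^2$-datum; (3) compute the local contribution to $\chi$ of the $PL$-hypersurface and sum to get $\chi(\R B_m^3) \customeq -\frac{18}{8}m^3$; (4) compute $h^{1,1}$ and $\sigma$ of the complexification; (5) assemble the displayed chain of asymptotic equalities and deduce the $b_1$ lower bound. I expect step (1)--(3) to be the main obstacle: finding a local pattern whose $\chi$-contribution is exactly the constant needed and which simultaneously fits together globally into a \emph{convex} triangulation (the convexity is required for Viro's theorem) is delicate, since ad hoc local choices rarely glue to a globally convex subdivision — one typically needs a single clean global recipe, like the lines/hyperplanes recipe used in the proof of Theorem \ref{prop_large_b0_alldim} and Proposition \ref{prop_b0_curves}, rather than a cube-by-cube construction. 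Verifying the Euler characteristic count rigorously (rather than just heuristically per cube) will require either a direct combinatorial count of the simplices of each $O$-index via Theorem \ref{thmitenbergshustin}, or an appeal to the general formula $\chi(\R A) = \sum (-1)^i c_i$ for Morse-type data, and reconciling the boundary contributions as asymptotically negligible.
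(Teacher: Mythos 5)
Your overall strategy is the one the paper follows (a local signed block repeated over a product polytope, Euler characteristic summed blockwise, compared with $h^{1,1}$ and $\sigma$ of the complexification, and then $b_1 \geq -\chi$), but as written the proposal has a genuine gap: it never produces the construction, which is the entire content of the statement. You say the value $-\frac{18}{8}m^3$ ``pins down exactly which local pattern is needed'' and you yourself flag finding that pattern, and gluing it into a convex $T^2$-datum, as the main obstacles -- but you do not resolve either. The paper does this concretely: it triangulates the cube $[0,2]^3$ into six explicit tetrahedra, puts the sign $-$ at $(2,0,0)$, $(0,2,0)$, $(2,2,2)$ and $+$ elsewhere, and computes $\chi = -18$ for the resulting multidegree $(2,2,2)$ $T^2$-surface by counting mixed-sign simplices weighted by their number of symmetric copies ($8$, $4$ or $2$ according to whether they meet the boundary); it then propagates this block along each coordinate direction by alternating translated and mirror-reflected copies, so that the signed triangulations agree on shared faces and the real parts are cut and reglued along closed curves, whence $\chi = -18k$, $-18k^2$, $-18k^3$ for $\Pi_{k,1,1}$, $\Pi_{k,k,1}$, $\Pi_{k,k,k}$ with $m = 2k$. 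Without an explicit block and an explicit gluing mechanism of this kind, your steps (1)--(3) are a restatement of the problem rather than a proof.

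There is also a numerical error in step (4): for a smooth surface of multidegree $(m,m,m)$ in $(\P^1)^3$ one has $h^{1,1}(\C A_m^3) \customeqtrois 4m^3$ and $\sigma(\C A_m^3) \customeqtrois -2m^3$ (e.g.\ from $\chi_{top} \customeqtrois 6m^3$, $h^{2,0} \customeqtrois m^3$, $K^2 \customeqtrois 6m^3$), not $h^{1,1} \customeqtrois \frac{17}{4}m^3$; with your value the displayed chain would give $-\frac{19}{8}m^3$ instead of $-\frac{18}{8}m^3$, so the asymptotic equalities you assert would fail. The final deduction $b_1 \geq b_0 + b_2 - \chi \geq -\chi \customgeqtrois \frac12 h^{1,1} + \frac14 m^3$ is fine and agrees with the paper's ``in particular'' step, but it only becomes available once the construction and the correct Hodge-theoretic asymptotics are in place.
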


\begin{lem}
    \label{lem_big_b1}
    There exists a $T^2$-surface $B$ of multidegree $(2,2,2)$ in $(\P^1)^3$ such that $\chi(\R B) = -18$.
\end{lem}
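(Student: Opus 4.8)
The plan is to exhibit an explicit convex triangulation $\tau$ of the cube $2\Pi$, where $\Pi = [0,1]^3$ is the unit cube (so that $2\Pi = [0,2]^3$ is the Newton polytope of a surface of multidegree $(2,2,2)$ in $(\P^1)^3$), together with a sign distribution on $V(\tau)$, and then to compute directly the Euler characteristic of the resulting $PL$-surface $\widetilde{\Gamma}$, which by Viro's theorem is homeomorphic to $\R B$. To guarantee that $B$ is a $T^2$-surface, the triangulation $\tau$ must be the dilation by $2$ of a triangulation of $\Pi = [0,1]^3$; since $[0,1]^3$ contains only its $8$ vertices as lattice points, the only freedom is in how one triangulates the cube into tetrahedra. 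I would take the standard decomposition of $[0,1]^3$ into $6$ simplices (or the $5$-simplex Freudenthal-type decomposition), pick a sign distribution on the $8$ vertices, then dilate by $2$; the signs on the interior and boundary lattice points of $2\Pi$ are then forced by the $T^2$-construction (all exponents doubled, i.e.\ the hypersurface is an $SQM$-hypersurface symmetric in all coordinate hyperplanes).

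The key computational step is to evaluate $\chi(\R B)$. Here I would use one of two equivalent routes. The first is the combinatorial route: count the vertices, edges, and $2$-faces of the $PL$-surface $\widetilde{\Gamma}$ inside $\widetilde{(2\Pi)^*}$ directly from the triangulation $\tau^*$ of $(2\Pi)^*$ and the sign data, using the rule that in each tetrahedron with vertices of both signs, $\Gamma$ is the convex hull of the midpoints of the sign-changing edges. The second, and probably cleaner, route is to use the Morse-theoretic description of Section \ref{section_real_crit_pts}: choose a point $O$ generic with respect to $\tau(P)$ and express $\chi(\R B)$ via the real $O$-critical symmetric copies of simplices, or simply invoke the sibling-hypersurface formalism of Section \ref{section_sibling_hypersurf}, noting that $\chi(\R B)$ for the doubled surface relates to the sibling sum $\sum_{S \in \mathcal{F}_A}\chi(\R S)$ and to $\sigma(\C B_{1/2})$, where $B_{1/2}$ is the surface of multidegree $(1,1,1)$ obtained by halving the exponents. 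In any case the target value $-18$ must come out; note $-18 = \tfrac{18}{8}\cdot 8$ matches the asymptotic slope $-\tfrac{18}{8}m^3$ at $m=2$, which is a useful consistency check and a strong hint as to which sign distribution to choose.

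The main obstacle will be choosing the sign distribution on the $8$ cube vertices so that the resulting $\chi(\R B)$ is as negative as $-18$ rather than some milder value: the number of sign changes (equivalently, the number of sign-changing edges, hence the combinatorial complexity of $\Gamma$) must be maximized, and one must verify that the particular triangulation of the cube into tetrahedra is compatible with that choice in the sense that many tetrahedra actually meet $\Gamma$ in a quadrilateral rather than a triangle (quadrilateral pieces contribute differently to $\chi$). I would proceed by trying the "checkerboard" sign pattern on the cube vertices (opposite vertices same sign), which is the natural candidate for maximizing sign changes, then carefully tally the face numbers of $\widetilde{\Gamma}$; if that does not give $-18$ I would test the few remaining sign patterns up to symmetry. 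A secondary, more routine obstacle is bookkeeping the identifications on $\partial\widetilde{(2\Pi)^*}$ correctly for the $(\P^1)^3$ ambient space (where the compactification identifies opposite faces of the cube rather than antipodal boundary points of a simplex), but this only affects lower-order boundary terms and, since we want an exact value at $m=2$, must be handled with care.
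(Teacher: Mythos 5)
Your overall strategy is the same as the paper's: take a triangulation of $[0,2]^3$ using only its eight vertices (so that it is the dilation by $2$ of a triangulation of the unit cube), choose signs at the eight vertices, and compute $\chi(\R B)$ by directly counting the cells of the $PL$-surface, weighting a cell by $8$, $4$ or $2$ according to whether the simplex carrying it lies in the interior of the cube, in the relative interior of a $2$-face, or on an edge. However, the proposal stops short of the essential content of the lemma: no sign distribution is actually verified, and the candidate you put forward first --- the checkerboard distribution, chosen to maximize the number of sign-changing edges --- does not work. With checkerboard signs every cube edge changes sign, but the effect is to cut off each vertex of one parity class by a small triangle, and after symmetrization and the identifications for $(\P^1)^3$ the real part is a disjoint union of four spheres: a direct count (for either the $5$- or the $6$-tetrahedron decomposition) gives $\chi = +8$, not $-18$. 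Maximizing sign changes pushes $\chi$ up (many small spheres), whereas $\chi=-18$ forces a connected genus-$10$ real part, so the heuristic guiding your search points in the wrong direction. Your fallback of testing the remaining sign patterns up to symmetry would eventually succeed, since the search is finite, but as written the proof is incomplete exactly where the work lies.

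For comparison, the paper uses a specific $6$-tetrahedron triangulation of $[0,2]^3$ and the asymmetric distribution with $-$ at $(2,0,0)$, $(0,2,0)$, $(2,2,2)$ and $+$ at the other five vertices; then every tetrahedron has a $2$--$2$ sign split, and the weighted count of mixed tetrahedra, triangles and segments gives $8\times(6-6) + 4\times(-12+3) + 2\times 9 = -18$. Two smaller corrections to your discussion: a quadrilateral piece and a triangular piece contribute identically (one $2$-cell each) to $\chi$; what changes between sign patterns is the number of mixed triangles and segments, not the shape of the pieces. And the boundary terms are not ``lower-order'' here --- at $m=2$ they are the entire nonzero contribution to the count above --- so, as you rightly suspected, they must be handled exactly, with the $(\P^1)^3$ identifications giving the weights $4$ and $2$.
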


\begin{proof}
    Consider the triangulation of the polytope in $\R^3$ with vertices $(0,0,0)$, $(2,0,0)$, $(0,2,0)$, $(2,2,0)$, $(0,0,2)$, $(2,0,2)$, $(0,2,2)$, $(2,2,2)$ given by the tetrahedra with vertices
    \begin{itemize}
        \item $(0,0,0)$, $(2,0,0)$, $(0,2,0)$, $(0,0,2)$;
        \item $(2,0,0)$, $(2,0,2)$, $(0,0,2)$, $(2,2,2)$;
        \item $(0,2,0)$, $(0,2,2)$, $(0,0,2)$, $(2,2,2)$;
        \item $(0,0,2)$, $(2,2,0)$, $(2,0,0)$, $(0,2,0)$;
        \item $(0,0,2)$, $(2,2,0)$, $(2,0,0)$, $(2,2,2)$
        \item $(0,0,2)$, $(2,2,0)$, $(0,2,0)$, $(2,2,2)$.
    \end{itemize}
    The vertices $(2,0,0)$, $(0,2,0)$ and $(2,2,2)$ receive the sign $-$, while the other receive the sign $+$.
    This triangaulation contains
    \begin{itemize}
        \item $6$ tetrahedra with vertices of opposite signs;
        \item $18$ triangles with vertices of opposite signs, among which $12$ lie on a $2$-face of the polytope;
        \item $12$ segments with vertices of opposite signs, among which $3$ lies on a $2$-face of the polytope and $9$ lie on a $1$-face of the polytope.
    \end{itemize}
    Hence the Euler characteristic of the patchworked surface is $8\times(6-6) + 4\times(-12+3) + 2 \times 9 = -18$.
\end{proof}

\begin{rmk}
    In this case, $-18$ is the lowest value allowed by the Comessatti inequalities (see \textit{e.g.} \cite{kharlamov}). The classification of real $K3$ surfaces (see \textit{e.g.} \cite{real_enriques}) shows that $\R B$ is actually homeomorphic to a connected orientable surface of genus $10$.
\end{rmk}

We can now prove Proposition \ref{prop_big_b1}.

\begin{proof}
    The desired triangulation is simply a gluing of some copies of the image under the reflections with respect to all coordinate hyperplane of the triangulated cube described in the proof of \ref{lem_big_b1}.

    Choose a positive integer $k$ and consider the polytope $\Pi_{k,1,1}$ with vertices $(0,0,0)$, $(2k,0,0)$, $(0,2,0)$, $(2k,2,0)$, $(0,0,2)$, $(2k,0,2)$, $(0,2,2)$, $(2k,2,2)$. We denote by $x$, $y$, $z$ the standard coordinates in $\R^3$. Subdivide $\Pi_{k,1,1}$ by the hyperplanes with equation $x = 2i$ for any integer $i \in \{1,...,k-1 \}$. 

    For any even integer $i \in \{0,...,k-1 \}$, consider the signed triangulation of the cube with vertices $(2i,0,0)$, $(2i,2,0)$, $(2i+2,0,0)$, $(2i+2,2,0)$, $(2i,0,2)$, $(2i,2,2)$, $(2i+2,0,2)$, $(2i+2,2,2)$ obtained by translating the signed triangulation described in the proof of \ref{lem_big_b1} along the vector $(2i,0,0)$.

    For any odd integer $i \in \{0,...,k-1 \}$, consider the signed triangulation of the cube with vertices $(2i,0,0)$, $(2i,2,0)$, $(2i+2,0,0)$, $(2i+2,2,0)$, $(2i,0,2)$, $(2i,2,2)$, $(2i+2,0,2)$, $(2i+2,2,2)$ obtained by applying the reflection with respect to the plane $\{ x=0 \}$ and then translating along the vector $(2i+2,0,0)$ the signed triangulation described in the proof of Lemma \ref{lem_big_b1}.

    The real part of a resulting multidegree $(2k,2,2)$ patchworked surface is obtained from $k$ copies of the real part of the surface obtained in Lemma \ref{lem_big_b1} that are cut and glued along the disjoint union of some smooth closed curves. Hence, the Euler characteristic of the real part of the surface is simply $-18k$.

    In a similar fashion, we can use the signed triangulation of $\Pi_{k,1,1}$ we described to construct a signed triangulation of $\Pi_{k,k,1}$ such that the Euler characteristic of the real part of an associated patchworked surface is equal to $-18k^2$. This signed triangulation can in turn be used to construct a signed triangulation of $\Pi_{k,k,k}$ such that the Euler characteristic of the real part of an associated patchworked surface is equal to $-18k^3$.
\end{proof}




\newpage
\bibliographystyle{alpha}
\bibliography{biblio}

\end{document}